\newtheorem{theorem}{Theorem}[section]
\newtheorem{lemma}{Lemma}[section]
\newtheorem{assumption}{Assumption}[section]
\theoremstyle{definition}
\newtheorem{remark}{Remark}[section]
\newtheorem{example}{Example}[section]
\newcommand{\R}{\mathbb{R}}
\newcommand{\mF}{\mathcal{F}}
\newcommand{\mG}{\mathcal{G}}
\newcommand{\mC}{\mathcal{C}}
\newcommand{\mD}{\mathcal{D}}
\newcommand{\mS}{\mathcal{S}}
\newcommand{\U}{U}
\newcommand{\Ep}{\mathrm{E}}
\renewcommand{\Pr}{\mathrm{P}}
\renewcommand{\tilde}{\widetilde}
\renewcommand{\hat}{\widehat}
\DeclareMathOperator{\Var}{Var}
\DeclareMathOperator{\Cov}{Cov}
\DeclareMathOperator{\E}{E}
\DeclareMathOperator{\Prob}{P}
\begin{document}

\title[Confidence bands for EIV regression]{Uniform confidence bands for nonparametric errors-in-variables regression}

\author[K. Kato]{Kengo Kato}
\author[Y. Sasaki]{Yuya Sasaki}

\date{First arXiv version: February 11, 2017. This version: \today}

\address[K. Kato]{
Department of Statistics and Data Science, Cornell University \\
1194 Comstock Hall, Ithaca, NY 14853, USA.
}
\email{kk976@cornell.edu}

\address[Y. Sasaki]{
Department of Economics, Vanderbilt University \\
VU Station B, Box \#351819, 2301 Vanderbilt Place, Nashville, TN 37235, USA. 
}
\email{yuya.sasaki@vanderbilt.edu}

\begin{abstract}
This paper develops a method to construct uniform confidence bands for a nonparametric regression function where a predictor variable is subject to a measurement error. We allow for the distribution of the measurement error to be unknown, but assume the availability of validation data or repeated measurements on the latent predictor variable. The proposed confidence band builds on the deconvolution kernel estimation and a novel application of the multiplier bootstrap method. We establish asymptotic validity of the proposed confidence band. To our knowledge, this is the first paper to derive asymptotically valid uniform confidence bands for nonparametric errors-in-variables regression. 
\end{abstract}
\keywords{confidence band, deconvolution, errors-in-variables regression, multiplier bootstrap.\\${}$\quad\ \textit{JEL Code:} C14}

\maketitle

\section{Introduction}

Consider the nonparametric errors-in-variables (EIV) regression model with classical measurement error
\begin{equation}
Y=g(X) +\epsilon, \ W=X+\U, \ \Ep[\epsilon \mid X,\U] = 0, \label{eq: EIV}
\end{equation}
where each of $Y,X,\epsilon,W$, and $\U$ is a univariate random variable, and $\U$ is independent from $X$.
We observe $(Y,W)$, but observe neither $X$ nor $\U$.\footnote{Although we state the independence and mean independence restrictions as a concise sufficient condition for the identification, we remark that they may be relaxed as in \cite{Sc13}. Specifically, it suffices to assume that $\Ep[e^{it(X+U)}]=\Ep[e^{itX}]\Ep[e^{itU}]$ and $\Ep[g(X)e^{it(X+U)}]=\Ep[g(X)e^{itX}]\Ep[e^{itU}]$ for all $t \in \mathbb{R}$.}
Furthermore, we assume that the distribution of $\U$ is unknown. The variable $X$ is a latent predictor variable, while $\U$ is a measurement error. 
Of interest are estimation of and inference on the regression function $g(x)=\E[Y \mid X=x]$.  
In particular, we are interested in constructing uniform confidence bands for $g$.
Confidence bands provide a simple graphical description of the extent to which a nonparametric estimator varies at design points, thereby quantifying uncertainties of the nonparametric estimator. 
However, construction of confidence bands tends to be challenging, especially for complex nonparametric models (we refer to \cite{Wa06,HaHo13,GiNi16} as general references on confidence bands in nonparametric statistical models). 

Indeed, despite the rich literature on consistent estimation of nonparametric EIV regression, the literature on pointwise or uniform confidence bands for nonparametric EIV regression is  limited -- see below for a literature review -- likely because of its complexity. 
Even pointwise inference on $g$ under the assumption that the measurement error distribution is known is considered by experts to be difficult.\footnote{\citet{DeHaJa15}, who study pointwise confidence bands for nonparametric EIV regression under the assumption that the measurement error distribution is known, state that ``despite their practical importance, to our knowledge confidence bands in nonparametric EIV regression have largely been ignored so far. We show that the problem is particularly complex, much more so than in the standard error-free setting.'' \citep[][p.149]{DeHaJa15}} This is because, as discussed in \cite{DeHaJa15}: 1) the asymptotic variance of the ``deconvolution kernel'' estimator of $g$ is non-trivial to estimate and so inference based on limiting distributions is difficult to implement; and 2) it is not straightforward to devise a way to implement bootstrap for inference on $g$ due to the unobservability of $X$ in data.

With all these challenges recognized in the literature, the present paper attempts to solve an even more challenging problem of constructing \textit{uniform} confidence bands for the regression function $g$ \textit{without} assuming that the measurement error distribution is known. 
To deal with unknown measurement error distribution, we assume that there is an independent sample $\{ \eta_{1},\dots,\eta_{m} \}$ from the measurement error distribution, in addition to an independent sample $\{ (Y_{1},W_{1}),\dots,(Y_{n},W_{n}) \}$ from the distribution of $(Y,W)$,  where $m=m_{n} \to \infty$ as $n \to \infty$. (The auxiliary sample $\{ \eta_{1},\dots,\eta_{m} \}$ need not be independent from $\{ (Y_{1},W_{1}),\dots,(Y_{n},W_{n}) \}$.) 
Real data scenarios of such additional data availability that are plausible in economics, social sciences, and biomedical sciences include:
the case where validation data is available for data combination; and
the case where repeated measurements on $X$ with errors one of which is symmetrically distributed are available.
These patterns of data requirements are often considered in the existing literature with measurement errors that we review below.

Under this setup, we develop a method to construct confidence bands for the regression function $g$. Our method builds on the deconvolution kernel estimation \citep{FaTr93}, and a novel application of the multiplier (or wild) bootstrap method. 
Bootstrap methods for confidence bands for nonparametric functions have been established previously -- see, for example, \cite{HaHo13,ChChKa14b} that also use Gausssian process approximations.
Our construction of the multiplier process differs from the standard approach in the error-free case \citep[cf.][]{NePo98}, and is tailored to EIV regression; see Remark \ref{rem: multiplier process} ahead.
Building on non-trivial applications of the probabilistic techniques developed in \cite{ChChKa14a,ChChKa14b,ChChKa16}, we establish asymptotic validity of the proposed confidence band, i.e., the proposed confidence band contains the true regression function with probability approaching the nominal coverage probability.  In the present paper, as in \cite{BiDuHoMu07, ScMuDu13,DeHaJa15} that study inference in deconvolution and EIV regression, we focus for a technical reason on the case where the measurement error density is \textit{ordinary smooth}, i.e., the characteristic function of the measurement error distribution decays at most polynomially fast in the tail \citep[cf.][]{Fa91a,FaTr93}. 

In addition to these contributions, we also provide a practical guideline by adapting the regularization method of \cite{DeHaMe08} and the bandwidth selection procedures of \cite{DeHa08,DeHaJa15} to our framework.
We conduct simulation studies to verify the finite sample performance of the proposed confidence band. The simulation studies show that the proposed confidence band, combined with the proposed bandwidth selection rule, works well. 
Applying our method to a combination of the two data sets, the National Health and Nutrition Examination Survey (NHANES) and the Panel Survey of Income Dynamics (PSID), we draw confidence bands for nonparametric regressions of medical costs on the body mass index (BMI), accounting for measurement errors in BMI.
Finally, we discuss extensions of our results to specification testing, cases with additional error-free regressors, and confidence bands for conditional distribution functions.

\subsection{Related Literature}

In order to locate the present paper in the context of the relevant literature, it is useful to first review measurement error models and deconvolution.
We refer to books by \cite{CaRuStCr06,Me09,Ho09} and surveys by \cite{ChHoNe11} and \cite{Sc16} for general references. 
The genesis of this literature features the deconvolution kernel density estimation with known error distributions \citep{CaHa88,StCa90,Fa91a,Fa91b}, followed by that with unknown error distributions \citep{DiHa93,HoMa96,Ne97,Ef97,LiVu98,DeHaMe08,Jo09,CoLa11}. \citet{DiHa93, Ne97,Ef97,Jo09,CoLa11} assume the availability of an auxiliary  sample from the measurement error distribution, while \citet{HoMa96,DeHaMe08} assume repeated measurements with symmetrically and identically distributed errors. For repeated measurements without symmetry of error distributions, \citet{LiVu98} propose an alternative density estimator based on Kotlarski's lemma \citep[cf.][]{Ko67} that does not require known error distribution; see also \cite{BoRo10,CoKa15} for further developments.
More recently, \citet{DeHa16}  propose a new approach of non-parametric deconvolution when the error distribution is unknown and there is no additional data available.
Methods to construct confidence bands in deconvolution are developed by \cite{BiDuHoMu07,BiHo08,EsGu08,LoNi11,ScMuDu13} for the case of known error distribution, and more recently by \cite{KaSa16} for the case of unknown error distribution. See also \cite{AdOtWh16} for uniform confidence bands for the distribution function in a deconvolution problem. 

Similarly to the density estimation, the literature on nonparametric EIV regression estimation often takes the deconvolution kernel approach.
\citet{FaTr93} propose to substitute the deconvolution kernel in the Nadaraya-Watson estimator -- also see \cite{FaMa92} for pointwise asymptotic normality, \cite{DeMe07} for extensions to heteroscedastic measurement errors, \cite{DeFaCa09} for local polynomial extensions, and \cite{DeHaJa15} for pointwise inference.
\citet{DeHaMe08} estimate the error characteristic function using repeated measurements on $X$ with symmetrically and identically distributed errors, and substitute the estimated error characteristic function into the deconvolution kernel. 
\citet{Li02} and \citet{Sc04} also work with cases with repeated measurements but without assuming symmetry of error distributions, and propose alternative approaches to estimate the regression function based on Kotlarski's lemma. See also \cite{CaMaRu99,ScWhCh12,ScHu13,HuSa15}. 

Our method of inference is based on the deconvolution kernel estimation.
We mainly focus on 
(i) the case where a sample drawn from the error distribution is available;
(ii) the case where validation data is available for data combination; and
(iii) the case where repeated measurements with errors one of which is symmetrically distributed are available.
For (ii) data combination with validation data, our model shares similarities albeit different assumptions to that of the nonparametric instrumental variables (NPIV) regression, for which \citet{HoLe12, ChCh15}, and \citet{Ba16} develop methods to construct confidence bands as we do for nonparametric EIV regression.

We note the following two references as particularly relevant benchmarks for identifying our contributions. 
One reference is \cite{Sc04} that derives pointwise asymptotic normality for the nonparametric EIV regression estimator different from ours, under unknown error distribution.
To this existing result, our contributions are four-fold.
First, we provide a method of uniform inference as opposed to a pointwise one.
Second, we propose a method of bandwidth selection for valid inference.
Third, while the existing result left aside the issue of variance estimation and thus are not readily applicable in practice, we provide a bootstrap method for ease of practical implementation.
Fourth, we devise lower-level assumptions which are easier to verify with concrete examples of distribution and conditional moment functions.
The other reference is \cite{DeHaJa15} that suggests a method of pointwise inference via bootstrap for nonparametric EIV regression with known error distribution.
To this existing result, our contributions are three-fold.
First, our method allows for unknown error distribution.
Second, we provide a method of uniform inference as opposed to a pointwise one.
Third, we provide formal theories to support the asymptotic validity of our bootstrap method. \citet{DeHaJa15} mention how to modify their methodology to the case where the measurement error distribution is unknown, and to construct uniform confidence bands. However, their theoretical results do not formally cover those cases. 

Finally, \citet{BiBiHo10} and \citet{PrBiDe15} obtain confidence bands for inverse regression with fixed equidistant designs (the fixed equidistant design assumption is substantial in their setups and analyses); the inverse regression is related to but different from our EIV regression, and our setup does not allow fixed equidistant designs because of measurement errors. The methodologies and the proof strategies are also different; e.g. both of those papers rely on Gumbel approximations for validity of the confidence bands, which we do not. 

Importantly, to the best of our knowledge, none of the existing results covers uniform confidence bands for EIV regression (\ref{eq: EIV}), even under the simpler setting that the measurement error distribution is known. The present paper fills this important void. 

\subsection{Notations and Organization}
For a non-empty set $T$ and a (complex-valued) function $f$ on $T$, we use the notation $\| f \|_{T} = \sup_{t \in T} |f(t)|$. Let $\ell^{\infty}(T)$ denote the Banach space of all bounded real-valued functions on $T$ with norm $\| \cdot \|_{T}$.
The convolution of $f_X$ and $f_U$ is denoted and defined by $(f_X \ast f_U)(y) = \int_{\mathbb{R}} f_X(x) f_U(y-x) dx$.
The Fourier transform of an integrable function $f$ on $\R$ is defined by $\varphi_{f}(t) = \int_{\R} e^{itx} f(x) dx, \ t \in \R$,
where $i=\sqrt{-1}$ denotes the imaginary unit throughout the paper. We refer to \cite{Fo99} as a basic reference on Fourier analysis. 
For any positive sequences $a_{n}$ and $b_{n}$, we write $a_{n} \sim b_{n}$ if $a_{n}/b_{n}$ is bounded and bounded away from zero. 
For any $a,b \in \R$, let $a \wedge b = \min \{ a,b \}$ and $a \vee b = \max \{ a,b \}$. 
For $a \in \R, b > 0$, we use the shorthand notation $[a \pm b] = [a-b,a+b]$. 
Let $\stackrel{d}{=}$ denote the equality in distribution. 

The rest of the paper is organized as follows. In Section \ref{sec: methodology}, we informally present our methodology to construct uniform confidence bands for $g$. In Section \ref{sec: main results}, we present asymptotic validity of the proposed confidence band under suitable regularity conditions. In Section \ref{sec: practical considerations}, we propose a practical method to choose the bandwidth. In Section \ref{sec: simulation}, we conduct simulation studies to verify the finite sample performance of the proposed confidence band. In Section \ref{sec: real data analysis}, we apply the proposed method to a combination of two empirical data sets. In Section \ref{sec: extensions}, we discuss extensions of our results  to specification testing of the conditional mean function,  cases with additional regressors without measurement errors, and construction of confidence bands for the conditional distribution function. 
Section \ref{sec: conclusion} concludes. 
Appendix contains all the proofs, additional simulation results, and additional details of real data analysis.


\section{Methodology}
\label{sec: methodology}

In this section, we informally present our methodology to construct confidence bands for $g$. The formal analysis of our confidence bands will be carried out in the next section. We will also discuss some examples of situations where an auxiliary sample from the measurement error distribution is available. 

\subsection{Deconvolution kernel estimation}
\label{sec: deconvolution kernel estimation}

We first introduce a deconvolution kernel method to estimate $f_{X}$ and $g$ under the assumption that the distribution of $\U$ is known. 
Let $\{ (Y_{1},W_{1}),\dots,(Y_{n},W_{n}) \}$ be an independent sample from the distribution of $(Y,W)$.
Throughout the paper, we assume that the densities of $X$ and $\U$ exist and are denoted by $f_{X}$ and $f_{\U}$, respectively. Let $\varphi_{W}, \varphi_{X}$, and $\varphi_{\U}$ denote the characteristic functions of $W,X$, and $\U$, respectively. 
Let $K: \R \to \R$ be a kernel function such that $K$ is integrable on $\R$, $\int_{\R} K(x) dx=1$, and its Fourier transform $\varphi_{K}$ is supported in $[-1,1]$ (i.e., $\varphi_{K}(t) = 0$ for all $|t| > 1$). 

When $f_{\U}$ is known, the deconvolution kernel density estimator of $f_{X}$ reads
\begin{equation}
\hat{f}_{X}^{*}(x)  =\frac{1}{2\pi} \int_{\R} e^{-itx} \hat{\varphi}_{W}(t) \frac{\varphi_{K}(th_{n})}{\varphi_{\U}(t)} dt = \frac{1}{nh_{n}} \sum_{j=1}^{n} K_{n}((x-W_{j})/h_{n}),
\label{eq: second expression}
\end{equation}
where the function $K_{n}$, called the \textit{deconvolution kernel}, is defined by 
\[
K_{n}(x) = \frac{1}{2\pi} \int_{\R} e^{-itx} \frac{\varphi_{K}(t)}{\varphi_{\U}(t/h_{n})} dt.
\]
See \cite{CaHa88,StCa90,Fa91a,Fa91b} for asymptotic properties.


Analogously, \citet{FaTr93} propose to estimate the regression function $g(x)$ by $\hat{g}^{*}(x) = \hat{\mu}^{*}(x)/\hat{f}_{X}^{*}(x)$, where
\begin{align*}
\hat{\mu}^{*}(x)&=\frac{1}{2\pi} \int_{\R} e^{-itx} \left (n^{-1} {\textstyle \sum}_{j=1}^{n} Y_{j} e^{itW_{j}} \right ) \frac{\varphi_{K}(th_{n})}{\varphi_{\U}(t)} dt \\
&= \frac{1}{nh_{n}} \sum_{j=1}^{n}Y_{j}K_{n}((x-W_{j})/h_{n}).
\end{align*}
For asymptotic properties, see \cite{FaTr93,FaMa92}, among others.\footnote{It is worth pointing out that estimation of $f_{X}$ and $gf_{X}$ corresponds to solving certain Fredholm integral equations of the first kind, and therefore estimation of $f_{X}$ and $gf_{X}$ (or $g$) is a statistical ill-posed inverse problem. 
In fact, $f_{X}$ and $gf_{X}$ satisfy $f_{X} * f_{\U} = f_{W}$ and $(gf_{X})*f_{\U} =\Ep[Y \mid W=\cdot \,]f_{W}$; these are Fredholm integral equations of the first kind where the right hand side functions are directly estimable (see, e.g., \cite{Ca08} and \cite{Ho09} for an overview of statistical ill-posed inverse problems).}

The discussion so far has presumed that the distribution of $\U$ is known. 
In the present paper, we consider the case of unknown distribution of $\U$, and assume that there is an independent sample $\{ \eta_{1},\dots,\eta_{m} \}$ from the distribution of $\U$, i.e.,
$
\eta_{1},\dots,\eta_{m} \sim f_{\U} \ \text{i.i.d.}
$
where $m=m_{n} \to \infty$ as $n \to \infty$. We do not assume that $\eta_{1},\dots,\eta_{m}$ are independent from $\{ (Y_{1},W_{1}),\dots,(Y_{n},W_{n}) \}$. In Section \ref{section examples}, we will discuss examples where such an auxiliary sample from the measurement error distribution is available. 
Given $\{ \eta_{1},\dots,\eta_{m} \}$, we may estimate $\varphi_{\U}$ by the empirical characteristic function, namely, 
\[
\hat{\varphi}_{\U}(t) = \frac{1}{m} \sum_{j=1}^{m} e^{it\eta_{j}},
\]
and estimate the deconvolution kernel $K_{n}$ by the plug-in method: 
\[
\hat{K}_{n}(x) = \frac{1}{2\pi} \int_{\R} e^{-itx} \frac{\varphi_{K}(t)}{\hat{\varphi}_{\U}(t/h_{n})} dt.
\]
We can thus estimate $g(x)$ by $\hat{g}(x) = \hat{\mu}(x)/\hat{f}_{X}(x)$, where 
\[
\hat{\mu}(x)  = \frac{1}{nh_{n}} \sum_{j=1}^{n}Y_{j}\hat{K}_{n}((x-W_{j})/h_{n}), \ \  \hat{f}_{X}(x) = \frac{1}{nh_{n}} \sum_{j=1}^{n}\hat{K}_{n}((x-W_{j})/h_{n}).
\]
Density estimators of the form $\hat{f}_{X}$ are studied in \cite{DiHa93,Ne97,Ef97}, among others, and nonparametric regression estimators of the form $\hat{g}$ are studied in \cite{DeHaMe08}, among others. 

\subsection{Motivation for our approach}

In this section, we motivate our approach based on the multiplier bootstrap and Gaussian approximation.
To this end, we first introduce the multiplier bootstrap in a simple setting, and present intuition for how and why it works.
Suppose that an estimator $\hat{g}(x)$ of $g(x)$ has the influence function representation
$
\sqrt{n}\left(\hat{g}(x)-g(x)\right) \approx n^{-1/2} \sum_{j=1}^n \phi_j(x),
$
and the right-hand side is approximately normal $N(0,\sigma^2(x))$.
In this simple setting, we can construct a confidence interval for $g(x)$ by
$
\left[\hat g(x) \pm \hat{\sigma}(x) \Phi^{-1}\left((1-\tau)/2\right) \right],
$
where $\Phi$ denotes the distribution function of $N(0,1)$, and $\hat{\sigma}(x)$ is an estimator of $\sigma(x)$.
The multiplier bootstrap can be used to obtain an estimator of the approximating Gaussian distribution $N(0,\sigma^2(x))$.
Specifically, generate multiplier variables $\xi_1, \dots, \xi_n \sim N(0,1)$ i.i.d. independently of the data, and consider
$
Z_n^\xi(x) =n^{-1/2}\sum_{j=1}^n \xi_j \phi_j(x).
$
One can generate multiplier variables many times to approximate the distribution of $Z_n^\xi(x)$ conditionally on the data, which estimates the $N(0,\sigma^{2}(x))$ distribution.
This method works intuitively because $n^{-1/2} \sum_{j=1}^n \xi_j \phi_j(x)$, as a sum of normal random variables, is normal conditionally on $\phi_1(x),\dots,\phi_n(x)$ with variance approximately $\sigma^2(x)$.

The above idea of the multiplier bootstrap extends to estimating the distribution of
$
\| \sqrt{n}\left(\hat{g}(\cdot)-g(\cdot) \right) \|_{I}
$
for a compact interval $I$ of $\mathbb{R}$, where $\|f\|_{I} = \sup_{x \in I} \left|f(x)\right|$.
Suppose that
$
\sqrt{n}\left(\hat{g}(x)-g(x)\right) \approx n^{-1/2} \sum_{j=1}^n \phi_j(x)
$
for all $x \in I$, and the process 
$
I \ni x \mapsto n^{-1/2} \sum_{j=1}^n \phi_j(x)
$
is approximately Gaussian.
One can generate $\xi_1,\dots,\xi_n \sim N(0,1)$ many times to approximate the distribution of $\| Z_n^\xi \|_{I}$ conditionally on the data, which can be used as an estimator for the distribution of 
$
\| \sqrt{n}\left(\hat{g}(\cdot)-g(\cdot)\right) \|_{I}.
$
Particularly, the $(1-\tau)$-th quantile of $\| Z_n^\xi(\cdot) \|_{I}$ conditionally on  the data, denoted by $\widehat c_n$, can be used to form the uniform confidence band of $g$ on $I$ as
$
\left[\hat g(x) \pm \widehat c_n \right], x \in I.
$\footnote{
In the current simplified setup for a concise motivation, the width of the band is constant, $\widehat c_n$, on $I$.
We later extend this framework to self-normalized processes, and thence we obtain non-constant widths of the band on $I$.
}
We shall recommend and thus present this multiplier bootstrap approach based on the Gaussian approximation, rather than more traditional methods, for the following two reasons.
First, the multiplier bootstrap can easily approximate $\widehat c_n$ by simulating standard normal multipliers, whereas one would otherwise need to compute the covariance function of 
$
I \ni x \mapsto n^{-1/2} \sum_{j=1}^n \phi_j(x),
$
which is an infinite-dimensional object, and compute the distribution of the corresponding Gaussian supremum, which is not easy. 
Second, the traditional approach to construct confidence bands is based on the Gumbel limit distribution for example \citep[cf.][]{BiDuHoMu07}.
Obtaining the Gumbel limit distribution requires additional regularity assumptions than our approach, as discussed in Section \ref{sec: main results} in more detail.
Furthermore, the convergence to this limit distribution occurs only at a slow $\log$ rate \cite{Ha91}.
For these reasons, we recommend our approach based on the multiplier bootstrap and Gaussian approximation, and present it in the following section.

\subsection{Construction of confidence bands}
\label{sec: construction of confidence bands}

We now describe our method to construct confidence bands for $g$ based on the estimator $\hat{g}$. 
Under the regularity conditions stated below, we will show that $\hat{g}(x)-g(x)$ can be approximated by 
$\frac{1}{f_{X}(x)nh_{n}} \sum_{j=1}^{n} [ \{ Y_{j}-g(x) \} K_{n}((x-W_{j})/h_{n}) - A_{n}(x) ]$
uniformly in $x \in I$, where $I$ is a compact interval in $\R$ on which $f_{X}$ is bounded away from zero, and $A_{n}(x)= \Ep[\{ Y-g(x) \} K_{n}((x-W)/h_{n})]$. Let 
\[
s_{n}^{2}(x) = \Var \left ( \{ Y-g(x) \} K_{n}((x-W)/h_{n}) \right ),
\]
and consider the process
\begin{equation}
\mathsf{Z}_{n}^{*}(x) = \frac{1}{s_{n}(x)\sqrt{n}} \sum_{j=1}^{n}[ \{ Y_{j}-g(x) \} K_{n}((x-W_{j})/h_{n})) -A_{n}(x) ], \ x \in I,
\label{eq: Zstar}
\end{equation}
where $s_{n}(x)=\sqrt{s_{n}^{2}(x)}$. Note that under the regularity conditions stated below, $\inf_{x \in I} s_{n}(x) > 0$ for sufficiently large $n$, so that $\mathsf{Z}_{n}^{*}$ is well-defined.
Furthermore, we will show that there exists a tight Gaussian process $\mathsf{Z}_{n}^{G}$ in $\ell^{\infty}(I)$ with mean zero and the same covariance function as $\mathsf{Z}_{n}^{*}$, and such that as $n \to \infty$, $\sup_{z \in \R} \left | \Pr \{ \| \mathsf{Z}_{n}^{*} \|_{I} \leq z \} - \Pr \{ \| \mathsf{Z}_{n}^{G} \|_{I} \leq z \} \right | \to 0$.
Recall that $\| f(x) \|_{I} = \sup_{x \in I} | f(x) |$ for a  function $f$ on $I$, and $\ell^{\infty}(I)$ is the Banach space of bounded real functions on $I$ equipped with norm $\| \cdot \|_{I}$. This  in turn yields that 
\[
\sup_{z \in \R} | \Pr  \{ \| \hat{\mathsf{Z}}_{n} \|_{I} \leq z \} - \Pr \{ \| \mathsf{Z}_{n}^{G} \|_{I} \leq z  \}  | \to 0,
\]
where $\{ \hat{\mathsf{Z}}_{n}(x) : x \in I \}$ is a process defined by\footnote{At first glance, this equation appears to suggest that the rate of convergence is $1/\sqrt{n}h_n$, but the this is not the case -- see discussions in Section \ref{sec: main results}. This equation entailing the scale of $\sqrt{n} h_n$ is due to our definition of $Z_n^\ast$ for convenience of our proofs.}
\begin{equation}
\hat{\mathsf{Z}}_{n} (x) =f_{X}(x) \sqrt{n}h_{n} (\hat{g}(x)-g(x))/s_{n}(x), \ x \in I.
\label{eq: deviation process}
\end{equation}
Therefore, if we denote by $c_{n}^{G}(1-\tau)$ the $(1-\tau)$-quantile of $\| \mathsf{Z}_{n}^{G} \|_{I}$
for $\tau \in (0,1)$, then a band of the form 
\[
\hat{\mathcal{C}}_{1-\tau}^{*}(x) = \left [ \hat{g}(x) \pm \frac{s_{n}(x)}{f_{X}(x)\sqrt{n} h_{n}} c_{n}^{G}(1-\tau) \right ], \ x \in I
\]
will contain $g(x), x \in I$ with probability at least $1-\tau+o(1)$ as $n \to \infty$. In fact, it holds that
$\Pr\{ g(x) \in \hat{\mathcal{C}}_{1-\tau}^{*}(x) \ \forall x \in I \} = \Pr \{ \| \hat{\mathsf{Z}}_{n} \|_{I} \leq c_{n}^{G}(1-\tau)  \} = \Pr \{ \| \mathsf{Z}_{n}^{G} \|_{I} \leq c_{n}^{G}(1-\tau)  \} + o(1) \geq 1-\tau+o(1)$.

In practice, $f_{X}(x),s_{n}^{2}(x)$, and $c_{n}^{G}(1-\tau)$ are all unknown, and we have to estimate them. We estimate $f_{X}(x)$ and $s_{n}^{2}(x)$ by $\hat{f}_{X}(x)$ and
\[
\hat{s}_{n}^{2}(x) = \frac{1}{n} \sum_{j=1}^{n} \{ Y_{j} - \hat{g}(x) \}^{2}\hat{K}_{n}^{2}((x-W_{j})/h_{n}),
\]
respectively. Note that $(\Ep[ A_{n}(x) ])^{2}$ is negligible relative to $s_{n}^{2}(x)$ so that we have ignored $(\Ep[ A_{n}(x) ])^{2}$ in estimation of $s_{n}^{2}(x)$. Note also that $\sum_{j=1}^{n} (Y_{j}-\hat{g}(x)) \hat{K}_{n}((x-W_{j})/h_{n})) = 0$.

Next, we estimate  the quantile $c_{n}^{G}(1-\tau)$ by the Gaussian multiplier bootstrap. Generate $\xi_{1},\dots,\xi_{n} \sim N(0,1) \ \text{i.i.d.}$,
independently of the data $\mathcal{D}_{n} = \{ Y_{1},\dots,Y_{n},W_{1},\dots,W_{n},\eta_{1},\dots,\eta_{m} \}$, and consider 
\begin{equation}
\hat{\mathsf{Z}}_{n}^{\xi} (x) = \frac{1}{\hat{s}_{n}(x)\sqrt{n}} \sum_{j=1}^{n} \xi_{j}  \{ Y_{j}-\hat{g}(x) \} \hat{K}_{n}((x-W_{j})/h_{n}),
\label{eq: multiplier process}
\end{equation}
where $\hat{s}_{n}(x) = \sqrt{\hat{s}_{n}^{2}(x)}$. Note that under the regularity conditions stated below, $\inf_{x \in I} \hat{s}_{n}(x) > 0$ with probability approaching one. 
Conditionally on the data $\mathcal{D}_{n}$, $\hat{\mathsf{Z}}_{n}^{\xi}$ is a Gaussian process with mean zero and covariance function (presumably) ``close'' to or ``estimates'' that of $\mathsf{Z}_{n}^{*}$. 
 Hence, we estimate $c_{n}^{G}(1-\tau)$ by $\hat{c}_{n}(1-\tau)$, defined as the conditional $(1-\tau)$-quantile of $\| \hat{\mathsf{Z}}_{n}^{\xi} \|_{I}$ given $\mathcal{D}_{n}$,
which can be computed via simulations.
Now, the resulting confidence band is defined by 
\begin{equation}
\hat{\mathcal{C}}_{1-\tau}(x) = \left [ \hat{g}(x) \pm \frac{\hat{s}_{n}(x)}{\hat{f}_{X}(x)\sqrt{n} h_{n}} \hat{c}_{n}(1-\tau) \right ], \ x \in I.
\label{eq: proposed band}
\end{equation}
Note that, except for the choice of the bandwidth, this confidence band is completely data-driven. We will discuss practical choice of the bandwidth in Section \ref{sec: practical considerations}. 

\begin{remark}
\label{rem: multiplier process}
In the error-free case where we observe $\{ (Y_{j},X_{j}) \}_{j=1}^{n}$, the deviation of a traditional kernel regression estimator with kernel $K$ from its mean is approximated by $\frac{1}{f_{X}(x) nh_{n} }\sum_{j=1}^{n} \epsilon_{j} K((x-X_{j})/h_{n})$ under suitable regularity conditions. 
On the other hand, in the EIV case, the deviation of $\hat{g}(x)$ from its mean is approximated by 
$\frac{1}{f_{X}(x)nh_{n}} \sum_{j=1}^{n} [ \{ Y_{j}-g(x) \} K_{n}((x-W_{j})/h_{n})]$.
Note that $\epsilon_{j} = Y_{j} - g(X_{j})$ is used in the former error-free case, whereas $Y_{j} - g(x)$ is used in the latter EIV case.
Since the conditional variance of $\epsilon_{j} = (Y_{j} - g(x)) - (g(x) - g(X_j))$ given $W_j=x$ is not necessarily the same as the the conditional variance of $Y_{j} - g(x)$ given $W_j=x$, it is essential to use $Y_{j}-g(x)$ in the EIV case rather than $\epsilon_{j}$ (even if $\epsilon_{j}$ or its estimate were available).
\end{remark}

\begin{remark}[Empirical bootstrap]
Instead of the multiplier bootstrap, one can use (a version of) the empirical bootstrap to estimate the quantile $c_{n}^{G}(1-\tau)$, as in \cite{AdOtWh16} (note: \cite{AdOtWh16} consider inference on the marginal distribution function of $X$ and not consider infererence on $g(x)$). Namely, let $(Y_{1}^{*},W_{1}^{*}),\dots,(Y_{n}^{*},W_{n}^{*})$ be i.i.d. draws from the empirical distribution of $\{ (Y_{1},W_{1}),\dots,(Y_{n},W_{n}) \}$, and consider the bootstrap process
\[
\hat{\mathsf{Z}}_{n}^{EB}(x) = \frac{1}{\hat{s}_{n}(x)\sqrt{n}} \sum_{i=1}^{n} \{ Y_{j}^{*} - \hat{g}(x) \} \hat{K}_{n}((x-W_{j}^{*})/h_{n}), \ x \in I. 
\]
We note that the expectation of $\{ Y_{j}^{*} - \hat{g}(x) \} \hat{K}_{n}((x-W_{j}^{*})/h_{n})$ with respect to the bootstrap distribution is $n^{-1} \sum_{j'=1}^{n} \{ Y_{j'} -  \hat{g}(x) \} \hat{K}_{n}((x-W_{j'})/h_{n}) = 0$ (by the definition of $\hat{g}(x)$), and so conditionally the right hand side is the sum of independent random variables with mean zero. 
Then we can estimate $c_{n}^{G}(1-\tau)$ by the conditional $(1-\tau)$-quantile of $\| \hat{\mathsf{Z}}_{n}^{EB} \|_{I}$ given $\mathcal{D}_{n}$, and so the resulting confidence band is 
\begin{equation}
\hat{\mathcal{C}}^{EB}_{1-\tau}(x) = \left [ \hat{g}(x) \pm \frac{\hat{s}_{n}(x)}{\hat{f}_{X}(x)\sqrt{n} h_{n}} \hat{c}_{n}^{EB}(1-\tau) \right ], \ x \in I,
\label{eq: EB band}
\end{equation}
which we call the empirical bootstrap confidence band. This empirical bootstrap confidence band is also asymptotically valid under the same conditions as the multiplier bootstrap confidence band. See Appendix \ref{sec: validity of MB} for details. 
\end{remark}

\subsection{Examples}\label{section examples}

In this section, we present a couple of examples where an auxiliary sample from the measurement error distribution is available.

\begin{example}[Repeated measurements, \cite{CaRuStCr06}, p.298]
\label{ex: panel}
Suppose that we observe repeated measurements $(W^{(1)},W^{(2)})$ on $X$ with measurement errors $(U^{(1)},U^{(2)})$, where $W^{(k)} = X + \U^{(k)}$ for $k=1,2$, $X$ and $(\U^{(1)},\U^{(2)})$ are independent, and the conditional distribution of $\U^{(2)}$ given $\U^{(1)}$ is symmetric. Note that the unobserved regressor $X$ is common between $k=1,2$. The distribution of $\U^{(1)}$ need not be symmetric (in particular, the distributions of $\U^{(1)}$ and $\U^{(2)}$ may be different), and independence between $\U^{(1)}$ and $\U^{(2)}$ is  not necessary. If we define $W=(W^{(1)}+W^{(2)})/2, \U = (\U^{(1)}+\U^{(2)})/2$, and $\eta = (W^{(1)}-W^{(2)})/2 = (\U^{(1)}-\U^{(2)})/2$, then we have that $W = X+\U$ and $\U \stackrel{d}{=} \eta$,
where $\eta$ is observable. 

Note that the constructed sample $(\eta_1,\dots,\eta_n) = \left((W^{(1)}_1-W^{(2)}_1)/2,\dots,(W^{(1)}_n-W^{(2)}_n)/2\right)$ behaves as an auxiliary sample of size $m=n$ due to $U \stackrel{d}{=} \eta$.
In this way, we transform the repeated measurement framework into the auxiliary sample framework, and therefore our proposed method outlined in Sections \ref{sec: deconvolution kernel estimation}--\ref{sec: construction of confidence bands} for the case of an auxiliary sample directly applies to the case of repeated measurements.
This approach differs from the way in which some of preceding papers use repeated measurements \citep[e.g.,][]{DeHaMe08}.
One implication of this difference is that we can allow for a different set of assumption from that of \citet{DeHaMe08}.
First, they require conditions on the relative smoothness of $g$ with respect to the measurement error density, while we do not.
Second, we only require the conditional distribution of $\U^{(2)}$ given $\U^{(1)}$ to be symmetric, while they require both $\U^{(1)}$ and $\U^{(2)}$ to be symmetrically distributed.

For a repeated measurements setup, \cite{Sc04} proposes an alternative estimator of $g$ based on Kotlarski's lemma which does not require the symmetry assumption. The form of Schennach's estimator is more complex than ours, and to the best of our knowledge, there is no existing result on asymptotically valid uniform confidence bands for Schennach's estimator. 

It is worth noting that while Schennach's approach can drop the symmetry assumption, it requires another technical assumption that the characteristic function $\varphi_{X}(t) = \Ep[e^{itX}]$ of $X$ does not vanish on the entire real line $\R$. Both \cite{Sc04} and we (and in fact most of papers on deconvolution and EIV regression) assume that the characteristic functions of the error variables do not vanish on $\R$, but our approach does allow $\varphi_{X}$ to take zeros. The assumption that $\varphi_{X}$ does not vanish on $\R$ is not innocuous; it is non-trivial to find densities that are compactly supported and have non-vanishing characteristic functions (though these properties are not mutually exclusive; see, e.g., \cite{Sc16}, Footnote 4), and the assumption excludes densities convolved with distributions whose characteristic functions take zeros, and so on (e.g., convolutions of $k$ uniform densities on $[a,b]$ are piecewise polynomials with degrees $k-1$, and convex combinations of such piecewise polynomials form a rich family of densities, but their characteristic functions take zeros). 
So, we believe that Schennach's approach and ours are complementary to each other. 
\end{example}

\begin{example}[Data combination]
Suppose that we have access to data on $(Y,W)$ and $(W,X)$, separately, but do not have access to data on $(Y,X)$.
In this case, we can construct a sample $\{ Y_1,\dots,Y_n,W_1,\dots,W_n,\eta_1,\dots,\eta_m \}$ from a sample of size $n$ on $(Y,W)$ and a sample of size $m$ on $(W,X)$.

This case is often faced by empirical researchers, and various techniques are proposed to combine the two separate samples -- see a survey by \cite{RiMo07}.
To fix ideas, consider the demand model $Y= g(X) + \epsilon$, where $Y$ denotes the quantity purchased of a product and $X$ denotes the logarithm of its price.
Marketing scientists and economists often use Nielsen Homescan data for quantities and prices to analyze this demand model, but the home-scanned prices in this data are subject to imputation errors $\U = W - X$.
To overcome this issue, \cite{EiLeNe10} collect data on $(W,X)$ from a large grocery retailer by matching transaction prices $X$ that were recorded by the retailer (at the store) to the prices $W$ recorded by the Homescan panelists.
Together with Nielsen Homescan data on $(Y,W)$, \citeauthor{EiLeNe10} suggest to combine the two separate data sets to analyze the demand model.

In the literature, validation data are used as a way to relax the classical measurement error assumption that $X$ and $\U$ are independent; see, e.g., \cite{ChHoTa05}. While they allow for non-classical measurement errors, \cite{ChHoTa05} focus on the case where the parameter of interest is finite dimensional. 

It is worth noting that, when validation data on $(X,W)$ are available, the problem of estimation of $g$ can be considered as a nonparametric instrumental variable (NPIV) problem treating $X$ as an endogenous variable and $W$ as an instrumental variable \citep[see,e.g.,][for NPIV models]{NePo03,HaHo05,BlChKr07,ChRe11,Ho11}. In fact, observe that $\Ep[ Y \mid W ] = \Ep[ g(X) \mid W]$. For NPIV models, \cite{HoLe12} and the more recent paper by \cite{ChCh15} develop methods to construct confidence bands for the structural function using series methods, although these papers do not formally consider cases where samples on $(Y,W)$ and $(X,W)$ are different (\cite{Ba16} also develop methods to construct confidence bands for Tikhonov regularized estimators in NPIV models, but his confidence bands are asymptotically conservative in the sense that the coverage probabilities are in general strictly larger than the nominal level even asymptotically). 
 However, we would like to point out that there are difference in underlying assumptions between series estimation of NPIV models and deconvolution kernel estimation in EIV regression. For example, in series estimation of NPIV models, it is often assumed that the distribution of $W$ is compactly supported and the density of $W$ is bounded away from zero on its support \cite[cf.][]{BlChKr07,ChCh15}. 
On the other hand, in EIV regression, it is commonly assumed that the characteristic function of the measurement error $\U$ is non-vanishing on $\R$ (which leads to identification of the function $g$), and in many cases the measurement error $\U$ then has unbounded support, which in turn implies that $W$ has unbounded support. 
Further, while both NPIV and EIV regressions are statistical ill-posed inverse problems, the ways in which the ``ill-posedness'' is defined are different; in series estimation of NPIV models, the ill-posedness is defined for given basis functions, while in EIV regression, the ill-posedness is defined via how fast the characteristic function of the measurement error distribution decays. 
Finally, the EIV approach captures all the restrictions implied by the classical measurement error assumption and therefore enjoys more efficiency, while the NPIV approach requires less restrictive assumptions on the measurement error. 
For these differences and tradeoffs, we believe that our inference results cover different situations from those developed in the NPIV literature. 
\end{example}

\section{Main results}
\label{sec: main results}

In this section, we study asymptotic validity of the proposed confidence band (\ref{eq: proposed band}).
To this end, we make the following assumption. 
For any given constants $\beta, B > 0$, let $\Sigma (\beta,B)$ denote the class of functions $f: \R \to \R$ such that $f$ is $k$-times differentiable and
\[
|f^{(k)}(x)-f^{(k)}(y)| \leq B|x-y|^{\beta-k}, \ \forall x,y \in \R,
\]
where $k$ is the integer such that $k < \beta \leq k+1$, and $f^{(k)}$ denotes the $k$-th derivative of $f$ ($f^{(0)}=f$). Let $I$ be a compact interval in $\R$.

\begin{assumption}
\label{as: mean}
We assume the following conditions. 
\begin{enumerate}
\item[(i)] $\Ep[Y^{4}] < \infty$, the function $w \mapsto \Ep[Y^{2} \mid W=w]f_{W}(w)$ is bounded and continuous, and for each $\ell=1,2$, the function $w \mapsto \Ep[ |Y|^{2+\ell} \mid W=w] f_{W}(w)$ is bounded.  
\item[(ii)] The functions $\varphi_{X}(t) = \Ep[ e^{itX}]$ and $\psi_{X}(t) = \Ep[ g(X) e^{itX} ]$ for $t \in \R$ are integrable on $\R$.
\item[(iii)] The measurement error $\U$ has finite mean, $\Ep[| \U |]<\infty$, and its characteristic function,  $\varphi_{\U}(t) = \Ep[e^{it\U}], t \in \R$, does not vanish on $\R$. Furthermore, 
there exist constants $C_{1} > 1$ and $\alpha > 0$ such that $C_{1}^{-1} |t|^{-\alpha} \leq | \varphi_{\U} (t) | \leq C_{1} |t|^{-\alpha}$ and $| \varphi_{\U}'(t) | \leq C_{1}|t|^{-\alpha-1}$ for all $|t| \geq 1$. 
\item[(iv)] The functions $f_{X}$ and $gf_{X}$  belong to $\Sigma (\beta,B)$ for some $\beta > 1/2$ and $B > 0$. Let $k$ denote the integer such that $k <\beta\leq k+1$. 
\item[(v)] Let $K$ be a real-valued integrable function (kernel) on $\R$, not necessarily non-negative, such that  $\int_{\R} K(x) dx = 1$, and its Fourier transform $\varphi_{K}$ is continuously differentiable and supported in $[-1,1]$. Furthermore, the function $K$ is a $(k+1)$-th order kernel in the sense that $\int_{\R} |x|^{k+1} |K(x)| dx < \infty$ and $\int_{\R} x^{\ell} K(x) dx = 0$ for $\ell=1,\dots,k$
\item[(vi)] For all $x \in I$, $f_{X}(x) > 0$ and $\Ep[ \{ Y-g(x) \}^{2} \mid W=x] f_{W}(x) > 0$.
\item[(vii)] As $n \to \infty$, 
\begin{equation}
\frac{(\log (1/h_{n}))^{2}}{(n \wedge m)h_{n}^{2\alpha+2}}   \to 0, \ \frac{nh_{n} \log (1/h_{n})}{m} \to 0,  \ h_{n}^{\alpha+\beta} \sqrt{nh_{n} \log (1/h_{n})} \to 0. 
\label{eq: bandwidth}
\end{equation}
\end{enumerate}
\end{assumption}

Condition (i) is a moment condition on  $Y$, which we believe is not restrictive. Note that, for each $\ell=0,1,2$, if $\Ep[ |Y|^{2+\ell} \mid X,\U] = \Ep[ |Y|^{2+\ell} \mid X]$, then 
by comparing the Fourier transforms of both sides, we arrive at the identity
\[
\Ep[|Y|^{2+\ell} \mid W=w]f_{W}(w) =\left ( (\Upsilon_{\ell} f_{X})*f_{\U} \right )(w),
\]
where $\Upsilon_{\ell} (x) = \Ep[ |Y|^{2+\ell} \mid X=x]$ (cf. the proof of Lemma \ref{lem: moment bound} Case (ii)), and the right hand side is bounded and continuous if $\Upsilon_{\ell} f_{X}$ is bounded (recall that if functions $f_{1}$ and $f_{2}$ on $\R$ are integrable and bounded, respectively, then their convolution $f_{1} * f_{2}$ is bounded and continuous). Note that this does not require $\Upsilon_{\ell}$ to be bounded globally. For Condition (ii), we first note that  $\psi_{X}$ is the Fourier transform of $gf_{X}$ (which is integrable by $\Ep[|Y|]<\infty$). Condition (ii) implies that 
\[
\text{$f_{X}$ and $gf_{X}$ are bounded (and continuous)},
\]
which in turn implies that 
\[
\text{$f_{W} (w)= \int_{\R} f_{X}(w-x) f_{\U}(x)dx$ is bounded and continuous}.
\]
Condition (ii) is satisfied if, e.g., $f_{X}$ and $gf_{X}$ are twice continuously differentiable with integrable derivatives up to the second order; in fact, under such conditions, $|\varphi_{X}(t)| = o(|t|^{-2})$ and $|\psi_{X}(t)| =o(|t|^{-2})$ as $|t| \to \infty$. However, differentiability of $f_{X}$ and $gf_{X}$ is not strictly necessary for Condition (ii) to hold; e.g., a Laplace density is not differentiable but its Fourier transform is  integrable.

Condition (iii) is concerned with the characteristic function of the measurement error. Note that finiteness of the first moment of $\U$ ensures that $\varphi_{\U}$ is continuously differentiable.  In the present paper, as in \cite{BiDuHoMu07}, \cite{ScMuDu13}, and \cite{DeHaJa15}, we assume that the measurement error density is \textit{ordinary smooth}, namely, $|\varphi_{\U}(t)|$ decays at most polynomially fast as $|t| \to \infty$ \citep[cf.][]{Fa91a}.
Informally, the smoother $f_{\U}$ is, the faster $|\varphi_{\U} (t)|$ decays as $|t| \to \infty$, so Condition (iii) restricts smoothness of $f_{\U}$. 
Laplace and Gamma distributions, together with their convolutions, (suitable) mixtures, and symmetrizations,
are typical examples of distributions satisfying Condition (iii), but normal and Cauchy distributions do not satisfy Condition (iii). Normal and Cauchy densities are examples of \textit{supersmooth} densities, i.e., their characteristic functions decay exponentially fast as $|t| \to \infty$ (but mixtures of ordinary- and supersmooth densities are ordinary smooth).
Furthermore, Theorem 1 in \cite{HuRi10} provides sufficient conditions in terms of $f_{U}$ under which the characteristic function of $U$ decays at most polynomially fast as $|t| \to \infty$. 
Specifically, if $f_{U}$ is range restricted of order $k$ in the sense that $f_{U}$ is supported on $[\underline{u},\overline{u}]$ with either $\underline{u}$ or $\overline{u}$ finite, and has $k+2$ for $k \ge 0$ absolutely integrable derivatives $f^{(j)}_{U}$ with $|f^{(k)}_{U}(\underline{u})| \ne |f^{(k)}_{U}(\overline{u})|$ and $f^{(j-1)}_{U}(\underline{u}) = f^{(j-1)}_{U}(\overline{u})$ for $j=1,\dots,k$, then $|\varphi_{U}(t)|$ decays like $|t|^{-(k+1)}$ in the tail \citep[][Theorem 1]{HuRi10}. 

Condition (iv) is concerned with smoothness of the functions $f_{X}$ and $g$.
Condition (v) is about a kernel function. 
Condition (vi) ensures that $\inf_{x \in I} f_{X}(x) > 0$ (since $f_{X}$ is continuous) and $\inf_{x \in I}\Ep[ \{ Y-g(x) \}^{2} \mid W=x] f_{W}(x) > 0$ (see the proof of Lemma \ref{lem: moment bound}-(ii)). 
Note, that since $gf_{X}$ is bounded, we have that 
\[
\| g \|_{I} \leq \| gf_{X} \|_{I}/\inf_{x \in I} f_{X}(x) < \infty.
\]

It is worth mentioning that under these conditions, we have that 
\[
s_{n}^{2}(x) = \Var ( \{ Y - g(x) \} K_{n}((x-W)/h_{n}) ) \sim h_{n}^{-2\alpha+1}
\]
uniformly in $x \in I$ (see Lemma \ref{lem: moment bound}), and the right hand side is larger by factor $h_{n}^{-2\alpha}$ than the corresponding term  in the error-free case (recall that, in standard kernel regression without measurement errors, the variance of $\epsilon K((x-X)/h_{n})$ is $\sim h_{n}$). 
This results in slower rates of convergence of kernel regression estimators in presence of measurement errors than those in the error-free case, and the value of $\alpha$ is a key parameter that controls the difficulty of estimating $g$. Namely, the larger the value of $\alpha$ is, the more difficult estimation of $g$ will be. In other words, the value of $\alpha$ quantifies the degree of ill-posedness of estimation of $g$. 

Condition (vii) restricts the bandwidth $h_{n}$ and the sample size $m$ from the measurement error distribution. 
The second condition in (\ref{eq: bandwidth}) restricts the growth rates of $m$, and is crucial for ensuring the negligibility of the effects of estimating the error characteristic function on the Gaussian approximation, which in turn establishes the bootstrap validity.
This condition allows $m$ to be of smaller order than $n$, which in particular covers the repeated measurement setup  discussed in Example \ref{ex: panel}. 
The last condition in (\ref{eq: bandwidth}) means that we are choosing undersmoothing bandwidths, that is, choosing bandwidths that are of smaller order than optimal rates for estimation of $g$ under the $\sup$-norm loss. 
Inspection of the proof of Theorem \ref{thm: Gaussian approximation} shows that without the last condition in (\ref{eq: bandwidth}), we have that
\[
\| \hat{g} - g \|_{I}  = O_{\Pr} \{ h_{n}^{-\alpha}(nh_{n})^{-1/2}\sqrt{\log (1/h_{n})}\} + O(h_{n}^{\beta}),
\]
where the $O(h_{n}^{\beta})$ term comes from the deterministic bias. So, choosing $h_{n} \sim (n/\log n)^{-1/(2\alpha+2\beta+1)}$ optimizes the rate on the right hand side, and the resulting rate of convergence of $\| \hat{g} - g \|_{I}$ is $O_{\Pr} \{ (n/\log n)^{-\beta/(2\alpha+2\beta+1)} \}$.
The last condition in (\ref{eq: bandwidth}) requires to choose $h_{n}$ of smaller order than $(n/\log n)^{-1/(2\alpha+2\beta+1)}$ (by $\log n$ factors), so that the ``variance'' term dominates the bias term. See Remark \ref{rem: bias} for further discussions on the bias. For Condition (vii) to be non-void, we require $\beta > 1/2$.


We first state a theorem that establishes that, under Assumption \ref{as: mean},  the distribution of $\| \hat{\mathsf{Z}}_{n} \|_{I} = \sup_{x \in I} | \hat{\mathsf{Z}}_{n} (x)|$, where $\{ \hat{\mathsf{Z}}_{n}(x) : x \in I \}$ is defined in (\ref{eq: deviation process}),  can be approximated by that of the supremum of a certain Gaussian process, which is a building block for proving validity of the proposed confidence band. Recall that a Gaussian process $\{ Z(x) : x \in I \}$ indexed by $I$ is a tight random variable in $\ell^{\infty}(I)$ if and only if $I$ is totally bounded
for the intrinsic $L^{2}$ pseudo-metric $\rho_{2} (x,y) = \sqrt{\Ep[ \{ Z(x)-Z(y) \}^{2}]}$ for $x,y \in I$, and $Z$ has sample
paths almost surely uniformly $\rho_{2}$-continuous; see \citet[][p.41]{vaWe96}. 
Recall the process $\mathsf{Z}_{n}^{*}$ defined in (\ref{eq: Zstar}). 

\begin{theorem}[Gaussian approximation]
\label{thm: Gaussian approximation}
Under Assumption \ref{as: mean}, for each sufficiently large $n$, there exists a tight Gaussian process  $\mathsf{Z}_{n}^{G}$ in $\ell^{\infty}(I)$ with mean zero and the same covariance function as $\mathsf{Z}_{n}^{*}$, and such that as $n \to \infty$, 
\begin{equation}
\sup_{z \in \R} \left | \Pr \left \{ \| \hat{\mathsf{Z}}_{n} \|_{I} \leq z \right \} - \Pr \left \{ \| \mathsf{Z}_{n}^{G} \|_{I} \leq z \right \} \right | \to 0.
\label{eq: Gaussian approximation}
\end{equation}
\end{theorem}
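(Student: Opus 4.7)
The plan is to show that the deviation process $\hat Z_n$ is uniformly close on $I$ to the centered i.i.d. empirical process $Z_n^*$, and then combine a Gaussian approximation theorem for suprema of empirical processes with an anti-concentration inequality for Gaussian suprema, both in the spirit of \cite{ChChKa14a,ChChKa14b,ChChKa16}.

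The first stage is a linearization of $\hat g(x) - g(x) = [\hat\mu(x) - g(x)\hat f_X(x)]/\hat f_X(x)$. A uniform consistency result $\|\hat f_X - f_X\|_I = o_{\Pr}(1)$, obtainable from a maximal inequality applied to the deconvolution density estimator together with the $(k+1)$-st order kernel bias, allows replacement of $\hat f_X(x)$ by $f_X(x)$ in the denominator up to a factor $1+o_{\Pr}(1)$. The hard part, and what I view as the main obstacle, is then replacing $\hat K_n$ by $K_n$ in the numerator $(nh_n)^{-1}\sum_j (Y_j - g(x))\hat K_n((x-W_j)/h_n)$. Using the Fourier representation
\[
\hat K_n(u) - K_n(u) = \frac{1}{2\pi}\int_{-1}^{1} e^{-itu}\varphi_K(t)\frac{\varphi_\varepsilon(t/h_n) - \hat\varphi_\varepsilon(t/h_n)}{\hat\varphi_\varepsilon(t/h_n)\,\varphi_\varepsilon(t/h_n)}\,dt
\]
and exchanging integration and summation, the contribution of $\hat K_n - K_n$ to the numerator becomes a Fourier integral against $V_n(x,t) := n^{-1}\sum_j (Y_j - g(x))e^{itW_j/h_n}$. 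Coupling the uniform bound $\sup_{x \in I,\,|t|\leq 1}|V_n(x,t)| = O_{\Pr}(1)$, the lower bound $|\varphi_\varepsilon(t/h_n)| \gtrsim h_n^\alpha|t|^{-\alpha}$ from Assumption \ref{as: mean}(iii), and the uniform Hoeffding-plus-covering concentration bound $\|\hat\varphi_\varepsilon - \varphi_\varepsilon\|_{[-1/h_n,1/h_n]} = O_{\Pr}(\sqrt{\log(1/h_n)/m})$, I would show that this remainder, scaled by $f_X(x)\sqrt{n}h_n/s_n(x) \sim \sqrt{nh_n^{2\alpha+1}}$, is $o_{\Pr}((\log(1/h_n))^{-1/2})$ uniformly on $I$, with the rate conditions supplied by the first two conditions in (\ref{eq: bandwidth}).

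After this replacement, the numerator becomes $(f_X(x)nh_n)^{-1}\sum_j (Y_j - g(x)) K_n((x-W_j)/h_n)$. Centering each summand by $A_n(x)$ produces $s_n(x)Z_n^*(x)/(f_X(x)\sqrt{n}h_n) + A_n(x)/(f_X(x)h_n)$, so that $\hat Z_n(x) = Z_n^*(x) + \sqrt{n}A_n(x)/s_n(x) + R_n(x)$ with $\|R_n\|_I = o_{\Pr}((\log(1/h_n))^{-1/2})$. Under $gf_X, f_X \in \Sigma(\beta,B)$ and the $(k+1)$-st order kernel property $\int x^\ell K(x)\,dx = 0$ for $\ell=1,\dots,k$, the deconvolution-kernel identity $\int K_n((x-w)/h_n)f_\varepsilon(w-z)\,dw = K((x-z)/h_n)$ reduces $A_n(x)$ to an ordinary-kernel bias integral against the H\"older function $gf_X$, giving $\sup_{x \in I}|A_n(x)| = O(h_n^{\beta+1})$. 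Combined with $s_n(x) \sim h_n^{1/2-\alpha}$, the scaled bias is $O(h_n^{\alpha+\beta}\sqrt{nh_n})$, which is $o((\log(1/h_n))^{-1/2})$ by the third condition in (\ref{eq: bandwidth}).

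It remains to apply the Gaussian approximation to $Z_n^*(x) = n^{-1/2}\sum_j f_{n,x}(Y_j,W_j)$ where $f_{n,x}(y,w) = [(y-g(x))K_n((x-w)/h_n) - A_n(x)]/s_n(x)$. The class $\mathcal{F}_n = \{f_{n,x}:x \in I\}$ has envelope of order $h_n^{-\alpha-1/2}$ (from $\|K_n\|_\infty \lesssim h_n^{-\alpha}$ and $s_n \sim h_n^{1/2-\alpha}$) and VC-type covering numbers polynomial in $\varepsilon^{-1}$, inherited from the Lipschitz dependence of $x \mapsto K_n((x-w)/h_n)/s_n(x)$ with polynomial-in-$h_n^{-1}$ Lipschitz constant. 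Under the first condition in (\ref{eq: bandwidth}), the Gaussian approximation theorem of \cite{ChChKa14b,ChChKa16} for suprema of empirical processes over VC-type classes then produces a tight Gaussian random variable $Z_n^G$ in $\ell^\infty(I)$ with the same covariance function as $Z_n^*$ and Kolmogorov distance $\sup_z |\Pr\{\|Z_n^*\|_I \leq z\} - \Pr\{\|Z_n^G\|_I \leq z\}| \to 0$. The anti-concentration inequality for Gaussian suprema of \cite{ChChKa14a}, which bounds the concentration function of $\|Z_n^G\|_I$ by $O(\sqrt{\log(1/h_n)})\cdot\varepsilon$, then absorbs both $R_n$ and the scaled bias at the scale $(\log(1/h_n))^{-1/2}$ and yields (\ref{eq: Gaussian approximation}).
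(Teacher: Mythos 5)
Your overall architecture --- linearize $\hat{g}-g$, reduce $\hat{Z}_{n}$ to the i.i.d.\ process $Z_{n}^{*}$, couple $\|Z_{n}^{*}\|_{I}$ to a Gaussian supremum via the Chernozhukov--Chetverikov--Kato theorems, and absorb the remainders and the $O(h_{n}^{\alpha+\beta}\sqrt{nh_{n}})$ scaled bias by Gaussian anti-concentration --- is the same as the paper's, and your bias computation via the identity $\int K_{n}((x-w)/h_{n})f_{\varepsilon}(w-z)\,dw=K((x-z)/h_{n})$ correctly reproduces $\|A_{n}\|_{I}=O(h_{n}^{\beta+1})$.

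The genuine gap is in the step replacing $\hat{K}_{n}$ by $K_{n}$. Bounding your Fourier integral in absolute value with $\sup_{x\in I,|t|\le 1}|V_{n}(x,t)|=O_{\Pr}(1)$, $|\hat{\varphi}_{\varepsilon}(t/h_{n})\varphi_{\varepsilon}(t/h_{n})|^{-1}=O_{\Pr}(h_{n}^{-2\alpha})$, and the sup-norm rate for $\hat{\varphi}_{\varepsilon}-\varphi_{\varepsilon}$ yields a contribution to the numerator of order $h_{n}^{-2\alpha-1}m^{-1/2}\sqrt{\log(1/h_{n})}$, hence of order $\{nh_{n}(\log(1/h_{n}))^{2}/(mh_{n}^{2\alpha+2})\}^{1/2}$ after multiplying by $\sqrt{n}h_{n}/s_{n}(x)\sim\sqrt{nh_{n}^{2\alpha+1}}$. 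This does \emph{not} vanish under (\ref{eq: bandwidth}): with $m\sim n$ and $h_{n}$ near the rate-optimal choice it behaves like $h_{n}^{-\alpha-1/2}\log(1/h_{n})\to\infty$; the first two conditions in (\ref{eq: bandwidth}) control $(\log(1/h_{n}))^{2}/\{(n\wedge m)h_{n}^{2\alpha+2}\}$ and $nh_{n}\log(1/h_{n})/m$, whose product is smaller than your required quantity by a factor of order $(n\wedge m)/\log(1/h_{n})$. The missing idea is a cancellation: decompose $V_{n}(x,t)$ into its mean and its fluctuation. The mean $\Ep[(Y-g(x))e^{itW/h_{n}}]=\{\psi_{X}(t/h_{n})-g(x)\varphi_{X}(t/h_{n})\}\varphi_{\varepsilon}(t/h_{n})$ carries a factor $\varphi_{\varepsilon}(t/h_{n})$ that cancels one of the two inverse characteristic functions in your integrand, so the dominant term, linear in $\hat{\varphi}_{\varepsilon}-\varphi_{\varepsilon}$, becomes an integral of $\varphi_{\varepsilon}/\hat{\varphi}_{\varepsilon}-1$ against the \emph{integrable} function $\psi_{X}-g(x)\varphi_{X}$ and is only $O_{\Pr}(h_{n}^{-\alpha}m^{-1/2})$, which is $o_{\Pr}(r_{n})$ exactly under the second condition in (\ref{eq: bandwidth}). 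The cross term (product of the two estimation errors $\hat{\psi}_{YW}-\psi_{YW}$ and $\hat{\varphi}_{\varepsilon}-\varphi_{\varepsilon}$) and the contribution from $\{t:\psi_{X}(t)=0\}$, where the cancellation is unavailable but $\psi_{YW}$ itself vanishes, must then be handled by Cauchy--Schwarz in $t$ using the $L^{2}$ bound $\int_{-h_{n}^{-1}}^{h_{n}^{-1}}|\hat{\varphi}_{\varepsilon}(t)-\varphi_{\varepsilon}(t)|^{2}dt=O_{\Pr}\{(mh_{n})^{-1}\}$ rather than the sup-norm bound. Without this decomposition your stated rate conditions cannot deliver $\|R_{n}\|_{I}=o_{\Pr}\{(\log(1/h_{n}))^{-1/2}\}$.
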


Theorem \ref{thm: Gaussian approximation} derives an ``intermediate'' Gaussian approximation to the  process $\hat{\mathsf{Z}}_{n}$, in the sense that the distribution of the approximating Gaussian process $\mathsf{Z}_{n}^{G}$ depends on the sample size $n$. It could be possible to further show that, if $I$ is not a singleton, under additional conditions, for some sequences $a_{n} > 0$ and $b_{n} \in \R$, $a_{n}(\| \hat{\mathsf{Z}}_{n}^{G} \|_{I}-b_{n})$ converges in distribution to a Gumbel distribution. However, while it is mathematically intriguing, we avoid using the Gumbel approximation, since 1) the Gumbel approximation is slow and the coverage error of the resulting confidence band is of order $1/\log n$ \citep[see][]{Ha91}, and 2) deriving the Gumbel approximation would require additional restrictive conditions on the measurement error distribution. For example, in a problem constructing confidence bands in deconvolution with known error distribution, \citet{BiDuHoMu07} derive a Gumbel approximation to the supremum deviation of the deconvolution kernel density estimator, thereby establishing a Smirnov-Bickel-Rosenblatt type theorem \citep{Sm50,BiRo73} for the deconvolution kernel density estimator. But to do so, they require more restrictive conditions on the measurement error distribution than those in the present paper (see their Assumption 2). 

The following theorem shows asymptotic validity of the proposed  confidence band. 
\begin{theorem}[Validity of multiplier bootstrap confidence band]
\label{thm: validity of MB}
Under Assumption \ref{as: mean}, as $n \to \infty$, 
\begin{equation}
\sup_{z \in \R}  \left | \Pr \left \{ \| \hat{\mathsf{Z}}^{\xi}_{n} \|_{I} \leq z \mid \mathcal{D}_{n}  \right \} - \Pr \left  \{ \| \mathsf{Z}_{n}^{G} \|_{I} \leq z \right \} \right | \stackrel{\Pr}{\to} 0, \label{eq: validity of MB}
\end{equation}
where $\mathsf{Z}_{n}^{G}$ is the Gaussian process in $\ell^{\infty}(I)$ given in Theorem \ref{thm: Gaussian approximation}. Therefore, for the confidence band $\hat{\mathcal{C}}_{1-\tau}$ defined in (\ref{eq: proposed band}),  we have as $n \to \infty$, 
\begin{equation}
\Pr \left \{ g(x) \in \hat{\mathcal{C}}_{1-\tau}(x) \ \forall x \in I \right \} = 1-\tau+o(1).
\label{eq: validity of MB 2}
\end{equation}
Finally, the supremum width of the band $\hat{\mathcal{C}}_{1-\tau}$ is 
\[
O_{\Pr}\{ h_{n}^{-\alpha} (nh_{n})^{-1/2} \sqrt{\log (1/h_{n})} \}.
\]
\end{theorem}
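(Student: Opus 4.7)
The plan is to prove \eqref{eq: validity of MB} by a two-step comparison — first between the feasible bootstrap process $\hat{Z}_n^\xi$ and an oracle bootstrap process $\tilde{Z}_n^\xi$ built from the population quantities $K_n, g, s_n$, then between $\tilde{Z}_n^\xi$ and $Z_n^G$ — and then to deduce \eqref{eq: validity of MB 2} from \eqref{eq: validity of MB}, Theorem \ref{thm: Gaussian approximation}, and Gaussian anti-concentration. The supremum-width bound will follow from a maximal inequality for $\|Z_n^G\|_I$ combined with the order estimate $s_n(x) \sim h_n^{-\alpha+1/2}$, uniform in $x \in I$.

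For the oracle step, introduce $\tilde{Z}_n^\xi(x) = \{s_n(x)\sqrt{n}\}^{-1}\sum_{j=1}^n \xi_j\{Y_j - g(x)\}K_n((x-W_j)/h_n)$, which, conditionally on the data, is a centered Gaussian process with covariance $\tilde{\Sigma}_n(x,x') = n^{-1}\sum_{j=1}^n f_{n,x}(Y_j,W_j)f_{n,x'}(Y_j,W_j)$, while $Z_n^G$ has covariance $\Sigma_n(x,x') = \E[f_{n,x}(Y,W)f_{n,x'}(Y,W)] - \E[f_{n,x}(Y,W)]\E[f_{n,x'}(Y,W)]$. By the Gaussian comparison and anti-concentration inequalities of \cite{ChChKa14a,ChChKa14b}, the bound $\sup_z |\Pr\{\|\tilde{Z}_n^\xi\|_I \le z \mid \mathcal{D}_n\} - \Pr\{\|Z_n^G\|_I \le z\}| \stackrel{\Pr}{\to} 0$ follows from $\|\tilde{\Sigma}_n - \Sigma_n\|_{I\times I} = o_{\Pr}(1/\log(1/h_n))$; a covering argument on $I$ exploiting Lipschitz continuity of $y \mapsto K_n((\cdot - y)/h_n)$ together with the moment bounds of Lemma \ref{lem: moment bound} delivers this rate under Condition (vii). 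To pass from $\tilde{Z}_n^\xi$ to $\hat{Z}_n^\xi$, the crucial estimate is a uniform bound on $\hat{K}_n - K_n$: from the identity $\hat{K}_n(x) - K_n(x) = (2\pi)^{-1}\int_\R e^{-itx}\varphi_K(t)\{\hat{\varphi}_\varepsilon(t/h_n)^{-1} - \varphi_\varepsilon(t/h_n)^{-1}\}dt$ combined with $\sup_{|t|\le h_n^{-1}}|\hat{\varphi}_\varepsilon(t) - \varphi_\varepsilon(t)| = O_{\Pr}(\sqrt{\log(1/h_n)/m})$ and the polynomial lower bound on $|\varphi_\varepsilon|$ from (iii), the substitution errors arising from replacing $K_n, g, s_n$ by their estimates are $o_{\Pr}(1/\log(1/h_n))$ in the sup norm of the normalized multiplier process, thanks precisely to $nh_n\log(1/h_n)/m \to 0$ in (vii).

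For \eqref{eq: validity of MB 2}, the event $\{g(x) \in \hat{\mathcal{C}}_{1-\tau}(x)\ \forall x \in I\}$ coincides with $\{\sup_{x\in I}\hat{f}_X(x)\sqrt{n}h_n|\hat{g}(x) - g(x)|/\hat{s}_n(x) \le \hat{c}_n(1-\tau)\}$. Using $\hat{f}_X/f_X = 1 + o_{\Pr}(1)$ and $\hat{s}_n/s_n = 1 + o_{\Pr}(1)$ uniformly on $I$ (both resting on the $\hat{K}_n - K_n$ bound and on the uniform consistency of $\hat{g}$ established alongside Theorem \ref{thm: Gaussian approximation}), this event differs from $\{\|\hat{Z}_n\|_I \le \hat{c}_n(1-\tau)\}$ only by a multiplicative threshold perturbation of size $o_{\Pr}(1/\sqrt{\log(1/h_n)})$, which Gaussian anti-concentration absorbs; combined with Theorem \ref{thm: Gaussian approximation} and \eqref{eq: validity of MB}, this yields the limit $1 - \tau$.

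For the width, $s_n(x) \sim h_n^{-\alpha+1/2}$ uniformly on $I$ (Lemma \ref{lem: moment bound}), so $\hat{s}_n(x)/\{\hat{f}_X(x)\sqrt{n}h_n\} = O_{\Pr}(h_n^{-\alpha}(nh_n)^{-1/2})$ uniformly on $I$. Dudley's entropy bound applied to $Z_n^G$ on $I$, whose intrinsic metric has $\epsilon$-covering numbers of order $O(1/\epsilon)$ by Lipschitz continuity of $K_n$, gives $\E\|Z_n^G\|_I = O(\sqrt{\log(1/h_n)})$; Borell's inequality yields $\|Z_n^G\|_I = O_{\Pr}(\sqrt{\log(1/h_n)})$, and \eqref{eq: validity of MB} transfers the tail control to $\hat{c}_n(1-\tau) = O_{\Pr}(\sqrt{\log(1/h_n)})$. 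Multiplying produces the claimed $O_{\Pr}(h_n^{-\alpha}(nh_n)^{-1/2}\sqrt{\log(1/h_n)})$ width. The main obstacle is the second step of the comparison: producing sup-norm bounds on $\hat{K}_n - K_n$ and the induced bootstrap covariance perturbation sharp enough to remain negligible after amplification by the $h_n^{-\alpha}$ ill-posedness factor and the $\log(1/h_n)$ factor demanded by anti-concentration; the bandwidth/sample-size condition $nh_n\log(1/h_n)/m \to 0$ in (vii) is tailored for exactly this substitution.
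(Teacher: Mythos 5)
Your proposal is correct in its overall architecture and matches the paper's proof in most respects: both proceed by (a) comparing the feasible multiplier process to an oracle multiplier process built from $K_n$, $g$, $s_n$, with the sup-norm bound on $\hat{K}_n-K_n$ (via Lemma \ref{lem: empirical chf} and the polynomial lower bound on $|\varphi_\varepsilon|$) as the crux of the substitution step; (b) using Gaussian anti-concentration (Lemma \ref{lem: anti-concentration}) to absorb all $o_{\Pr}\{(\log(1/h_n))^{-1/2}\}$ perturbations uniformly in $z$; (c) deducing coverage by combining \eqref{eq: validity of MB} with Theorem \ref{thm: Gaussian approximation} and the uniform consistency of $\hat{f}_X$ and $\hat{s}_n$; and (d) bounding the width via $\E\|Z_n^G\|_I = O(\sqrt{\log(1/h_n)})$ and the Borell--Sudakov--Tsirelson inequality. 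The one place where you take a genuinely different route is the oracle Gaussian approximation: the paper applies the packaged multiplier-bootstrap coupling theorem (Theorem 2.2 of \cite{ChChKa16}) to the class $\mF_n^{(2)}\cup(-\mF_n^{(2)})$, which directly produces a random variable distributed as $\|Z_n^G\|_I$ coupled to $\|Z_n^\xi\|_I$ at rate $o_{\Pr}\{(\log(1/h_n))^{-1/2}\}$; you instead exploit the fact that the multiplier process is \emph{exactly} Gaussian conditionally on $\mD_n$ and reduce the problem to closeness of the conditional and population covariance functions, followed by a Gaussian--Gaussian comparison and a discretization of $I$. Your route is conceptually cleaner (no coupling construction is needed since both processes are Gaussian), at the cost of having to carry out the discretization and continuity control explicitly; the paper's route outsources that work to the coupling theorem. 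One quantitative caveat: the comparison inequalities of \cite{ChChKa14b,ChChKa15} for maxima over $p$ discretization points yield a Kolmogorov-distance bound of order $\Delta^{1/3}(\log p)^{2/3}$, so the sufficient condition is $\|\tilde{\Sigma}_n-\Sigma_n\|_{I\times I}=o_{\Pr}\{(\log(1/h_n))^{-2}\}$ rather than the $o_{\Pr}\{(\log(1/h_n))^{-1}\}$ you state; this is harmless here because the empirical-process bound on the covariance discrepancy (envelope $\lesssim h_n^{-1}$, variance $\lesssim h_n^{-1}$) gives $O_{\Pr}\{(nh_n)^{-1/2}\sqrt{\log(1/h_n)}\}$, which is far smaller under Condition (vii), but the threshold should be stated correctly. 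Also note that your uncentered oracle process has conditional covariance $n^{-1}\sum_j f_{n,x}f_{n,x'}$ while $Z_n^G$ has the centered covariance $\E[f_{n,x}f_{n,x'}]-\E[f_{n,x}]\E[f_{n,x'}]$; the discrepancy $\E[f_{n,x}]\E[f_{n,x'}]=A_n(x)A_n(x')/\{s_n(x)s_n(x')\}=O(h_n^{2\alpha+2\beta+1})$ is negligible, but it should be accounted for in the covariance comparison.
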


\begin{remark}
Inspection of the proof shows that the result (\ref{eq: validity of MB 2}) holds even when $\tau = \tau_{n} \downarrow 0$ as $n \to \infty$. Furthermore, the supremum width of the band is 
$O_{\Pr}\{ h_{n}^{-\alpha} (nh_{n})^{-1/2} \sqrt{\log (1/h_{n}) \vee \log (1/\tau_{n})} \}$.
\end{remark}

\begin{remark}
For $v_{n} \sim (\log n)^{-1}$, take $h_{n} = v_{n} (n/\log n)^{-1/(2\alpha+2\beta+1)}$; then the supremum width of the band $\hat{\mC}_{1-\tau}$ is $(n/\log n)^{-\beta/(2\alpha+2\beta+1)}(\log n)^{\alpha+1/2}$. 
\end{remark}

\begin{remark}[Bias]
\label{rem: bias}
For any nonparametric inference problem, how to deal with the deterministic bias is a delicate and difficult problem. See Section 5.7 in \cite{Wa06} for related discussions. In the present paper, we employ undersmoothing bandwidths so that the bias is negligible relative to the ``variance'' part.
An alternative approach is to estimate the bias at each point, and construct a bias corrected confidence band. See, for example, \cite{EuSp93} and \cite{Xi98} for the error-free case (more recent discussions regarding the problem of bias in nonparametric inference problems include \cite{HaHo13}, \cite{ChChKa14b}, \cite{ArKo14}, \cite{CaCaFa15}, and \cite{Sc15}; these papers do not cover EIV regression). However, in EIV regression, estimation of the bias is not quite attractive for a couple of reasons. First, the bias consists of higher order derivatives of $g$ and $f_{X}$, and estimation of these higher order derivatives is difficult, especially in the EIV case. This is because estimation of $g$ and $f_{X}$ is an ill-posed inverse problem and rates of convergence of the derivative estimators of $g$ and $f_{X}$ are even slower than those in the error-free case. As such, using a bias-corrected estimator would suffer from slower convergence rates and more complicated inference results. Second, one of popular kernels used in EIV regression and deconvolution is a flat top kernel \citep{McPo04} which is an infinite order kernel, and if we use a flat top kernel, then the bias is  not calculated in a closed form (e.g., \citet{Sc04,BiDuHoMu07} use flat top kernels in their simulation studies). See Remark 1 in \cite{BiDuHoMu07} for a related issue in the deconvolution case. 
\end{remark}

\begin{remark}[supersmooth case]
\label{rem: super smooth}
In the present paper, we focus on the case where the measurement error density is ordinary smooth, similarly to \cite{BiDuHoMu07}, \cite{ScMuDu13}, and \cite{DeHaJa15} that study inference in deconvolution and nonparametric EIV regression. If the measurement error density is supersmooth, i.e., its characteristic function decays exponentially fast as $|t| \to \infty$, then 1) in view of the pointwise asymptotic normality result in \cite{FaMa92}, the asymptotic behavior of  the variance function $s_{n}^{2}(x)$ is much more complex. Indeed, as in \cite{KaSa16}, assume that the distribution of $U$ is supersmooth in the sense that 
$\varphi_{U}(t) \sim |t|^{\gamma_{0}}e^{-\nu |t|^{\gamma}}$ as $|t| \to \infty$ for some $\gamma > 1, \gamma_{0} \in \R, \nu > 0$, and the Fourier transform of the kernel function $\varphi_{K}$ is even, supported in  $[-1,1]$, and  $\varphi_{K}(1-t) \sim t^{\lambda}$ as $t \downarrow 0$ for some $\lambda \ge 0$; then it is expected that 
\[
s_{n}^{2}(x) \sim h_{n}^{2\gamma (1+\lambda) + 2\gamma_{0}} e^{2\nu h_{n}^{-\gamma}}.
\]
See also \cite{EsUh05}. Importantly, the ratio of $1/\inf_{|t| \leq h_{n}^{-1}} | \varphi_{U}(t)|$ over $\inf_{x \in I} \sigma_{n}(x)$ would be then larger in the supersmooth case than that in the ordinary smooth case; the ratio is $O(h_{n}^{-\gamma (1+\lambda)})$ in the supersmooth case, while it is $O(h_{n}^{-1/2})$ in the ordinary smooth case. This would require $m$ to be of larger order than $n$ (i.e., $m/n \to \infty$) to formally show validity of the confidence band in the supersmooth case. Analogous discussion can be found in \cite{KaSa16} in the density deconvolution case. 
2) Minimax rates of convergence for estimation of $g$ under the sup-norm loss are logarithmically slow (i.e., of the form $(\log n)^{-c}$ for some constant $c > 0$), even when the measurement error distribution is assumed to be known \citep{FaTr93}. These difficulties prevent us from directly extending our analysis to the supersmooth case. Hence the supersmooth case is left for future research. 
\end{remark}

The proofs of Theorems \ref{thm: Gaussian approximation} and \ref{thm: validity of MB} build on non-trivial applications of the intermediate Gaussian and multiplier bootstrap approximation theorems developed in \cite{ChChKa14a, ChChKa14b, ChChKa16}. However, we stress that Theorems \ref{thm: Gaussian approximation} and \ref{thm: validity of MB} do not follow directly from the general theorems in \cite{ChChKa14a, ChChKa14b,ChChKa16} and require substantial work. This is because 1) first of all, how to devise a multiplier bootstrap in EIV regression is not apparent, and as discussed in Remark \ref{rem: multiplier process} our construction of the multiplier process appears to be novel; 2) the ``population'' deconvolution kernel $K_{n}$ is implicitly defined via the Fourier inversion and substantially different from standard kernels in the error-free case; and 3) the deconvolution kernel $K_{n}$ is in fact unknown and estimated, so that its estimation error has to be taken into account. 

An alternative standard technique to derive Gaussian approximations similar to (\ref{eq: Gaussian approximation}) is to apply the Koml\'{o}s-Major-Tusn\'{a}dy (KMT) strong approximation \citep{KoMaTu75}. In a problem of constructing confidence bands in deconvolution with known error distribution, \citet{BiDuHoMu07} (and \citet{ScMuDu13}) use the KMT approximation to derive Gaussian approximations to the deconvolution kernel density estimator. However, the KMT approximation is tailored to empirical processes indexed by univariate functions and hence is not applicable to our problem. Alternatively, we can use Rio's coupling \citep[see][]{Ri94}, but to apply Rio's coupling, we would have to assume (at least) that $Y$ is bounded (rather than finite fourth moment) and $K_{n}$ has total variation of order $h_{n}^{-\alpha}$ (which requires additional conditions on the measurement error distribution). By employing the techniques developed in \cite{ChChKa14a, ChChKa14b,ChChKa16}, we are able to avoid such restrictive conditions.

\section{Practical considerations}
\label{sec: practical considerations}

\subsection{Additional regularization}
\label{sec: additional regularization}

Estimation of the characteristic function, $\varphi_\U$, is difficult in practice.
Therefore, an additional regularization including a ridge parameter, for example, is useful for robustness.
Following \citet[][Sec. 4]{DeHaMe08}, we employ 
$$
\tilde\varphi_\U(t)
=
\widehat\varphi_\U(t) \cdot 1_{\{ t \in \widehat T \}}
+
\left(1 + \widehat a_\U t^2 \right)^{-\widehat b_\U} \cdot 1_{\{ t \not\in \widehat T \}},
$$
where $\widehat T$ is the largest interval around 0 in which $\widehat\varphi_{\U}$ is non-decreasing to the left of 0 and non-increasing to the right of 0,
and $(a_\U, b_\U) = (\widehat a_\U, \widehat b_\U)$ fits the empirical second and fourth moments of $\eta$ with the characteristic function $t \mapsto \left(1 + a_\U t^2 \right)^{- b_\U}$, i.e., $(\widehat a_\U, \widehat b_\U)$ are chosen to satisfy 
$
2 \widehat a_\U \widehat b_\U = \frac{1}{m} \sum_{j=1}^m \eta_j^2
$
and
$
12 \widehat a_\U^2 \ \widehat b_\U \left(\widehat b_\U + 1\right) = \frac{1}{m} \sum_{j=1}^m \eta_j^4.
$
In case where these coefficients are negative, we set $\widehat b_\U = 1$ following \citet[][Sec. 4]{DeHaMe08}.
That is, we set
\begin{align*}
\widehat a_\U &= \frac{1}{2 \widehat b_\U} \cdot \frac{1}{m} \sum_{j=1}^m \eta_j^2
\quad\text{ and }
\\
\widehat b_\U &=
\begin{cases}
1
&\text{if } \frac{1}{m} \sum_{j=1}^m \eta_j^4 \leq 3\left(\frac{1}{m} \sum_{j=1}^m \eta_j^2\right)^2
\\
\frac{3\left(\frac{1}{m} \sum_{j=1}^m \eta_j^2\right)^2}{\frac{1}{m} \sum_{j=1}^m \eta_j^4-3\left(\frac{1}{m} \sum_{j=1}^m \eta_j^2\right)^2}
&\text{if } \frac{1}{m} \sum_{j=1}^m \eta_j^4 > 3\left(\frac{1}{m} \sum_{j=1}^m \eta_j^2\right)^2
\end{cases}.
\end{align*}
With this approach, the robustness is achieved by a fully data-driven manner.
In simulation studies and real data analysis below, we use the estimated deconvolution kernel with this additional regularization:
$$
\tilde K_n(x) = \frac{1}{2\pi} \int_{\mathbb{R}} e^{-itx} \frac{\varphi_K(t)}{\tilde \varphi_\U(t/h)} dt.
$$

\subsection{Bandwidth Selection}
\label{sec: bandwidth selection}

For another practical consideration, we present a bandwidth selection rule based on the SIMEX methods \citep{DeHa08,DeHaJa15}.
The selection procedure consists of two steps.
In the first step, we choose an optimal bandwidth $\widetilde h_n$ in terms of balancing the supremum squared bias and the supremum variance (Section \ref{sec: optimal bandwidth selection}).
In the second step, we choose a scaling factor $\widetilde \chi_n$ such that $\widetilde \chi_n \widetilde h_n$ optimizes the coverage probability with respect to the nominal one (Section \ref{sec: undersmoothing bandwidth selection}).

\subsubsection{Optimal bandwidth selection}
\label{sec: optimal bandwidth selection}

To clarify the dependence on a candidate bandwidth $h$, write $K(x;h) = \frac{1}{2\pi} \int_\mathbb{R} e^{-itx} \frac{\varphi_K(t)}{\varphi_\U(t/h)} dt$, $s^2(x;h) = \Var (\{Y - g(x)\} K(x-W)/h; h))$, and
 $A(x;h) = \Ep [\{Y - g(x)\} K((x-W)/h; h)]$.
An optimal choice $h_n$ (with the $\log n$ factor ignored) minimizes the criterion function $\Gamma_n$ defined by
$
\Gamma_n(h) =
\vert\vert A^2(\cdot;h) \vert\vert_I  + \vert\vert s^2(\cdot;h) / n \vert\vert_I .
$
If the true function $g$ \textit{were} known, then a natural estimator of the optimal bandwidth would be 
$
\tilde h_n = \arg\min_{h>0} \vert\vert \tilde A_{n}^{2}(\cdot;h) \vert\vert_I  + \vert\vert \tilde s_{n}^{2}(\cdot;h) / n \vert\vert_I,
$
where
\begin{align*}
\tilde s_{n}^{2}(x;h) &= \frac{1}{n}\sum_{j=1}^n \{Y_j - g(x)\}^2 \tilde K^2_{n}((x-W_j)/h; h) - \tilde A_{n}^{2}(x;h)
\text{ and}\\
\tilde A_{n}(x;h) &= \frac{1}{n}\sum_{j=1}^n \{Y_j - g(x)\} \tilde K_{n}((x-W_j)/h; h).
\end{align*}
However, this is an infeasible procedure because we do not know the true function $g$ nor the distribution of $(Y,X)$ in practice.
Therefore, adapting \cite{DeHa08} to our framework, we use the SIMEX method outlined below to estimate the optimal $h_n$.

Let $\{\U^{\dagger}_1,\dots,\U^{\dagger}_n\}$ and $\{\U^{\dagger\dagger}_1,\dots,\U^{\dagger\dagger}_n\}$ be independent samples drawn with replacement from $\{\eta_1,\dots,\eta_m\}$, independently of the data $\{(Y_1,W_1),\dots,(Y_n,W_n)\}$.
Write $W^{\dagger}_j = W_j + \U^{\dagger}_j$ and $W^{\dagger\dagger}_j = W_j + \U^{\dagger}_j + \U^{\dagger\dagger}_j$ for each $j \in \{1,\dots,n\}$.
As counterparts of $s^2(x;h)$ and $A(x;h)$, consider
$s^{\dagger 2}(x;h) = \Var (\{Y - g_1(x)\} K((x-W^{\dagger})/h; h))$,
$A^{\dagger}(x;h) = \Ep [\{Y - g_1(x)\} K((x-W^{\dagger})/h; h)]$,
$s^{\dagger\dagger 2}(x;h) = \Var (\{Y - g_2(x)\} K((x-W^{\dagger\dagger})/h; h))$, and
$A^{\dagger\dagger}(x;h) = \Ep [\{Y - g_2(x)\} K((x-W^{\dagger\dagger})/h; h)]$,
where $g_1(x) = \Ep[Y \mid W=x]$ and $g_2(x) = \Ep [Y \mid W^\dagger=x]$.
Substituting estimates and the additional regularization suggested at the end of Section \ref{sec: additional regularization}, we can compute
\begin{align*}
\tilde s_{n}^{\dagger 2}(x;h) &= \frac{1}{n}\sum_{j=1}^n \{Y_j - \tilde g_1(x)\}^2 \tilde K^2_{n}((x-W_j^{\dagger})/h; h) - \tilde A_{n}^{\dagger 2}(x;h)
\text{ where}\\
\tilde A_{n}^{\dagger}(x;h) &= \frac{1}{n}\sum_{j=1}^n \{Y_j - \tilde g_1(x)\} \tilde K_{n}((x-W_j^{\dagger})/h; h),
\text{ and}\\
\tilde s_{n}^{\dagger\dagger 2}(x;h) &= \frac{1}{n}\sum_{j=1}^n \{Y_j - \tilde g_2(x)\}^2 \tilde K^2_{n}((x-W_j^{\dagger\dagger})/h; h) - \tilde A_{n}^{\dagger\dagger 2}(x;h)
\text{ where}\\
\tilde A_{n}^{\dagger\dagger}(x;h) &= \frac{1}{n}\sum_{j=1}^n \{Y_j - \tilde g_2(x)\} \tilde K_{n}((x-W_j^{\dagger\dagger})/h; h),
\end{align*}
with $\tilde g_1$ and $\tilde g_2$ denoting error-free estimates of $g_1$ and $g_2$, respectively, which are feasible with the observed and simulated data $\{(Y_1,W_1,W^\dagger_1),\dots,(Y_n,W_n,W^\dagger_n)\}$.
Let
\begin{align*}
\tilde \Gamma^{\dagger}_n(h) &=
\vert\vert \tilde A_{n}^{\dagger 2}(\cdot;h) \vert\vert_I  + \vert\vert \tilde s_{n}^{\dagger 2}(\cdot;h) / n \vert\vert_I
\text{ and}\\
\tilde \Gamma^{\dagger\dagger}_n(h) &=
\vert\vert \tilde A_{n}^{\dagger\dagger 2}(\cdot;h) \vert\vert_I  + \vert\vert \tilde s_{n}^{\dagger\dagger 2}(\cdot;h) / n \vert\vert_I.
\end{align*}

Since $\tilde \Gamma^{\dagger}_n(h)$ and $\tilde \Gamma^{\dagger\dagger}_n(h)$ are affected by the simulated data, $\{\U^{\dagger}_1,\dots,\U^{\dagger}_n\}$ and $\{\U^{\dagger\dagger}_1,\dots,\U^{\dagger\dagger}_n\}$, we average over a large number $B$ of their versions, $\{\tilde \Gamma^{\dagger}_{n,b}(h)\}_{b=1}^B$ and $\{\tilde \Gamma^{\dagger\dagger}_{n,b}(h)\}_{b=1}^B$.
We then estimate the optimal bandwidths
\begin{align*}
\tilde h_{n,1} = \arg\min_{h>0} \frac{1}{B} \sum_{b=1}^B \tilde \Gamma^{\dagger}_{n,b}(h)
\quad\text{ and }\quad
\tilde h_{n,2} = \arg\min_{h>0} \frac{1}{B} \sum_{b=1}^B \tilde \Gamma^{\dagger\dagger}_{n,b}(h)
\end{align*}
for $g_1$ and $g_2$, respectively.
Note that ``$W^\ddagger$ measures $W^\dagger$ in the same way that $W^\dagger$ measures $W$ and $W$ measures $X$'' \citep[][Sec. 2.3]{DeHa08}.
We therefore expect that $\widetilde h_{n,2} / \widetilde h_{n,1}$ is similar to $\widetilde h_{n,1} / \widetilde h_{n}$.
This motivates the selection of $h_n$ by the linear backward extrapolation
$$
\tilde h_{n} = \tilde h_{n,1}^2 / \tilde h_{n,2} .
$$

\subsubsection{Coverage probability optimal bandwidth selection}
\label{sec: undersmoothing bandwidth selection}

The bandwidth chosen in Section \ref{sec: optimal bandwidth selection} is based on balancing the bias and variance, and therefore fails to provide an asymptotically valid inference result.
In this section, we discuss a fully data-driven procedure of choosing a coverage-probability optimal choice of bandwidth.
To clarify the dependence on a candidate bandwidth $h$, write the confidence band by
$
\hat{\mathcal{C}}_{1-\tau}(x;h), \ x \in I.
$
We define the ideal bandwidth to be $\chi_n h_n$ that minimizes the coverage error as
$
\chi_n = \arg\min_{\chi \geq 1} \left\vert \Prob\left(g(x) \in \widehat{\mathcal{C}}_{1-\tau}(x;\chi h_n) \ \forall x \in I \right) - (1-\tau)\right\vert.
$
In practice, we do not know $g$ nor the distribution of $(Y,X)$.
We therefore adapt the SIMEX method of \citet[][Sec. 3.2]{DeHaJa15} to our framework.

The same notations for the simulated samples from Section \ref{sec: optimal bandwidth selection} carry over to the current section. 
Let
$
\hat{\mathcal{C}}^{\dagger}_{1-\tau}(x;h)
\text{ and }
\hat{\mathcal{C}}^{\dagger\dagger}_{1-\tau}(x;h),
x \in I
$
denote the confidence bands for $g_1$ and $g_2$, respectively, computed using the samples $\{(Y_1,W^{\dagger}_1),\dots,(Y_n,W^{\dagger}_n)\}$ and $\{(Y_1,W^{\dagger\dagger}_1),\dots,(Y_n,W^{\dagger\dagger}_n)\}$, respectively.
We repeat the simulation for large number $B$ of times to construct versions, $\{\hat{\mathcal{C}}^{\dagger}_{1-\tau,b}(x;h)\}_{b=1}^B$ and $\{\hat{\mathcal{C}}^{\dagger\dagger}_{1-\tau,b}(x;h)\}_{b=1}^B$, and estimate the coverage probabilities for $g_1$ and $g_2$ by
\begin{align*}
\widehat{CP}_{1-\tau}^{\dagger} (h) &= \frac{1}{B} \sum_{b=1}^B {1}_{\left\{ \tilde g_{1}(x) \in \hat{\mathcal{C}}^{\dagger}_{1-\tau,b}(x;h) \ \forall x \in I\right\}}
\qquad\text{and}\\
\widehat{CP}_{1-\tau}^{\dagger\dagger} (h) &= \frac{1}{B} \sum_{b=1}^B {1}_{\left\{ \tilde g_{2,b}(x) \in \hat{\mathcal{C}}^{\dagger\dagger}_{1-\tau,b}(x;h)  \ \forall x \in I \right\}},
\end{align*}
where $\tilde g_1$ and $\tilde g_{2,b}$ denote error-free estimates of $g_1$ and $g_2$, respectively, which are feasible with the observed data $\{(Y_1,W_1),\dots,(Y_n,W_n)\}$ and $b$-th simulated data $\{(Y_1,W_1^{\dagger}),\dots,(Y_n,W_n^{\dagger})\}$, respectively.

We then estimate the coverage-error-optimal bandwidths for $g_1$ and $g_2$ by $\tilde \chi_{n,1} \tilde h_{n,1}$ and $\tilde \chi_{n,2} \tilde h_{n,2}$, respectively, where
\begin{align*}
\tilde \chi_{n,1} &= \arg\min_{\chi \geq 1} \left\vert \widehat{CP}_{1-\tau}^{\dagger} (\chi \tilde h_{n,1}) - (1-\tau) \right\vert
\qquad\text{and}\\
\tilde \chi_{n,2} &= \arg\min_{\chi \geq 1} \left\vert \widehat{CP}_{1-\tau}^{\dagger\dagger} (\chi \tilde h_{n,2}) - (1-\tau) \right\vert.
\end{align*}
Finally, the coverage-error-optimal bandwidth for $g$ may be estimated by
$
\tilde \chi_n \tilde h_n
$
where, following a similar reasoning to the one made at the end of Section \ref{sec: optimal bandwidth selection}, $\tilde \chi_n$ is given by the linear backward extrapolation
$
\tilde \chi_n = \tilde \chi_{n,1}^2 / \tilde \chi_{n,2}.
$

\begin{remark}\label{remark:simex}
The bandwidth selection procedure presented above consists of two steps: the first step chooses the optimal bandwidth $\widetilde h_n$ due to \cite{DeHa08}, and the second step chooses the scaling factor $\widetilde \chi_n$ to optimize the coverage probability due to \cite{ DeHaJa15}.
To our best knowledge, this data-driven selection procedure has not been formally proven to be consistent with theoretical requirements of inference methods even under the simpler settings covered in the existing literature.
In order to ensure that the theoretical results of the paper still hold, we require the existence of a deterministic bandwidth $h_n$ such that $\widetilde \chi_n \widetilde h_n / h_n = 1 + o_p(h_n^{-1/2})$.
Indeed, the two-step SIMEX procedure produces a complicated stochastic sequence, and we hence remark that a theoretical investigation of this practical issue by itself deserves a future research.
\end{remark}

\section{Simulation studies}
\label{sec: simulation}
\subsection{Simulation Framework}

We consider two data generating models, reflecting two common patterns of data availability.
For the first model, the observed  data $\mathcal{D}_n = \{ (Y_{j},W_{j},\eta_{j}) \}_{j=1}^{n}$ is constructed by 
\begin{equation*}
\text{Model 1}: \ Y_j = g(X_j) + \epsilon_j , \ W_j = X_j + \U_j, \ \eta_{j} \stackrel{d}{=} \U_{j}, \ j=1,\dots,n
\end{equation*}
where  $X_{j}, \epsilon_{j}, \U_{j}$, and $\eta_{j}$ are  independent with $X_j \sim N(0,\sigma_X^2), \epsilon_{j} \sim N(0,1)$ and $\U_j \stackrel{d}{=} \eta_j \sim \mathrm{Laplace}\,(0,2^{-1/2})$.
The characteristic function of $\U_j$ is $\varphi_{\U}(t) = (1 + t^2/2)^{-1}$, which is non-vanishing on $\R$ and ordinary smooth of order $\alpha=2$. 
The signal-to-noise ratio is $\sqrt{\Var(X)/\Var(\U)} =  \sigma_X$.

The second model considers the repeated measurement setup:
\begin{equation*}
\text{Model 2}: Y_{j} = g(X_{j}) + \epsilon_{j}, \ W^{(k)}_{j} = X_{j} + \U^{(k)}_{j}, \ j=1,\dots,n; k=1,2,
\end{equation*}
where $X_{j},\epsilon_{j},\U_{j}^{(1)}$, and $\U_{j}^{(2)}$ are  independent with $X_{j} \sim N(0,\sigma_{X}^{2}), \epsilon_{j} \sim N(0,1)$ and $\U^{(k)}_{j} \sim \mathrm{Laplace}\,(0,1)$. 
We observe $\{ (Y_{j},W_{j}^{(1)},W_{j}^{(2)}) \}_{j=1}^{n}$. 
By defining $W_{j} :=  ( W^{(1)}_{j} + W^{(2)}_{j} ) / 2$ and $\eta_j :=  ( W^{(1)}_{j} - W^{(2)}_{j} ) / 2$,
we obtain the generated data $\mathcal{D}_{n} = \{ (Y_{j},W_{j},\eta_{j}) \}_{j=1}^{n}$ such 
that $W_{j} = X_{j} + \U_{j}$ with $\U_{j} = (\U_{1}+\U_{2})/2 \stackrel{d}{=} \eta_{j}$.
For Model 2, the characteristic function of $\U_{j}$ is $\varphi_{\U}(t) = (1+t^{2}/4)^{-2}$, 
which is non-vanishing on $\R$ and ordinary smooth with order $\alpha=4$. 
The signal-to-noise ratio is $\sqrt{\Var(X)/\Var(\U)} = \sigma_X$. 

Simulations are run across five different specifications of $g$ and two alternative values of $\sigma_{X}$.
The five specifications of $g$ are
$g(x) = x$, 
$g(x) = x^2$,
$g(x) = x^3$,
$g(x) = \sin(x)$, and
$g(x) = \cos(x)$.
The two alternative values of $\sigma_{X}$ are $\{1/\sqrt{1/4},1/\sqrt{1/3}\}$ for both Model 1 and Model 2. 
The variances of the errors are one fourth (25\%) and one third (33\%) that of $X$ with these values of $\sigma_{X}$. 

We use Monte Carlo simulations to evaluate the coverage probabilities of our confidence bands for $g$ on the interval $I = [-\sigma_X, \sigma_X]$.
To assess how informative these confidence bands are, we also record and present the median statistics of the average band lengths on $I$, and also visually illustrate realized bands via figures.
We use the kernel function $K$ defined by its Fourier transform $\varphi_K$ given by
\[
\varphi_K(t) 
= 1_{\{ |t| \leq c \}} + \exp\left\{ \frac{-b \exp(-b/(\vert t \vert - c)^2)}{(\vert t \vert - 1)^2} \right\} 1_{\{ c < |t| < 1 \}},
\]
where $b=1$ and $c=0.05$ \citep[cf.][]{McPo04, BiDuHoMu07}.
The function $\varphi_K$ is infinitely differentiable with support $[-1,1]$, $\varphi^{(\ell)}(0)=0$ for any $\ell \geq 1$, 
and its inverse Fourier transform $K$ is real-valued and integrable with $\int_\mathbb{R} K(x) dx = 1$.
We follow the additional regularization discussed in Section \ref{sec: additional regularization} and the bandwidth selection rule discussed in Section \ref{sec: bandwidth selection}. 

\subsection{Simulation Results}
Since the simulation results are qualitatively similar for all specifications of the function $g$, we only discuss simulation results for $g(x) = x^{3}$ and $g(x) = \sin (x)$ in the present section. 
Results for the remaining functions, $g(x)=x$, $g(x)=x^2$, and $g(x)=\cos(x)$, can be found in Appendix \ref{sec: additional simulation}. 
Table \ref{tab:simulation_results} presents simulated uniform coverage probabilities for (A) $g(x) = x^3$ and (B) $g(x)=\sin(x)$ by estimated confidence bands in $I=[-\sigma_X,\sigma_X]$, and the median statistics of the average band lengths on $I$ based on 1,000 Monte Carlo iterations.
Observe that the simulated coverage probabilities approaches the nominal probabilities as the sample size increases, and they are reasonably close for the sample size of $n=400$.
Also observe that the median statistics of the average band length tend to decrease as the sample size increases.

\begin{table}
	\centering
		\scalebox{1}{
		\begin{tabular}{ccccccccc}
		\hline\hline
		\multicolumn{3}{l}{(A) Regression: $g(x) = x^3$} && \multicolumn{2}{c}{Error Variance} && \multicolumn{2}{c}{Error Variance}\\
			& Nominal & Sample && \multicolumn{2}{c}{=1/4 (25\%)} && \multicolumn{2}{c}{=1/3 (33\%)}\\
		\cline{5-6}\cline{8-9}
			Model & Probability & Size ($n$) && Coverage & Length && Coverage & Length\\
		\hline
			1 & 0.900         
			 &   100  && 0.886 &14.802 && 0.894 &10.294\\
			&&   200  && 0.931 &10.211 && 0.917 & 7.222\\
			&&   400  && 0.914 & 7.391 && 0.912 & 5.286\\
		\cline{2-9}
			1 & 0.950
			 &   100  && 0.903 &16.527 && 0.919 &11.449\\
			&&   200  && 0.963 &11.405 && 0.947 & 8.029\\
			&&   400  && 0.955 & 8.215 && 0.950 & 5.883\\
		\hline
			2 & 0.900           
			 &   100  && 0.930 &16.587 && 0.927 &11.554\\
			&&   200  && 0.949 &11.030 && 0.935 & 7.763\\
			&&   400  && 0.925 & 7.288 && 0.893 & 5.097\\
		\cline{2-9}
			2 & 0.950
			 &   100  && 0.951 &18.428 && 0.950 &12.870\\
			&&   200  && 0.974 &12.271 && 0.962 & 8.684\\
			&&   400  && 0.956 & 8.149 && 0.941 & 5.691\\
		\hline\hline
\\
		\hline\hline
		\multicolumn{3}{l}{(B) Regression: $g(x) = \sin(x)$} && \multicolumn{2}{c}{Error Variance} && \multicolumn{2}{c}{Error Variance}\\
			& Nominal & Sample && \multicolumn{2}{c}{=1/4 (25\%)} && \multicolumn{2}{c}{=1/3 (33\%)}\\
		\cline{5-6}\cline{8-9}
			Model & Probability & Size ($n$) && Coverage & Length && Coverage & Length\\
		\hline
			1 & 0.900
			 &   100  && 0.862 & 2.134 && 0.866 & 2.363\\
			&&   200  && 0.890 & 1.619 && 0.874 & 1.512\\
			&&   400  && 0.886 & 1.194 && 0.889 & 1.142\\
		\cline{2-9}
			1 & 0.950
			 &   100  && 0.904 & 2.369 && 0.906 & 2.621\\
			&&   200  && 0.932 & 1.796 && 0.915 & 1.682\\
			&&   400  && 0.940 & 1.321 && 0.934 & 1.271\\
		\hline
			2 & 0.900            
			 &   100  && 0.849 & 2.049 && 0.863 & 2.321\\
			&&   200  && 0.883 & 1.636 && 0.874 & 1.526\\
			&&   400  && 0.897 & 1.139 && 0.871 & 1.068\\
		\cline{2-9}
			2 & 0.950
			 &   100  && 0.900 & 2.273 && 0.901 & 2.572\\
			&&   200  && 0.931 & 1.820 && 0.923 & 1.697\\
			&&   400  && 0.944 & 1.267 && 0.918 & 1.190\\
		\hline\hline
		\end{tabular}
		}
\medskip
	\caption{{\small Simulated uniform coverage probabilities of (A) $g(x) = x^3$ and (B) $g(x) = \sin(x)$ by estimated confidence bands in $I=[-\sigma_X,\sigma_X]$ under normally distributed $X$ and Laplace distributed $\U$. Also reported are the medians of the average band lengths on $I$. The simulated probabilities and lengths are computed for each of the two nominal coverage probabilities, 90\% and 95\%, based on 1,000 Monte Carlo iterations.}}
	\label{tab:simulation_results}
\end{table}

Note that our method applies even when $I$ is a singleton.
In this case, a uniform confidence band boils down to a pointwise one.
We next run simulations for the pointwise cinfidence band with $I=\{x\}$ for each location $x \in [-\sigma_X,\sigma_X]$.
The second column group of Table \ref{tab:simulation_results_pointwise} summarizes simulation results regarding the pointwise band.
For comparisons, the first column group of Table \ref{tab:simulation_results_pointwise} also displays the simulation results regarding the uniform band copied from Table \ref{tab:simulation_results}.
For the pointwise band, in addition to the uniform coverage probabilities and the medians of the average band lengths, we also report the average fraction of the length of the subset of $I$ on which the pointwise band covers the true function $g$.
Notice that the uniform coverage probabilities for the pointwise band fall short of the nominal probabilities to large extents, and they deviate further as the sample size increases.
These results motivate the attractiveness of the uniform band for the purpose of assessing the global shape of the true function $g$.
With this said, we also observe that average covered portions by the poitwise band are fairly close to the nominal probabilities, which is consistent with a popular (but unclear) belief in the literature.
Theoretical discussions of the latter observation is out of the scope of this paper, but we would like to make this remark to suggest a directin of future research.

\begin{table}
	\centering
		\scalebox{1}{
		\begin{tabular}{cccccccccc}
		\hline\hline
		\multicolumn{3}{l}{(A) Regression: $g(x) = x^3$} && \multicolumn{2}{c}{Uniform Band} && \multicolumn{3}{c}{Pointwise Band}\\
		\cline{5-6}\cline{8-10}
			Error & Nominal & Sample && Uniform & Band && Uniform & Covered & Band\\
			Variance & Probability & Size ($n$) && Coverage & Length && Coverage & Portion & Length\\
		\hline
			1/4 & 0.900         
			 &   100  && 0.886 &14.802 && 0.618 & 0.935 & 9.727\\
(25\%)&&   200  && 0.931 &10.211 && 0.593 & 0.923 & 6.968\\
			&&   400  && 0.914 & 7.391 && 0.555 & 0.904 & 4.953\\
		\cline{2-10}
			1/4 & 0.950
			 &   100  && 0.903 &16.527 && 0.794 & 0.973 & 11.583\\
(25\%)&&   200  && 0.963 &11.405 && 0.780 & 0.966 & 8.295\\
			&&   400  && 0.955 & 8.215 && 0.733 & 0.954 & 5.894\\
		\hline
			1/3 & 0.900           
			 &   100  && 0.894 &10.294 && 0.628 & 0.933 & 6.797\\
(33\%)&&   200  && 0.917 & 7.222 && 0.577 & 0.914 & 5.012\\
			&&   400  && 0.912 & 5.286 && 0.542 & 0.899 & 3.577\\
		\cline{2-10}
			1/3 & 0.950
			 &   100  && 0.919 &11.449 && 0.802 & 0.970 & 8.131\\
(33\%)&&   200  && 0.947 & 8.029 && 0.768 & 0.962 & 5.989\\
			&&   400  && 0.950 & 5.883 && 0.728 & 0.951 & 4.259\\
		\hline\hline
\\
		\hline\hline
		\multicolumn{3}{l}{(B) Regression: $g(x) = \sin(x)$} && \multicolumn{2}{c}{Uniform Band} && \multicolumn{3}{c}{Pointwise Band}\\
		\cline{5-6}\cline{8-10}
			Error & Nominal & Sample && Uniform & Band && Uniform & Covered & Band\\
			Variance & Probability & Size ($n$) && Coverage & Length && Coverage & Portion & Length\\
		\hline
			1/4 & 0.900
			 &   100  && 0.862 & 2.134 && 0.544 & 0.908 & 1.421\\
(25\%)&&   200  && 0.890 & 1.619 && 0.538 & 0.912 & 1.053\\
			&&   400  && 0.886 & 1.194 && 0.536 & 0.908 & 0.790\\
		\cline{2-10}
			1/4 & 0.950
			 &   100  && 0.904 & 2.369 && 0.700 & 0.948 & 1.691\\
(25\%)&&   200  && 0.932 & 1.796 && 0.725 & 0.955 & 1.254\\
			&&   400  && 0.940 & 1.321 && 0.716 & 0.953 & 0.939\\
		\hline
			1/3 & 0.900            
			 &   100  && 0.866 & 2.363 && 0.541 & 0.901 & 1.392\\
(33\%)&&   200  && 0.874 & 1.512 && 0.550 & 0.903 & 1.050\\
			&&   400  && 0.889 & 1.142 && 0.521 & 0.897 & 0.750\\
		\cline{2-10}
			1/3 & 0.950
			 &   100  && 0.906 & 2.621 && 0.692 & 0.943 & 1.654\\
(33\%)&&   200  && 0.915 & 1.682 && 0.706 & 0.948 & 1.252\\
			&&   400  && 0.934 & 1.271 && 0.714 & 0.947 & 0.893\\
		\hline\hline
		\end{tabular}
		}
\medskip
	\caption{{\small Simulated uniform coverage probabilities of (A) $g(x) = x^3$ and (B) $g(x) = \sin(x)$ by estimated uniform and pointwise confidence bands in $[-\sigma_X,\sigma_X]$ under normally distributed $X$ and Laplace distributed $\U$. The middle column under the column group of pointwise band displays the fraction of the length of the subset of $I$ on which the pointwise band contains the true function $g$. Also reported are the medians of the average band lengths on $I$. The simulated probabilities and lengths are computed for each of the two nominal coverage probabilities, 90\% and 95\%, based on 1,000 Monte Carlo iterations.}}
	\label{tab:simulation_results_pointwise}
\end{table}

The informativeness of the uniform confidence bands are better assessed with visual presentations, besides the band lengths reported in the table.
Figure \ref{fig:sim_model1_x3_sinx} displays realizations of estimates confidence bands for functions, $g(x) = x^3$ and $g(x)=\sin(x)$, on $I$ for each of the two error variance ratios (1/4 and 1/3) under Model 1 (results for the remaining functions and/or model can be found in Appendix \ref{sec: additional simulation}).
Observe that the realized confidence bands are precise enough to provide us with ideas about the possible shapes of the true regression functions.
We may distinguish the five functions by the realized shapes of the confidence bands.
For these particular realizations, it is not possible to conclude that the band length shrink as the sample size increases, but the band lengths reported in Table \ref{tab:simulation_results} complement the figure to this end.

\begin{figure}
	\centering
		\begin{tabular}{ccc}
		&
		$n=200$&
		$n=400$\\
		\begin{minipage}{0.2\textwidth}$g(x)=x^3$\bigskip\\EV=1/4 (25\%)\end{minipage} &
		\begin{minipage}{0.3\textwidth}\includegraphics[width=1\textwidth]{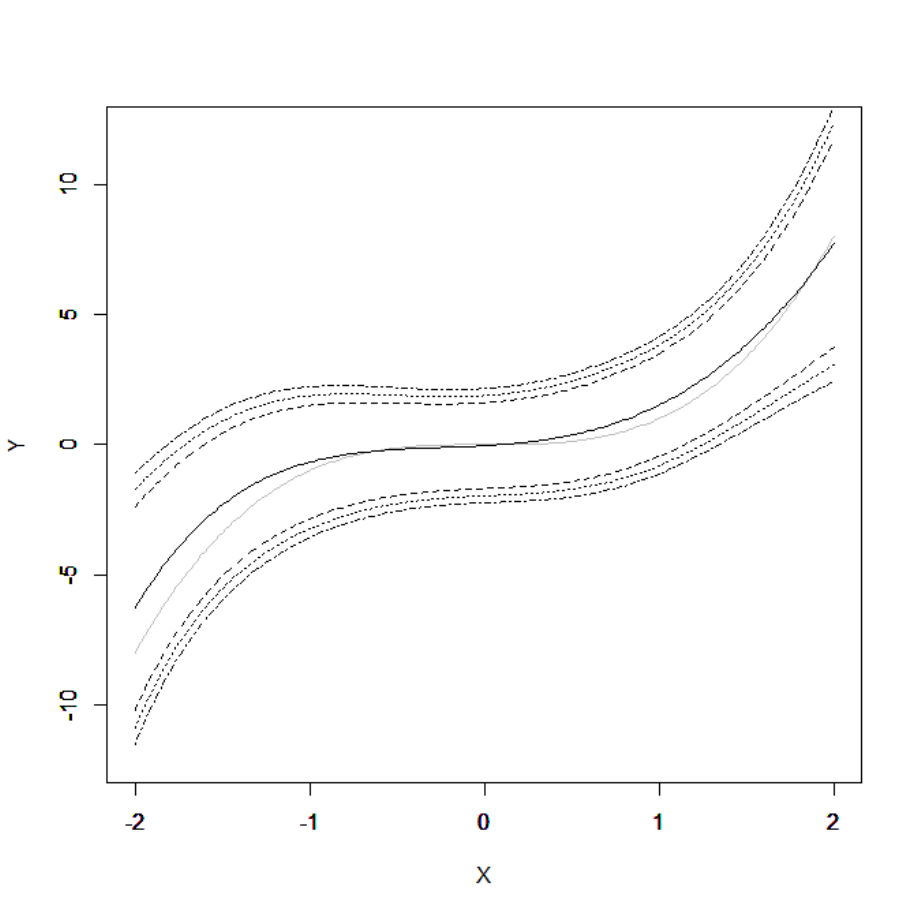}\end{minipage} &
		\begin{minipage}{0.3\textwidth}\includegraphics[width=1\textwidth]{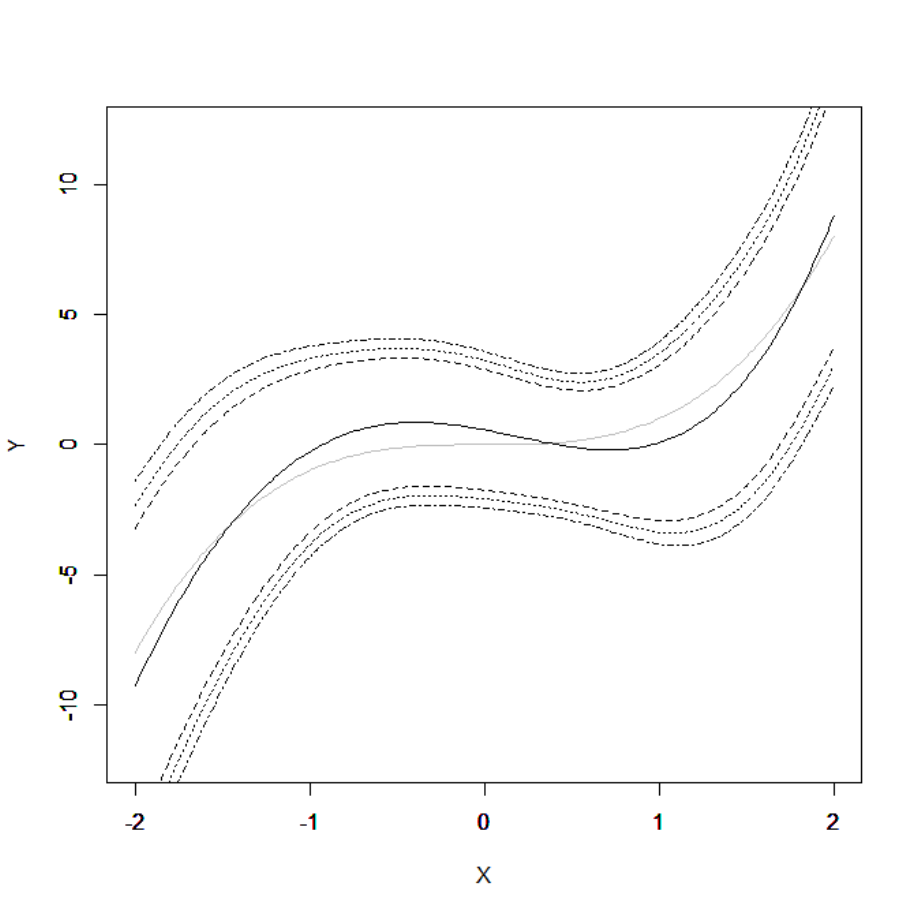}\end{minipage} \\
		\begin{minipage}{0.2\textwidth}$g(x)=x^3$\bigskip\\EV=1/3 (33\%)\end{minipage} &
		\begin{minipage}{0.3\textwidth}\includegraphics[width=1\textwidth]{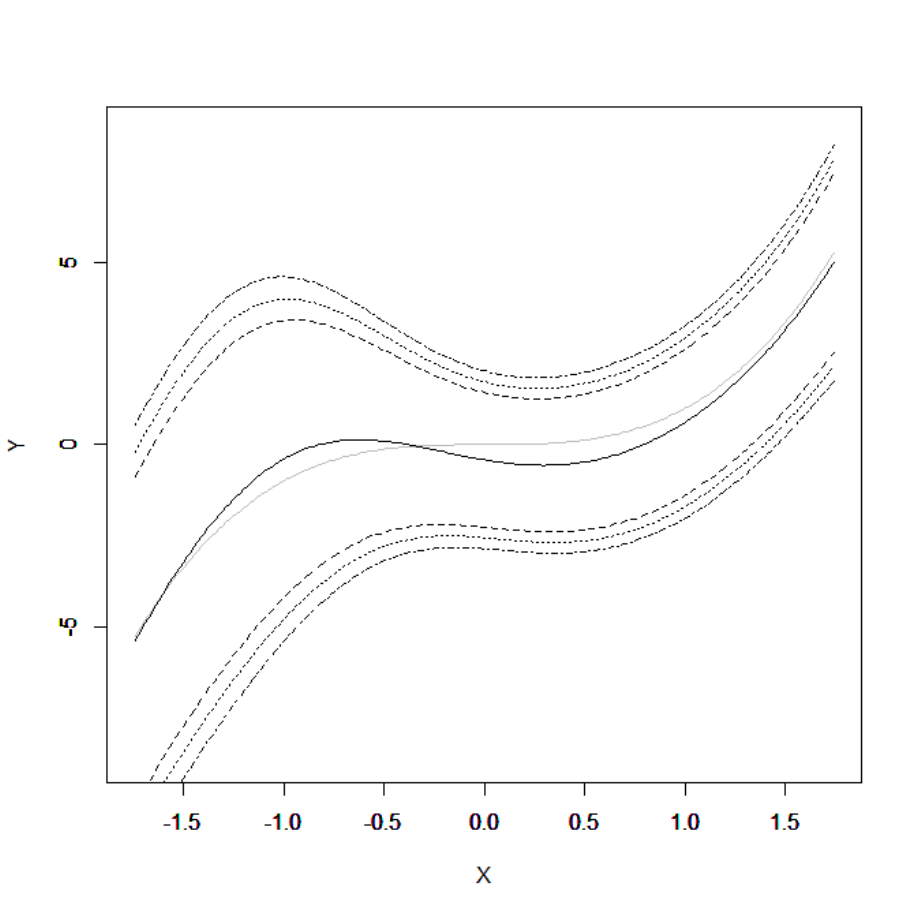}\end{minipage} &
		\begin{minipage}{0.3\textwidth}\includegraphics[width=1\textwidth]{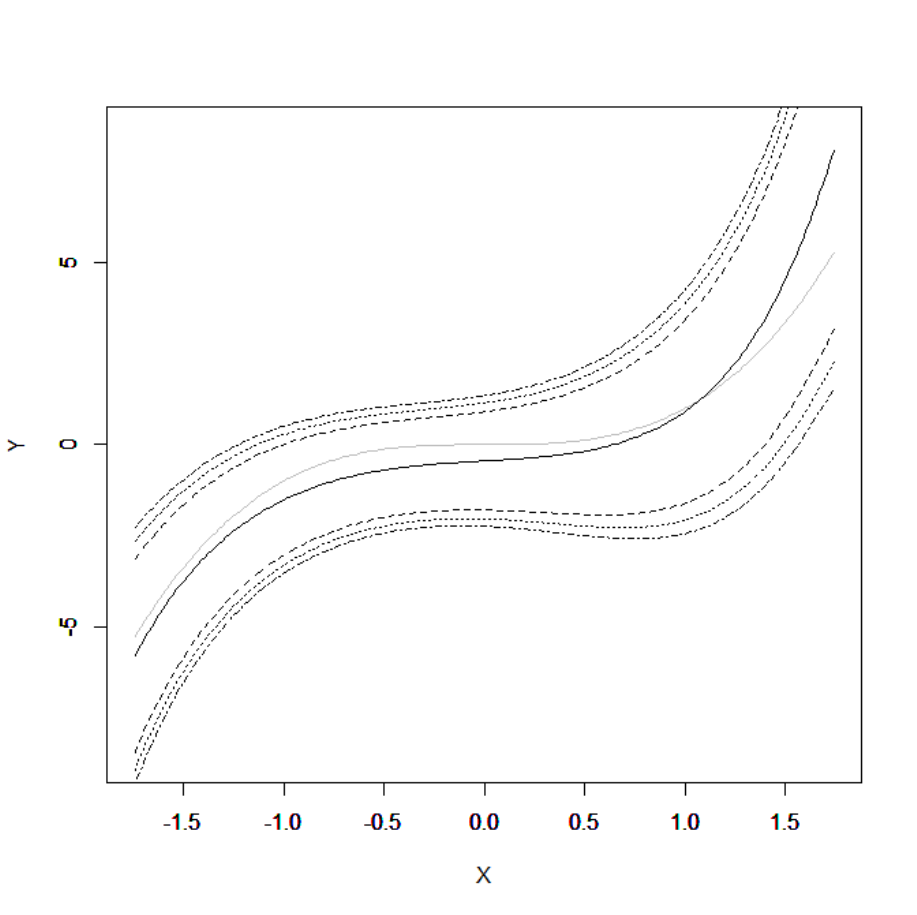}\end{minipage} \\
		\begin{minipage}{0.2\textwidth}$g(x)=\sin(x)$\bigskip\\EV=1/4 (25\%)\end{minipage} &
		\begin{minipage}{0.3\textwidth}\includegraphics[width=1\textwidth]{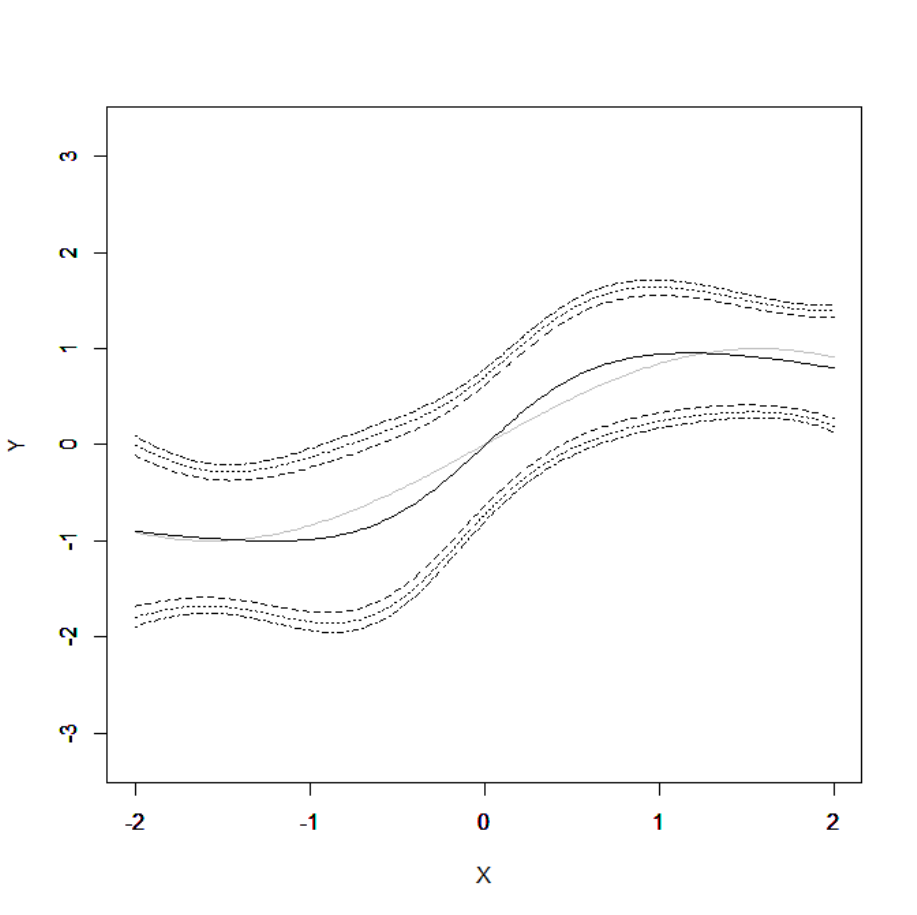}\end{minipage} &
		\begin{minipage}{0.3\textwidth}\includegraphics[width=1\textwidth]{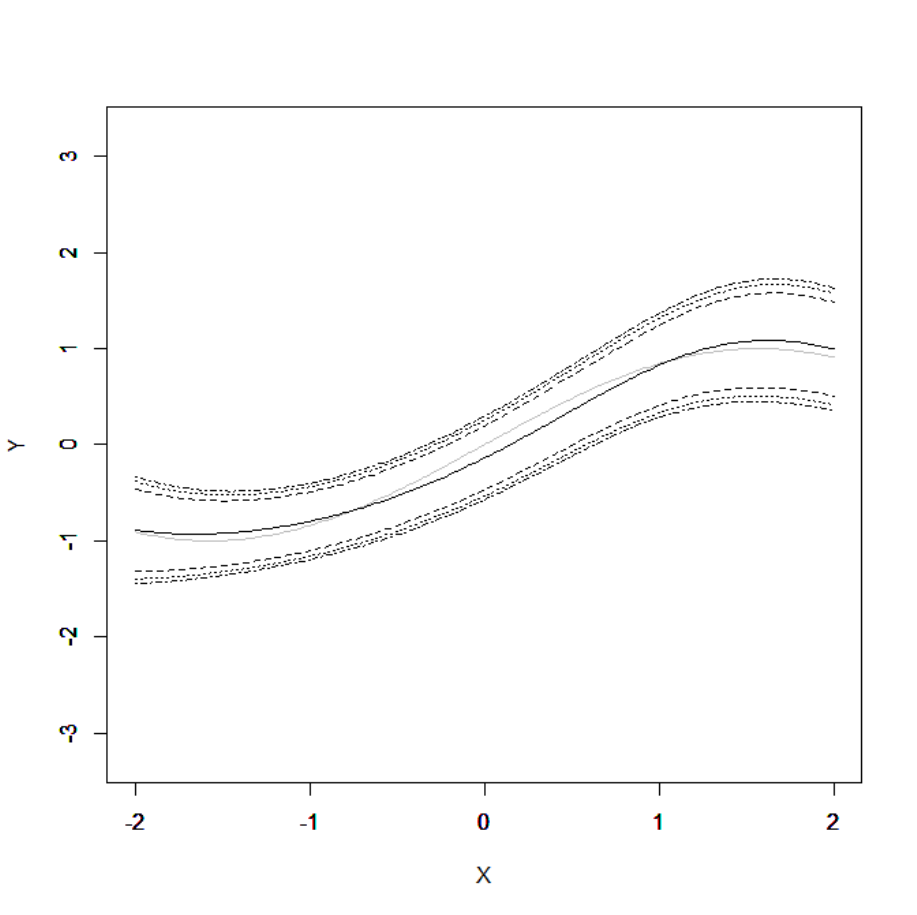}\end{minipage} \\
		\begin{minipage}{0.2\textwidth}$g(x)=\sin(x)$\bigskip\\EV=1/3 (33\%)\end{minipage} &
		\begin{minipage}{0.3\textwidth}\includegraphics[width=1\textwidth]{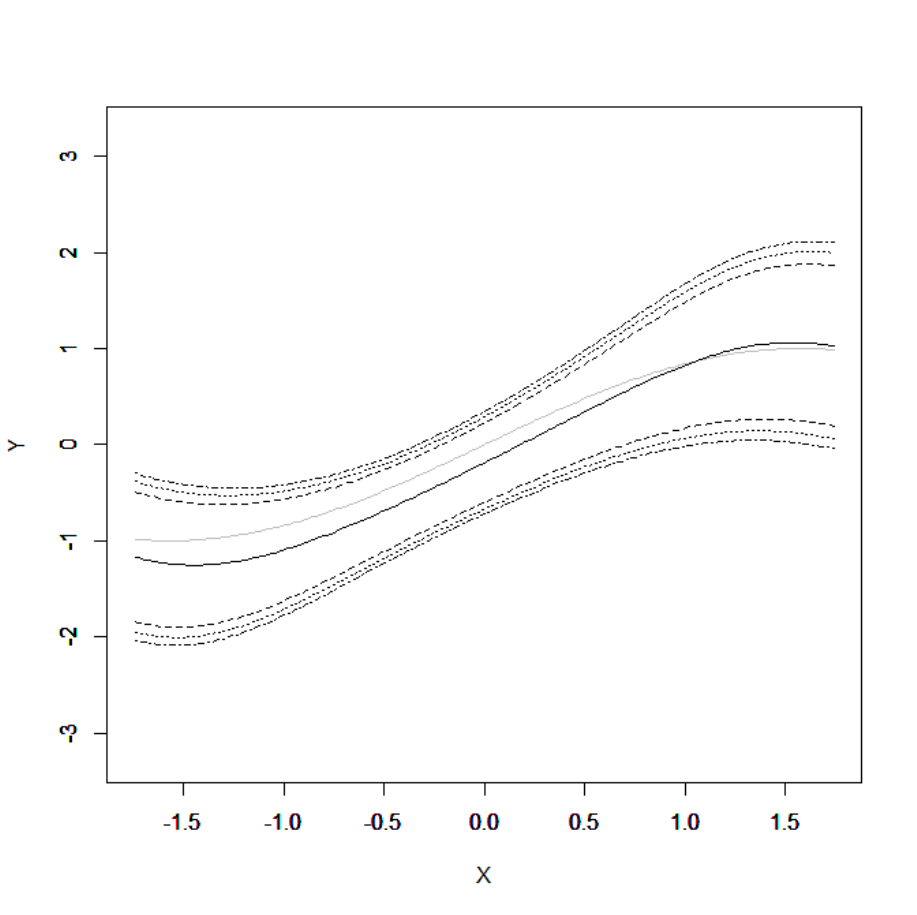}\end{minipage} &
		\begin{minipage}{0.3\textwidth}\includegraphics[width=1\textwidth]{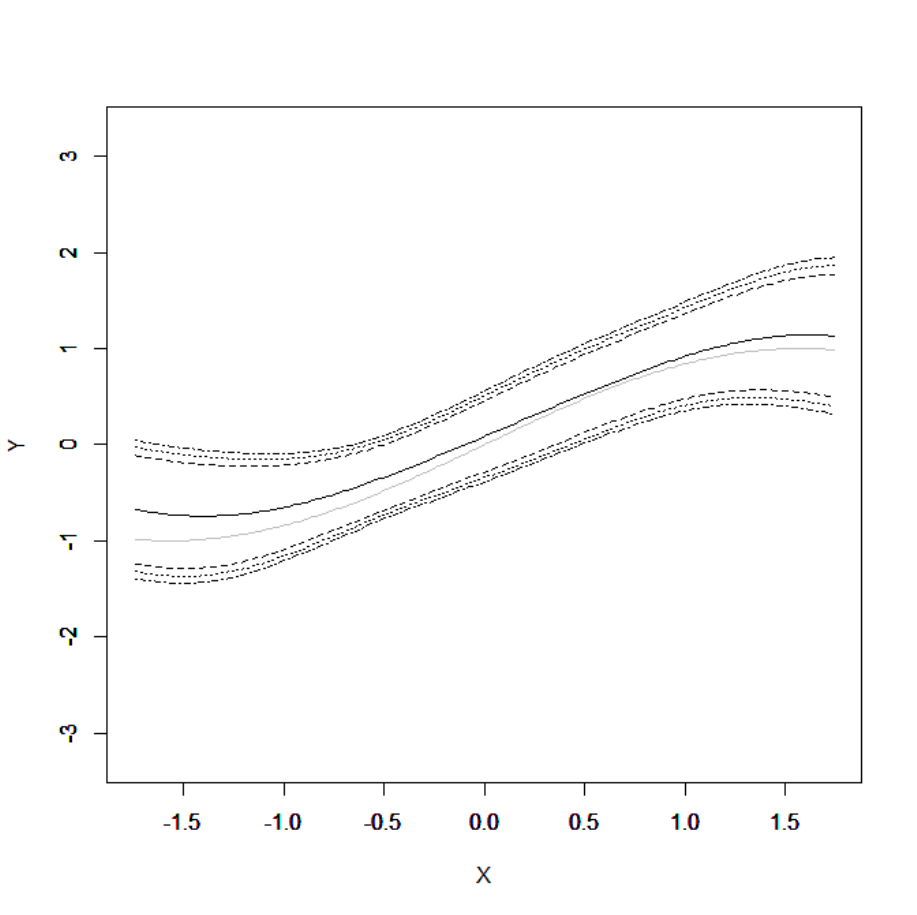}\end{minipage}
		\end{tabular}
	\caption{{\small Confidence bands for $g(x)=\sin(x)$ and $g(x)=x^3$ in Model 1 for error variance ratios of $1/4$ and $1/3$. Gray curves indicate the true function, black solid curves indicate estimates, and dashed curves indicate the 80\%, 90\%, and 95\% confidence bands.}}
	\label{fig:sim_model1_x3_sinx}
\end{figure}

\section{Real data analysis}
\label{sec: real data analysis}

In this section, we apply our method to real data, and draw confidence bands for nonparametric regressions of medical prescription expenditure on body mass index (BMI).
We combine the following two data sets.
The National Health and Nutrition Examination Survey (NHANES) provides self-report-based BMI (kg/m$^2$) and clinically measured BMI (kg/m$^2$).
We denote the former by $W_j$ and the latter by $X_j$.
From the NHANES as a validation data set of size $m$, we can compute $\eta_j = W_j - X_j$ for each $j=1,\dots,m$.
The Panel Survey of Income Dynamics (PSID) provides self-report-based BMI (kg/m$^2$) and prescription expenses.
We denote the former by $W_j$ and the latter by $Y_j$.
Additional details of these two data sets can be found in Appendix \ref{sec: additional details}. 

Combining the NHANES of size $m$ and the PSID of size $n$, we obtain the generated data $\mathcal{D}_n = \{Y_1,\ldots,Y_n,W_1,\ldots,W_n,\eta_1,\ldots,\eta_m\}$ to which we can apply our method to draw confidence bands for the regression function $g$ of the model $Y = g(X) + \epsilon$ with $\E[\epsilon \mid X,\U] = 0$.
We set $I = [15,35]$ as the interval on which we draw confidence bands.
The kernel function and the bandwidth rule carry over form our simulation studies.
For the sequence $\{c_n\}_{n=1}^\infty$ used for bandwidth choice, we follow the recommendation which we made from our simulation results.
To account for the different medical conditions across ages, we categorize the sample into the following subsamples: (a) male individuals aged 20--34, (b) male individuals aged 35--49, and (c) male individuals aged 50--64.
Details of data processing can be found in Appendix \ref{sec: additional details}. 

\begin{figure}
	\centering
	\begin{tabular}{ccc}
		(a) Men Aged 20--34 & (b) Men Aged 35--49 & (c) Men Aged 50--64\\
		\includegraphics[width=0.32\textwidth]{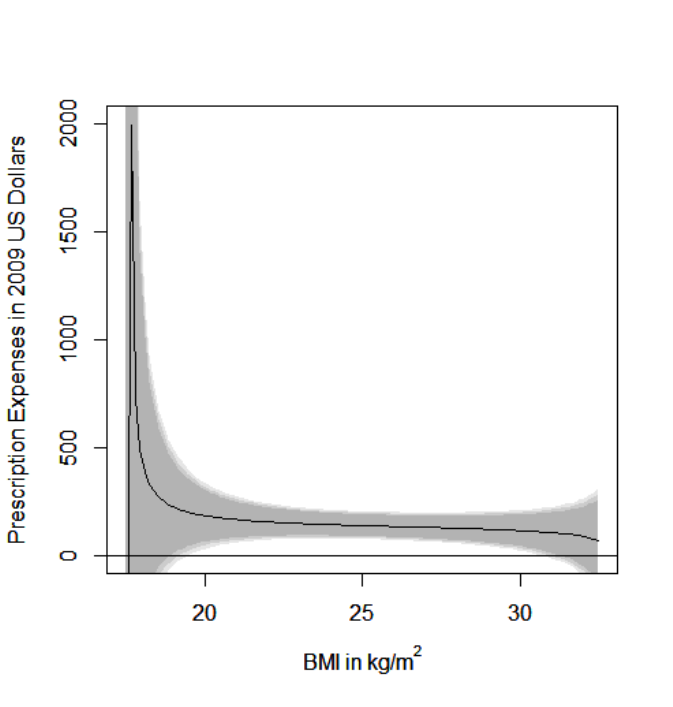} &
		\includegraphics[width=0.32\textwidth]{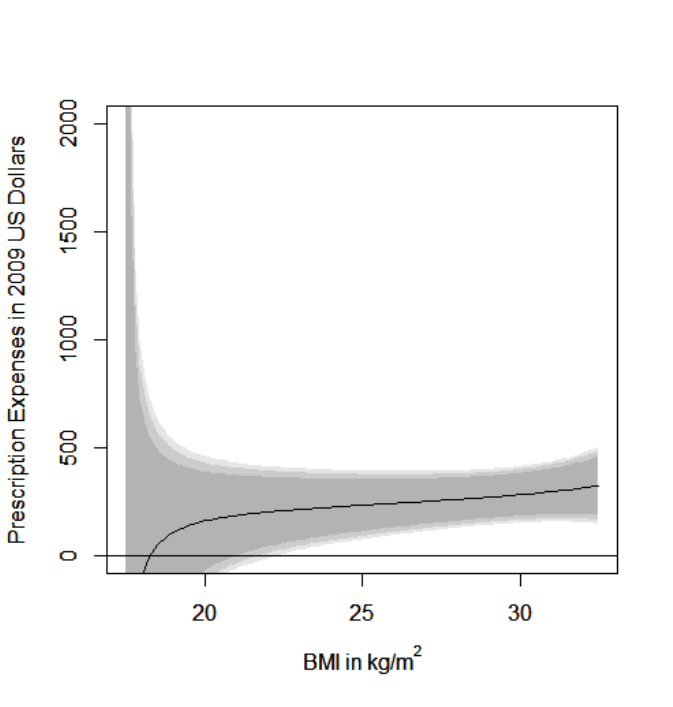} &
		\includegraphics[width=0.32\textwidth]{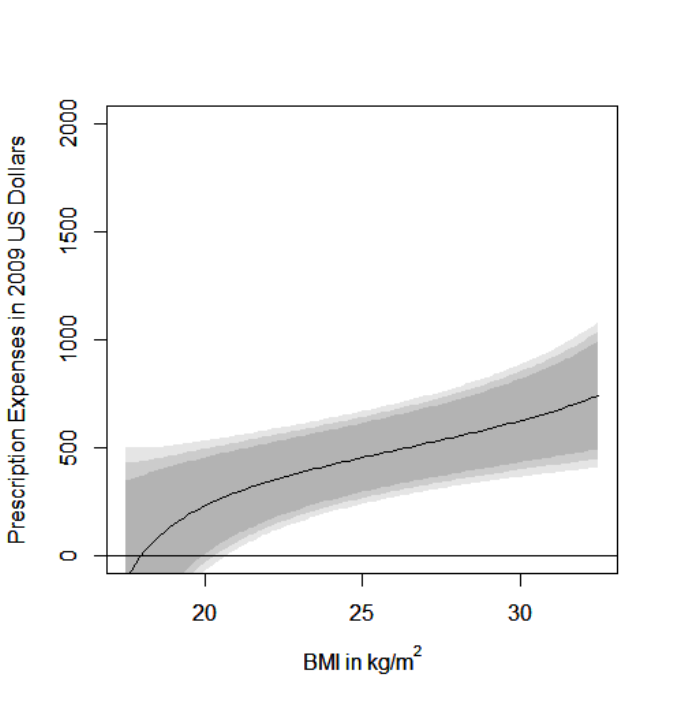}
	\end{tabular}
	\caption{{\small Estimates and confidence bands for the nonparametric regression of prescription expenses on BMI for (a) men aged from 20 to 34, (b) men aged from 35 to 49, and (c) men aged from 50 to 64. The horizontal axes measure the BMI in kg/m$^2$. The vertical axes measure the prescription expenses in 2009 US dollars. The estimates are indicated by solid black curves. The areas shaded by gray-scaled colors indicate 80\%, 90\%, and 95\% confidence bands.}}
	\label{fig:application_prescription}
\end{figure}

Figure \ref{fig:application_prescription} displays estimates and confidence bands for prescription expenses in 2009 US dollars as the dependent variable.
The estimates are indicated by solid black curves.
The areas shaded by gray-scaled colors indicate 80\%, 90\%, and 95\% confidence bands.
The three parts of the figure represent (a) men aged from 20 to 34, (b) men aged from 35 to 49, and (c) men aged from 50 to 64.
We see that the levels of prescription expenses tend to increase in age, as expected.
For the groups (a)--(b) of young men, prescription expenses exhibit little partial correlation with BMI.
For the group (c) of middle aged men, on the other hand, the relations turn into positive ones.
If we look at the 90\% confidence band for the group (c) of men aged from 50 to 64, annual average prescription expenses are
approximately \$0--\$495 if BMI $=20$,
approximately \$266--\$642 if BMI $=25$, and 
approximately \$395--\$849 if BMI $=30$.
These concrete numbers illustrate that confidence bands are useful to make interval predictions of incurred average costs, and this convenient feature has practical values added to the existing methods which only allow for reporting estimates with unknown extents of uncertainties.
We also present results on total medical expenses in Appendix \ref{sec: additional details}. 

Finally, we compare the uniform confidence bands and the pointwise confidence intervals in this application, as we did in the simulation studies.
Figure \ref{fig:application_prescription} displays estimates and pointwise confidence intervals for prescription expenses in 2009 US dollars as the dependent variable.
The estimates are indicated by solid black curves.
The areas shaded by gray-scaled colors indicate 80\%, 90\%, and 95\% confidence intervals.
The three parts of the figure represent (a) men aged from 20 to 34, (b) men aged from 35 to 49, and (c) men aged from 50 to 64.
Compared with the uniform confidence bands displayed in Figure \ref{fig:application_prescription}, notice that the pointwise confidence intervals displayed in Figure \ref{fig:application_prescription} exhibit somewhat shorter intervals.
Given the results of the simulation studies, these pointwise confidence intervals may yield misleading implications about the global shapes of the true regression functions.

\begin{figure}
	\centering
	\begin{tabular}{ccc}
		(a) Men Aged 20--34 & (b) Men Aged 35--49 & (c) Men Aged 50--64\\
		\includegraphics[width=0.32\textwidth]{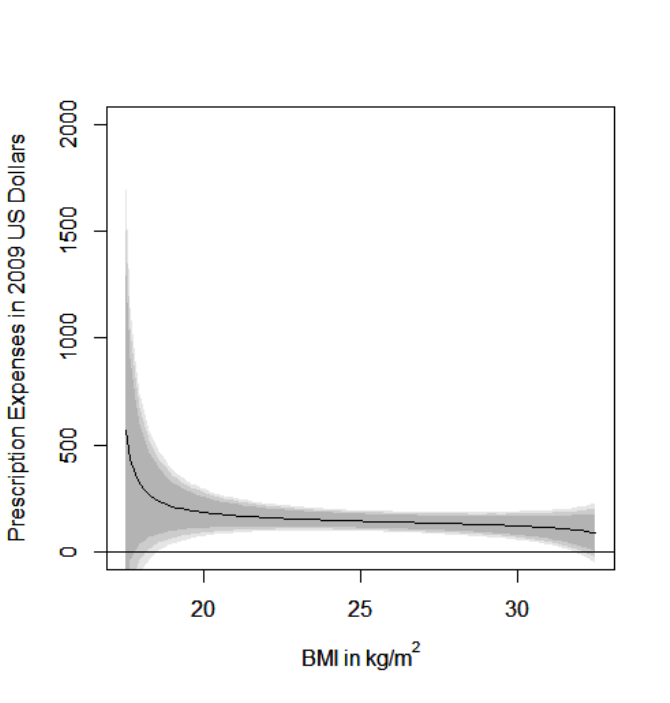} &
		\includegraphics[width=0.32\textwidth]{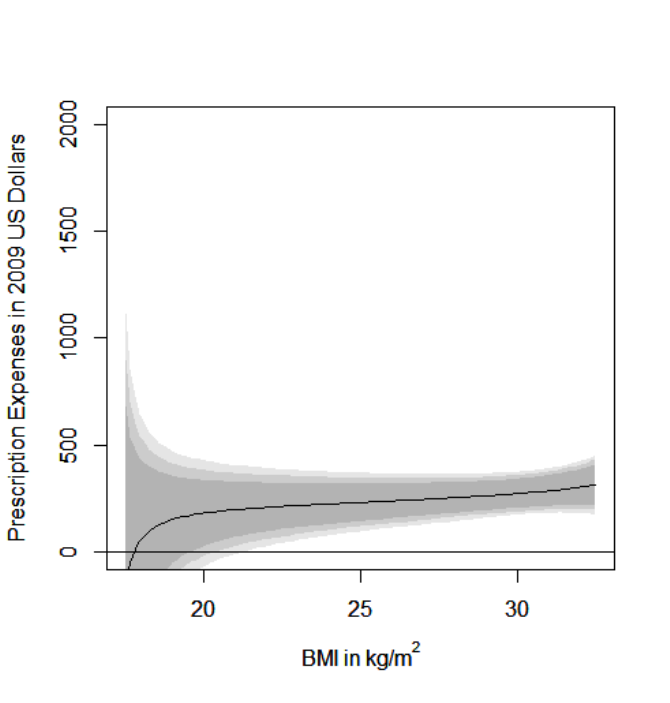} &
		\includegraphics[width=0.32\textwidth]{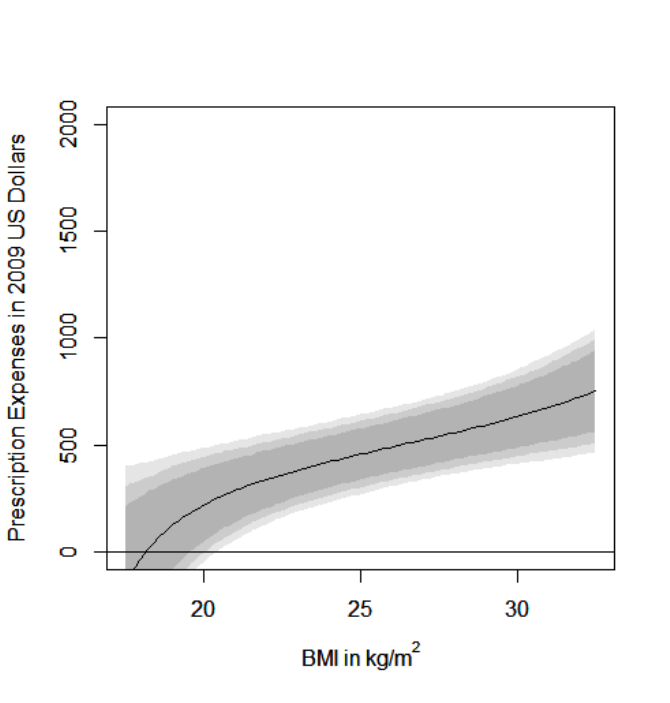}
	\end{tabular}
	\caption{{\small Estimates and pointwise confidence intervals for the nonparametric regression of prescription expenses on BMI for (a) men aged from 20 to 34, (b) men aged from 35 to 49, and (c) men aged from 50 to 64. The horizontal axes measure the BMI in kg/m$^2$. The vertical axes measure the prescription expenses in 2009 US dollars. The estimates are indicated by solid black curves. The areas shaded by gray-scaled colors indicate 80\%, 90\%, and 95\% confidence intervals.}}
	\label{fig:application_prescription_pointwise}
\end{figure}

\section{Extensions}
\label{sec: extensions}

\subsection{Application to specification testing}

The results of the present paper can be used for specification testing of the regression function $g$. 
Suppose that we want to test whether the regression function $g$ belongs to a parametric class $\{ g_{\theta} : \theta \in \Theta \}$ where $\Theta$ is a subset of a metric space (in most cases a Euclidean space). Popular specifications of $g$ are polynomials. In cases where $g$ is a polynomial, it is possible to estimate the coefficients with $\sqrt{n}$-rate under suitable regularity conditions \citep{ChMa85,HaNeIcPo91,ChSc98}. 
Suppose now that $g = g_{\theta}$ for some $\theta \in \Theta$ and $\theta$ can be estimated by $\hat{\theta}$ with a sufficiently fast rate, i.e., $\| g_{\hat{\theta}} - g_{\theta} \|_{I} = o_{\Pr} \{ h_{n}^{-\alpha}\{ nh_{n} \log (1/h_{n}) \}^{-1/2}  \}$, and that Assumption \ref{as: mean} is satisfied with $g = g_{\theta}$. Then it is not difficult to see from the proof of Theorem \ref{thm: validity of MB} that 
\begin{align*}
&\hat{f}_{X}(x) \sqrt{n}h_{n}(\hat{g}(x)-g_{\hat{\theta}}(x))/\hat{s}_{n}(x) \\
&\quad = \hat{f}_{X}(x) \sqrt{n}h_{n}(\hat{g}(x)-g_{\theta}(x))/\hat{s}_{n}(x) + \hat{f}_{X}(x) \sqrt{n}h_{n}(g_{\theta}(x)-g_{\hat{\theta}}(x))/\hat{s}_{n}(x) \\
&\quad =\hat{f}_{X}(x) \sqrt{n}h_{n}(\hat{g}(x)-g_{\theta}(x))/\hat{s}_{n}(x) + o_{\Pr} \{ (\log (1/h_{n}))^{-1/2} \},
\end{align*}
uniformly in $x \in I$, so that $\Pr \{ g_{\hat{\theta}}(x) \notin \hat{\mathcal{C}}_{1-\tau} (x) \ \text{for some} \ x \in I \} \to \tau$.
Therefore, the test that rejects the hypothesis that $g = g_{\theta}$ for some $\theta \in \Theta$ if $g_{\hat{\theta}} (x) \notin \hat{\mathcal{C}}_{1-\tau}(x)$ for some $x \in I$ is asymptotically of level $\tau$. 

%

The literature on specification testing for EIV regression is large. See \cite{HaMa07,So08,OtTa16}, and references therein. However, none of those papers considers $L^{\infty}$-based specification tests. 

\subsection{Additional regressors without measurement errors}

In practical applications, we may have additional regressors $Z$, possibly vector valued, without measurement errors. Suppose that we are interested in estimation and making inference on $g(x,z) = \Ep [ Y \mid X=x,Z=z]$.  In this section we consider two cases: one is fully nonparametric estimation of the conditional expectation $g(x,z)$, and the other is a partially linear model. 

\subsubsection{Fully nonparametric model}
Consider a noparametric regression model
\[
Y = g(X,Z) + \epsilon, \ W = X + U, \ \Ep[\epsilon \mid X,Z,U] = 0,
\]
where $Y,X,\epsilon,W$, and $U$ are univariate,  $Z$ is $d$-dimensional, and $U$ is independent of $(X,Z)$. 
We observe $(Y,W,Z)$, but do not observe $X$. Let $(Y_{1},W_{1},Z_{1}),\dots,(Y_{n},W_{n},Z_{n})$ be i.i.d. observations on $(Y,W,Z)$, and, as before, we assume that there is an independent sample from the measurement error distribution, $\eta_{1},\dots,\eta_{m} \sim f_{U}$ i.i.d. This additional sample may be dependent with $(Y_{1},W_{1},Z_{1}),\dots,(Y_{n},W_{n},Z_{n})$.

If $Z$ is finitely distributed, then $g(x,z)$, where $z$ is a mass point, can be estimated by using
only observations $j$ for which $Z_{j}=z$. So in what follows we focus on the case where $Z$ is continuous, or more precisely, $(X,Z)$ has a joint density $f_{XZ}(x,z)$. In this case, $g(x,z)$ can be estimated by using observations $j$ for which $Z_{j}$ is ``close''
to $z$, which can be implemented by using kernel weights. We will continue to use the same notations $K_{n}(x)$ and $\hat{K}_{n}(x)$ as defined in Section \ref{sec: methodology}. Let $L: \R^{d} \to \R$ be a kernel function (a function that integrates to one) for the $Z$ variable, and consider the estimator $\hat{g}(x,z) = \hat{\mu}(x,z)/\hat{f}_{XZ}(x,z)$, where
\[
\begin{split}
&\hat{\mu}(x,z) = \frac{1}{nh_{n}^{d+1}}\sum_{j=1}^{n}Y_{j}\hat{K}_{n}((x-W_{j})/h_{n})L((z-Z_{j})/h_{n}), \\
&\hat{f}_{XZ}(x,z) = \frac{1}{nh_{n}^{d+1}}\sum_{j=1}^{n} \hat{K}_{n}((x-W_{j})/h_{n})L((z-Z_{j})/h_{n}).
\end{split}
\]
Similarly to the previous case, $\hat{g}(x,z)-g(x,z)$ can be approximated as 
\[
\hat{g}(x,z)-g(x,z) \approx \frac{1}{f_{XZ}(x,z) nh_{n}^{d+1}} \sum_{j=1}^{n} \left [ \{ Y_{j} - g(x,z) \} K_{n}((x-W_{j})/h_{n}) L((z-Z_{j})/h_{n}) - A_{n}(x,z) \right]
\]
with $A_{n}(x,z) = \Ep[\{ Y - g(x,z) \} K_{n}((x-W)/h_{n}) L((z-Z)/h_{n})]$.
So we will modify the multiplier bootstrap process as follows: for $\xi_{1},\dots,\xi_{n} \sim N(0,1)$ i.i.d. independent of the data $\mD_{n}$, set
\[
\hat{\mathsf{Z}}_{n}^{\xi} (x,z)= \frac{1}{\hat{s}_{n}(x,z)\sqrt{n}} \sum_{j=1}^{n} \xi_{j} \{ Y_{j} - \hat{g}(x,z) \} \hat{K}_{n}((x-W_{j})/h_{n})L((z-Z_{j})/h_{n}),
\]
where $\hat{s}_{n}(x,z) = \sqrt{\hat{s}_{n}^{2}(x,z)}$ is defined by 
\[
\hat{s}_{n}^{2}(x,z) = \frac{1}{n}\sum_{j=1}^{n}  \{ Y_{j} - \hat{g}(x,z) \}^{2} \hat{K}_{n}((x-W_{j})/h_{n})L((z-Z_{j})/h_{n}).
\]
Let $I \times J$ be a compact rectangle in $\R \times \R^{d}$ on which the joint density $f_{XZ}$ is bounded away from zero, and compute the conditional $(1-\tau)$-quantile of $\| \hat{\mathsf{Z}}_{n}^{\xi} \|_{I \times J}$ given $\mD_{n}$, denoted by $\hat{c}_{n}(1-\tau)$. Then a confidence confidence band for $g(x,z)$ is given by 
\[
\hat{\mathcal{C}}_{1-\tau}(x,z) = \left [ \hat{g}(x,z) \pm \frac{\hat{s}_{n}(x,z)}{\hat{f}_{XZ}(x,z)\sqrt{n}h_{n}^{d+1}} \hat{c}_{n}(1-\tau) \right ], \ (x,z) \in I \times J.
\]
We make the following assumption, which is analogous to Assumption \ref{as: mean}. Denote by $f_{WZ}(w,z)$ the joint density of $(W,Z)$. 
For a multiindex $\alpha = (\alpha_{1},\alpha_{2},\dots,\alpha_{d+1})$ (a vector of nonnegative integers), define the differential operator 
\[
D^{\alpha} = \frac{\partial^{|\alpha|}}{\partial x^{\alpha_{1}} \partial z_{1}^{\alpha_{2}} \cdots \partial z_{d}^{\alpha_{d+1}}}
\]
with $| \alpha | = \sum_{j=1}^{d+1}\alpha_{j}$. For a given $\beta, B > 0$, let $\Sigma_{d+1} (\beta,B)$ denote the class of functions $f: (x,z) \mapsto f(x,z), \R \times \R^{d} \to \R$ such that $f$ is $k$-times continuously differentiable and 
\[
| D^{\alpha} f (x,z) - D^{\alpha}f(\overline{x},\overline{z})| \le B \| (x,z) - (\overline{x},\overline{z}) \|^{\beta - k}, \ \forall (x,z), (\overline{x},\overline{z}) \in \R \times \R^{d}
\]
for any multiindex $\alpha = (\alpha_{1},\alpha_{2},\dots,\alpha_{d+1})$ with $| \alpha | = k$, where $k$ is the integer such that $k < \beta \le k+1$. 
For $t \in \R^{d+1} = \R \times \R^{d}$, we decompose $t$ as $t = (t_{1},t_{-1}) \in \R \times \R^{d}$ (we use $(t_{1},t_{-1})$ instead of more precise $(t_{1},t_{-1}^{T})^{T}$ for the sake of notational simplicity). 

\begin{assumption}
\label{as: multivariate}
We assume the following conditions. 
\begin{enumerate}
\item[(i)] $\Ep[Y^{4}] < \infty$, the function $w \mapsto \Ep[Y^{2} \mid W=w,Z=z]f_{WZ}(w,z)$ is bounded and continuous, and for each $\ell=1,2$, and the function $w \mapsto \Ep[ |Y|^{2+\ell} \mid W=w,Z=z] f_{WZ}(w,z)$ is bounded.  
\item[(ii)] The functions $\varphi_{XZ}(t) = \Ep[ e^{i(t_{1}X+t_{-1}^{T}Z)}]$ and $\psi_{XZ}(t) = \Ep[ g(X,Z) e^{i(t_{1}X+t_{-1}^{T}Z)} ]$ for $t=(t_{1},t_{-1}) \in \R \times \R^{d}$ are integrable on $\R \times \R^{d}$.
\item[(iii)] Condition (iii) in Assumption \ref{as: mean} holds. 
\item[(iv)] The functions $f_{XZ}$ and $gf_{XZ}$  belong to $\Sigma_{d+1} (\beta,B)$ for some $\beta > (d+1)/2$ and $B > 0$. Let $k$ denote the integer such that $k <\beta\leq k+1$. 
\item[(v)] Condition (v) in Assumption \ref{as: mean} is satisfied for the kernel function $K: \R \to \R$. In addition, let $L: \R^{d} \to \R$ be a $(k+1)$-th order kernel function such that its Fourier transform $\varphi_{L}(t_{-1})$ for $t_{-1} \in \R^{d}$ is supported in $[-1,1]^{d}$, and the function class $\mathcal{L} = \{ z \mapsto L(az + b): a > 0, b \in \R^{d} \}$  is a VC type class in the sense that, for some positive constants $A$ and $v$, 
\[
\sup_{Q} N(\mathcal{L}, \| \cdot \|_{Q}, \delta \| L \|_{\R}) \le (A/\delta)^{v}, \ 0 < \forall \delta \le 1,
\]
where $\sup_{Q}$ is taken over all finitely discrete distributions on $\R^{d}$.\footnote{For a probability measure $Q$ on a measurable space $(S,\mS)$ and a  class of measurable functions $\mF$ on $S$ such that $\mF \subset L^{2}(Q)$, let $N(\mF,\| \cdot \|_{Q,2},\delta)$ denote the $\delta$-covering number for $\mF$ with respect to the $L^{2}(Q)$-seminorm $\| \cdot \|_{Q,2}$.}
\item[(vi)] For all $(x,z) \in I \times J$, $f_{XZ}(x,z) > 0$ and $\Ep[ \{ Y-g(x,z) \}^{2} \mid W=x,Z =z] f_{WZ}(x,z) > 0$.
\item[(vii)] As $n \to \infty$, 
\begin{equation}
\frac{(\log (1/h_{n}))^{2}}{(n \wedge m)h_{n}^{2\alpha+2d+2}}   \to 0, \ \frac{nh_{n}^{d+1} \log (1/h_{n})}{m} \to 0,  \text{ and } h_{n}^{\alpha+\beta} \sqrt{nh_{n}^{d+1} \log (1/h_{n})} \to 0. 
\label{eq: bandwidth}
\end{equation}
\end{enumerate}
\end{assumption}

\begin{theorem}[Validity of multiplier bootstrap confidence band]
\label{thm: validity of MB part 2}
Under Assumption \ref{as: multivariate}, we have 
\[
\Pr \left \{ g(x,z) \in \hat{\mathcal{C}}_{1-\tau}(x,z) \ \forall (x,z) \in I \times J \right \} = 1-\tau+o(1)
\]
as $n \to \infty$. 
The supremum width of the band $\hat{\mathcal{C}}_{1-\tau}$ is $O_{\Pr}\{ h_{n}^{-\alpha} (nh_{n}^{d+1})^{-1/2} \sqrt{\log (1/h_{n})} \}$. 
\end{theorem}

\subsubsection{Partially linear model}

Alternatively, we can consider a partially linear model of the form 
\[
Y = Z^{T}\beta + g(X) + \epsilon, \ W = X + U , \ \E[\epsilon \mid X,Z,U] = 0,
\]
where $Z \in \R^{d}$ is observed without measurement errors (observable is  $(Y,Z,W))$. If we can estimate $\beta$ at a sufficiently fast rate, say, the parametric (i.e., $\sqrt{n}$) rate, by some estimator $\hat{\beta}$, then
we can modify the multiplier bootstrap confidence band by replacing $Y_{j}$ with $Y_{j} - Z_{j}^{T}\hat{\beta}$, and  the asymptotic validity of the modified confidence band will go through under similar regularity conditions to those of Theorem \ref{thm: validity of MB}. 

However, obtaining a $\sqrt{n}$-rate estimator for $\beta$ is challenging in the presence of measurement error in $X$. 
One might be tempted to modify the \cite{Ro88} estimator by simply using the deconvolution kernel, but this naive modification does not work. To understand this, assume first that $X$ can be observed; the idea of \cite{Ro88}  to estimate $\beta$ is 1) to regress $Z$ on $X$ to obtain an estimate of the conditional expectation $m(x) = \E[Z \mid X = x]$ by e.g. a kernel method, and then 2) to regress $Y$ on $Z - m(X)$ with $m(\cdot)$ replaced by its estimate. This leads to a $\sqrt{n}$-rate estimator of $\beta$ since 
\[
Y = (Z-m(X))^{T}\beta + \{ m(X)^{T}\beta + g(X) \} +\epsilon,
\]
and by construction $Z-m(X)$ is orthogonal to $m(X)^{T}\beta + g(X)$. In the presence of measurement error in $X$, one can estimate the conditional mean $m(x) = \E[Z \mid X=x]$ by using the deconvolution kernel, i.e.,
\[
\hat{m}(x) = \frac{(nh_{n})^{-1}\sum_{j=1}^{n}Z_{j}\hat{K}_{n}((x-W_{j})/h_{n})}{(nh_{n})^{-1}\sum_{j=1}^{n}\hat{K}_{n}((x-W_{j})/h_{n})}.
\]
The problem happens in the second step,  because we can not observe $X$ and so there is no way to regress $Y$ on $Z - \hat{m}(X)$. One could naively use $W$ instead of $X$, but then $Z -\hat{m}(W) \approx Z - m(W) = Z - \E[Z \mid X=x]|_{x=W}$, and so regressing $Y$ on $Z - \hat{m}(W)$ leads to an inconsistent estimator of $\beta$. 

One way to circumvent this fundamental difficulty intrinsic to the measurement error model is to make inference on $g$ without estimating $\beta$. This is possible when $Z$ is finitely discrete. 
To gain the intuition, suppose first that $Z$ is binary, $Z \in \{ 0,1 \}$. Then the function $g$ can be identified by 
\[
g(x) = \E[Y \mid Z=0,X=x],
\]
and so we can estimate $g$ by applying the deonvolution kernel method to the subsample $\{ j : Z_{j} = 0 \}$, i.e., 
\[
\hat{g}(x) = \frac{(nh_{n})^{-1}\sum_{j=1}^{n}Y_{j}1(Z_{j} = 0) \hat{K}_{n}((x-W_{j})/h_{n})}{(nh_{n})^{-1}\sum_{j=1}^{n}1(Z_{j} = 0)\hat{K}_{n}((x-W_{j})/h_{n})}.
\]
The multiplier bootstrap confidence band can be constructed by applying the bootstrap process to the subsample $\{ j : Z_{j} = 0 \}$. 
It is not hard to see that asymptotic validity of the resulting confidence band follows if the assumption of Theorem \ref{thm: validity of MB} holds ``conditionally on $Z=0$''. 

Consider a more general situation where $Z$ is possibly multidimensional and finitely discrete, i.e., $Z \in \{ z_{1},\dots,z_{L} \}$. Assume that there exist weights $\lambda_{1},\dots,\lambda_{L} \in \R$ such that $\sum_{\ell=1}^{L} \lambda_{\ell} =1$ and $\sum_{\ell=1}^{L} \lambda_{\ell} z_{\ell} = 0$. For instance if $Z \in \{ 1,2 \}$, then we can choose $\lambda_{1} = 2$ and $\lambda_{2} = -1$. Such weights need not be unique. Then the function $g$ can be identified by 
\[
g(x) = \sum_{\ell=1}^{L} \lambda_{\ell} \E[Y \mid Z=z_{\ell},X=x].
\]
In this case, we can estimate $g(x)$ by
\[
\hat{g}(x) = \frac{(nh_{n})^{-1}\sum_{j=1}^{n}Y_{j}\mathbb{I}(Z_{j})  \hat{K}_{n}((x-W_{j})/h_{n})}{(nh_{n})^{-1}\sum_{j=1}^{n}\mathbb{I}(Z_{j})\hat{K}_{n}((x-W_{j})/h_{n})},
\]
where $\mathbb{I}(z) = \sum_{\ell=1}^{L} \lambda_{\ell} 1(z = z_{\ell})$. The multiplier bootstrap process should be modified accordingly, and asymptotic validity of the resulting confidence band follows if the assumption of Theorem \ref{thm: validity of MB} holds conditionally on $Z = z$ for each $z \in \{ z_{\ell}  : \lambda_{\ell} > 0 \}$. 

\subsection{Confidence bands for conditional distribution functions}

The techniques used to derive confidence bands for the conditional mean in EIV regression can be extended to the conditional distribution function. Suppose now that we are interested in constructing confidence bands for the conditional distribution function $g(y,x) = \Pr (Y \leq y \mid X=x)$
 on a compact rectangle $J \times I$ where $J$ and $I$ are compact intervals, and where we do not observe $X$ but instead observe $W=X+\U$ with $\U$ (measurement error) being independent of $(Y,X)$. 
As before, we assume that in addition to an independent sample $\{ (Y_{1},W_{1}),\dots,(Y_{n},W_{n}) \}$ on $(Y,W)$, there is an independent sample $\{ \eta_{1},\dots,\eta_{m} \}$ from the measurement error distribution. Since $g(y,x) = \Ep[ 1(Y \leq y) \mid X=x]$ where $1(\cdot)$ denotes the indicator function, we may estimate $g(y,x)$ by $\hat{g}(y,x) = \hat{\mu}(y,x)/\hat{f}_{X}(x)$, where  
$\hat{\mu}(y,x) =(nh_{n})^{-1} \sum_{j=1}^{n} 1(Y_{j} \leq y) \hat{K}_{n}((x-W_{j})/h_{n})$. 
To construct a confidence band for $g(y,x)$, we apply the methodology developed in Section \ref{sec: methodology} with $Y_{j}$ replaced by $1(Y_{j} \leq y)$ for each $y$. Let 
$\hat{s}_{n}^{2}(y,x) = \frac{1}{n} \sum_{j=1}^{n} \{ 1(Y_{j} \leq y) - \hat{g}(y,x) \}^{2} \hat{K}_{n}^{2}((x-W_{j})/h_{n})$,
and generate independent standard normal random variables $\xi_{1},\dots,\xi_{n}$ independent of the data $\mD_{n}$. Consider the multiplier process $\hat{\mathsf{Z}}_{n}^{\xi}(y,x) =n^{-1/2} \sum_{j=1}^{n} \xi_{j} \{ 1(Y_{j} \leq y) - \hat{g}(y,x) \} \hat{K}_{n}((x-W_{j})/h_{n})/\hat{s}_{n}(y,x)$, 
and for $\tau \in (0,1)$,  let 
\[
\hat{c}_{n}(1-\tau) = \text{conditional $(1-\tau)$-quantile of $\| \hat{\mathsf{Z}}_{n}^{\xi} \|_{J \times I}$ given $\mD_{n}$}.
\]
Then the resulting confidence band for $g(y,x)$ on $J \times I$ is given by 
\[
\hat{\mC}_{1-\tau}(y,x) = \left [ \hat{g}(y,x) \pm \frac{\hat{s}_{n}(y,x)}{\hat{f}_{X}(x)\sqrt{n} h_{n}} \hat{c}_{n}(1-\tau) \right ], \ (y,x) \in J \times I.
\]
We make the following assumption, which is analogous to Assumption \ref{as: mean}. 

\begin{assumption}
\label{as: cdf} Let $I, J$ be compact intervals in $\R$.
(i) The function $(y,w) \mapsto \Pr(Y \leq y \mid W=w)$ is continuous in $w$ uniformly in $y \in J$. (ii) The characteristic function of $X$, $\varphi_{X}(t) = \Ep[ e^{itX}], t \in \R$, is integrable on $\R$. Furthermore, $\sup_{y \in J} \int_{\R} |\Ep[ g(y,X) e^{itX} ] | dt < \infty$. (iii) Condition (iii) in Assumption \ref{as: mean}. (iv) The functions $f_{X}$ and $g(y,\cdot)f_{X}(\cdot)$ belong to $\Sigma (\beta,B)$ for some $\beta > 1/2$ and $B > 0$ for all $y \in J$. Let $k$ denote the integer such that $k < \beta \leq k+1$. (v) Condition (v) in Assumption \ref{as: mean}. (vi) For all $x \in I$, $f_{X}(x) > 0$, and $\inf_{(y,x) \in J \times I} \Ep[ \{ 1(Y \leq y) - g(y,x) \}^{2} \mid W=x] f_{W}(x) > 0$. (vii) Condition (vii) in Assumption \ref{as: mean}.
\end{assumption}

\begin{theorem}
\label{thm: cdf}
Under Assumption \ref{as: cdf}, $\Pr \{ g(y,x) \in \hat{\mC}_{1-\tau}(y,x) \ \forall (y,x) \in J \times I\} \to 1-\tau$ as $n \to \infty$. 
Furthermore, the supremum width of the band $\hat{\mC}_{1-\tau}$ is $O_{\Pr}\{ h_{n}^{-\alpha}(nh_{n})^{-1/2}\sqrt{\log(1/h_{n})}\}$.
\end{theorem}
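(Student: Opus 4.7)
The plan is to reduce Theorem~\ref{thm: cdf} to the machinery already developed for Theorems~\ref{thm: Gaussian approximation}--\ref{thm: validity of MB}, by replacing the response $Y_{j}$ with the indicator $1(Y_{j} \leq y)$ and treating $y \in J$ as an additional index. Since indicators are uniformly bounded by $1$, the moment conditions of Assumption~\ref{as: mean}(i) are trivially satisfied, while Assumption~\ref{as: cdf}(i) supplies the continuity of $w \mapsto \Pr(Y \leq y \mid W=w) f_{W}(w)$ uniformly in $y \in J$ needed to mimic the scalar case. The remaining conditions of Assumption~\ref{as: cdf} are direct analogues of Assumption~\ref{as: mean}.

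Concretely, I would introduce the deviation process on $J \times I$,
\[
\hat{Z}_{n}(y,x) = \frac{\hat{f}_{X}(x) \sqrt{n} h_{n} (\hat{g}(y,x) - g(y,x))}{s_{n}(y,x)}, \quad s_{n}^{2}(y,x) = \Var \bigl( \{ 1(Y \leq y) - g(y,x) \} K_{n}((x-W)/h_{n}) \bigr),
\]
together with its idealized centered version $Z_{n}^{*}(y,x)$ defined analogously to Section~\ref{sec: methodology}. The steps in the proof of Theorem~\ref{thm: Gaussian approximation} then go through with only cosmetic changes: (a) the bias bound is identical under Assumption~\ref{as: cdf}(ii)--(iv); (b) uniform closeness of $\hat{Z}_{n}$ to $Z_{n}^{*}$ over $J \times I$ reduces to the same uniform estimates on $\hat{f}_{X}-f_{X}$ and $\hat{K}_{n}-K_{n}$ (which do not depend on $y$); and (c) the intermediate Gaussian approximation of \cite{ChChKa14a, ChChKa16} is applied to the function class
\[
\mF_{n} = \left\{ (y,w) \mapsto \frac{\{ 1(y \leq u) - g(u,x) \} K_{n}((x-w)/h_{n})}{s_{n}(u,x)} : (u,x) \in J \times I \right\}.
\]

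The main obstacle is controlling the uniform covering numbers of $\mF_{n}$, since it is now indexed by a two-dimensional parameter. The key observation is that $\{ 1(\cdot \leq u) : u \in J \}$ is a VC class of index~$2$ and so has polynomial uniform entropy, while $\{ K_{n}((x-\cdot)/h_{n}) : x \in I \}$ admits polynomial uniform covering numbers by the Lipschitz argument already used for Theorem~\ref{thm: Gaussian approximation}, since $|K_{n}'| \lesssim h_{n}^{-\alpha-1}$ on bounded sets. Division by $s_{n}(u,x)$, which is bounded below uniformly at rate $h_{n}^{(1-2\alpha)/2}$ under Assumption~\ref{as: cdf}(vi) via an analogue of Lemma~\ref{lem: moment bound}, preserves the bound. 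Hence $\mF_{n}$ inherits a uniform $L^{2}$-entropy bound of the same order (up to at most an extra $\log(1/h_{n})$ factor) as in the scalar case, so that the bandwidth conditions in (\ref{eq: bandwidth}) continue to deliver the Gaussian approximation and, by the conditional CLT of \cite{ChChKa16} applied to $\mF_{n}$, the multiplier bootstrap approximation
\[
\sup_{z \in \R} \left| \Pr \{ \| \hat{Z}_{n}^{\xi} \|_{J \times I} \leq z \mid \mD_{n} \} - \Pr \{ \| Z_{n}^{G} \|_{J \times I} \leq z \} \right| \stackrel{\Pr}{\to} 0.
\]

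Gaussian anti-concentration then converts this into the coverage statement $\Pr \{ g(y,x) \in \hat{\mC}_{1-\tau}(y,x) \ \forall (y,x) \in J \times I \} \to 1-\tau$. For the width bound, the boundedness of indicators yields $\hat{s}_{n}(y,x)/\hat{f}_{X}(x) = O_{\Pr}(h_{n}^{1/2-\alpha})$ uniformly, and $\hat{c}_{n}(1-\tau) = O_{\Pr}(\sqrt{\log(1/h_{n})})$ follows from standard tail bounds for Gaussian suprema over a class with the entropy computed above, giving the claimed rate $O_{\Pr}\{ h_{n}^{-\alpha}(nh_{n})^{-1/2}\sqrt{\log(1/h_{n})}\}$.
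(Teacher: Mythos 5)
Your proposal is correct and follows essentially the same route as the paper, which itself only remarks that the proof is ``completely analogous'' to those of Theorems \ref{thm: Gaussian approximation} and \ref{thm: validity of MB} once one notes $g(y,x)=\Ep[1(Y\leq y)\mid X=x]$ and that $\{1(\cdot\leq y): y\in J\}$ is a VC class. Your expansion of the omitted details (the product-class entropy bound, the uniform variance lower bound from Assumption \ref{as: cdf}(vi), and the application of the coupling and multiplier results over the index set $J\times I$) matches what the paper intends.
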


Theorem \ref{thm: cdf} is also a new result. 
\section{Conclusion}
\label{sec: conclusion}

In this paper, we develop a method to construct uniform confidence bands for nonparametric EIV regression function $g$. 
We consider the practically relevant case where the distribution of the measurement error is unknown.
We assume that there is an independent sample from the measurement error distribution, where the sample from the measurement error distribution need not be independent from the sample on response and predictor variables. 
Such a sample from the measurement error distribution is available if there is, for example, either 1) validation data or 2) repeated measurements on the latent predictor variable with measurement errors, one of which is symmetrically distributed. 
We establish asymptotic validity of the proposed confidence band for ordinary smooth measurement error densities, showing that the proposed confidence band contains the true regression function with probability approaching the nominal coverage probability. 
To the best of our knowledge, this is the first paper to derive asymptotically valid uniform confidence bands for nonparametric EIV regression. 
We also suggest a two-step SIMEX method for coverage-probability optimal
bandwidth choice.
Simulation studies verify the finite sample performance of the proposed confidence band. 
Finally, we discuss extensions of our results to specification testing, cases with additional regressors without measurement errors, and confidence bands for conditional distribution functions. 

There are a couple of directions for future research.
First, our bootstrap procedure exploits only the first order, and improvements may be possible by exploiting higher orders.
For a heuristic investigation, we ran additional simulations of a method that bootstraps $s_n(x)$ and $\widehat\varphi_U(t)$, in addition to simulations of our method. 
These additional simulations turned out exhibit slight improvements for some data generating models.
A theoretical investigation of such improvements deserves a topic of future research.
Second, as mentioned in Remark \ref{remark:simex}, the data-driven SIMEX bandwidth selection procedure \citep{ DeHaJa15} has not been formally proven to be consistent with theoretical requirements of inference methods even under the simpler settings covered in the existing literature.
A theoretical investigation of this practical issue also deserves another topic of future research.

\appendix

\section{Proofs}
\label{sec: proof}

\subsection{Technical tools}
In this section, we collect technical tools that will be used in the proofs of Theorems \ref{thm: Gaussian approximation} and \ref{thm: validity of MB}. 
The proofs rely on modern empirical process theory. For a probability measure $Q$ on a measurable space $(S,\mS)$ and a  class of measurable functions $\mF$ on $S$ such that $\mF \subset L^{2}(Q)$, let $N(\mF,\| \cdot \|_{Q,2},\delta)$ denote the $\delta$-covering number for $\mF$ with respect to the $L^{2}(Q)$-seminorm $\| \cdot \|_{Q,2}$.
The class $\mF$ is said to be pointwise measurable if there exists a countable subclass $\mG \subset \mF$ such that for every $f \in \mF$ there exists a sequence $g_{m} \in \mG$ with $g_{m} \to f$ pointwise. 
A function $F: S \to [0,\infty)$ is said to be an envelope for $\mF$ if $F(x) \geq \sup_{f \in \mF} |f(x)|$ for all $x \in S$. 
See Section 2.1 in \cite{vaWe96} for details.

\begin{lemma}[A useful maximal inequality]
\label{lem: maximal inequality}
Let $X,X_{1},\dots,X_{n}$ be i.i.d. random variables taking values in a measurable space $(S,\mathcal{S})$, and let $\mF$ be a pointwise measurable class of (measurable) real-valued functions on $S$ with measurable envelope $F$. Suppose that there exist constants $A \geq e$ and $V \geq 1$ such that
\[
\sup_{Q} N(\mF,\| \cdot \|_{Q,2},\delta \| F \|_{Q,2}) \leq (A/\delta)^{V}, \ 0 < \forall \delta \leq 1,
\]
where $\sup_{Q}$ is taken over all finitely discrete distributions on $S$.
Furthermore, suppose that $0 < \Ep[F^{2}(X)] < \infty$, and let $\sigma^{2} > 0$ be any positive constant such that $\sup_{f \in \mF} \Ep[f^{2}(X)] \leq \sigma^{2} \leq \Ep[F^{2}(X)]$. 
Define $B= \sqrt{\E[ \max_{1 \leq j \leq n} F^{2}(X_{j}) ]}$.  Then
\begin{align*}
&\E \left [ \left \|  \frac{1}{\sqrt{n}} \sum_{j=1}^{n} \{ f(X_{j}) - \Ep[ f(X) ] \} \right \|_{\mF} \right ] \\
&\quad \leq C \left [  \sqrt{V\sigma^{2} \log \left ( \frac{A \sqrt{\Ep[F^{2}(X)]}}{\sigma} \right ) } + \frac{VB}{\sqrt{n}} \log \left ( \frac{A \sqrt{\Ep[F^{2}(X)]}}{\sigma} \right ) \right ],
\end{align*}
 where $C > 0$ is a universal constant. 
\end{lemma}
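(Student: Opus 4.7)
The plan is to prove this VC-type maximal inequality by the standard three-step route of (i) symmetrization, (ii) Dudley's entropy integral applied to the Rademacher process conditional on the data, and (iii) a de-conditioning argument that closes a self-bounded (quadratic) inequality. The overall structure follows Corollary 5.1 of Chernozhukov--Chetverikov--Kato (Ann.\ Statist., 2014), which optimizes the classical bounds of van der Vaart--Wellner and Einmahl--Mason.

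Let $\mathbb{G}_n f = n^{-1/2}\sum_j \{f(X_j) - \Ep f(X)\}$ and introduce independent Rademacher signs $\epsilon_1,\dots,\epsilon_n$ independent of $(X_j)$; by the symmetrization inequality (van der Vaart--Wellner, Lemma 2.3.1), $\Ep\|\mathbb{G}_n\|_{\mathcal{F}} \leq 2\,\Ep\|R_n\|_{\mathcal{F}}$ with $R_n f := n^{-1/2}\sum_j \epsilon_j f(X_j)$. Conditionally on $(X_j)$, $R_n$ is sub-Gaussian for the random seminorm $\|f\|_{P_n,2}$, so Dudley's entropy bound together with the VC covering hypothesis gives
\[
\Ep_{\epsilon}\|R_n\|_{\mathcal{F}} \leq C\int_0^{\sigma_n}\sqrt{V\log(A\|F\|_{P_n,2}/\tau)}\,d\tau \leq C'\,\sigma_n \sqrt{V\log(A\|F\|_{P_n,2}/\sigma_n)},
\]
where $\sigma_n := \sup_{f\in\mathcal{F}}\|f\|_{P_n,2}$ and $C,C'$ are universal.

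For de-conditioning, take expectation: by concavity of $x\mapsto x\log(a/x)$ and Jensen's inequality,
\[
\Ep\|R_n\|_{\mathcal{F}} \leq C''\sqrt{V\,\Ep[\sigma_n^2]\,\log\bigl(A\sqrt{\Ep[F^2(X)]/\Ep[\sigma_n^2]}\bigr)}.
\]
Write $\Ep[\sigma_n^2] \leq \sigma^2 + \Ep\|(P_n-P)f^2\|_{\mathcal{F}}$ and control the residual by symmetrizing once more and invoking the Ledoux--Talagrand contraction: conditionally on $(X_j)$, the map $t\mapsto t^2/(2M_n)$ is $1$-Lipschitz on $[-M_n,M_n]$ and vanishes at $0$, with $M_n := \max_j F(X_j)$, so
\[
\Ep_{\epsilon}\Bigl\|n^{-1/2}\sum_j \epsilon_j f^2(X_j)\Bigr\|_{\mathcal{F}} \leq 2M_n\,\Ep_{\epsilon}\|R_n\|_{\mathcal{F}}.
\]
Taking expectation, applying Cauchy--Schwarz and a Hoffmann--J\o rgensen moment bound yields $\Ep\|(P_n-P)f^2\|_{\mathcal{F}} \leq C_1 B(\Ep\|R_n\|_{\mathcal{F}} + B)/\sqrt{n}$. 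Substituting produces a self-bounding inequality $x \leq C_2\sqrt{VL(\sigma^2 + Bx/\sqrt{n} + B^2/n)}$ for $x := \Ep\|R_n\|_{\mathcal{F}}$ and $L := \log(A\sqrt{\Ep[F^2(X)]}/\sigma)$, which, solved via the elementary implication $x\leq u+v\sqrt{x}\Longrightarrow x\leq 2u+2v^2$, gives $x \lesssim \sqrt{V\sigma^2 L} + VBL/\sqrt{n}$, as required.

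The main obstacle lies in the de-conditioning step: the naive replacement $\sigma_n \leadsto \|F\|_{L^2(P)}$ would only produce the ``global'' rate $\sqrt{V\,\Ep[F^2(X)]L}$, losing the \emph{local} variance factor $\sigma^2$ on which the usefulness of the lemma rests. Recovering $\sigma$ in the leading term forces one to bound $\Ep[\sigma_n^2]$ in terms of $\sigma^2$ plus the residual $\Ep\|(P_n-P)f^2\|_{\mathcal{F}}$ and then to reduce the latter back to a multiple of $\Ep\|R_n\|_{\mathcal{F}}$ via contraction. Since $F$ is not assumed bounded in sup-norm, the contraction constant is the \emph{random} quantity $M_n$, which is precisely why the envelope $B = \sqrt{\Ep[\max_j F^2(X_j)]}$ --- rather than $\|F\|_\infty$ --- enters the second term of the final bound. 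Once the recursion closes, what remains is routine algebra of entropy integrals and quadratic inequalities.
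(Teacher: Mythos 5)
The paper does not actually prove this lemma; it simply cites Corollary 5.1 of Chernozhukov, Chetverikov and Kato (2014a), and your argument is a correct reconstruction of the proof of that cited result along the same route: symmetrization, the conditional Dudley entropy bound, de-conditioning via the Ledoux--Talagrand contraction with the random Lipschitz constant $M_n=\max_j F(X_j)$ (which is exactly why $B$ rather than $\|F\|_\infty$ appears), and a Hoffmann--J\o rgensen plus self-bounding closure. The only blemish is the intermediate display $C_1 B(\Ep\|R_n\|_{\mF}+B)/\sqrt{n}$, whose last term should read $B^2/n$ rather than $B^2/\sqrt{n}$, but your subsequent self-bounding inequality already carries the correct $B^2/n$, so the conclusion is unaffected.
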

\begin{proof}
See Corollary 5.1 in \cite{ChChKa14a}.
\end{proof}

\begin{lemma}[An auxiliary maximal inequality]
\label{lem: moment inequality}
Let $\zeta_{1},\dots,\zeta_{n}$ be random variables such that $\Ep [|\zeta_{j}|^{r}] < \infty$ for all $j=1,\dots,n$ for some $r \geq 1$. Then 
\[
\Ep\left[ \max_{1 \leq j \leq n} | \zeta_{j} | \right ] \leq n^{1/r} \max_{1 \leq j \leq n} ( \Ep [|\zeta_{j}|^{r}] )^{1/r}. 
\]
\end{lemma}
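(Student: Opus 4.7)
The plan is to prove this by combining two standard and elementary inequalities: Jensen's inequality (applied to the concave function $x \mapsto x^{1/r}$ on $[0,\infty)$, which is concave since $r \geq 1$) and the trivial bound of a maximum by a sum.

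First I would write
\[
\Ep\left[ \max_{1 \leq j \leq n} | \zeta_{j} | \right ] = \Ep\left[ \Big( \max_{1 \leq j \leq n} | \zeta_{j} |^{r} \Big)^{1/r} \right ] \leq \left( \Ep\left[ \max_{1 \leq j \leq n} | \zeta_{j} |^{r} \right ] \right)^{1/r},
\]
where the inequality is Jensen's inequality applied to the concave map $x \mapsto x^{1/r}$ (valid for $r \geq 1$ on $[0,\infty)$). Next I would use the pointwise bound $\max_{1 \leq j \leq n} |\zeta_j|^r \leq \sum_{j=1}^n |\zeta_j|^r$, take expectations, and bound the sum by $n$ times its maximum term:
\[
\Ep\left[ \max_{1 \leq j \leq n} | \zeta_{j} |^{r} \right ] \leq \sum_{j=1}^{n} \Ep[|\zeta_{j}|^{r}] \leq n \max_{1 \leq j \leq n} \Ep[|\zeta_{j}|^{r}].
\]
Combining the two displays yields the claimed inequality after taking the $r$-th root.

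There is essentially no obstacle here; the only thing worth noting is that the assumption $r \geq 1$ is exactly what is needed for $x \mapsto x^{1/r}$ to be concave, and the hypothesis $\Ep[|\zeta_j|^r] < \infty$ for all $j$ ensures the right-hand side is finite so that all quantities are well-defined. No measurability, independence, or moment structure beyond what is stated is required.
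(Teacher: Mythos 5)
Your proof is correct and is essentially identical to the paper's: both apply Jensen's inequality to the concave map $x \mapsto x^{1/r}$, then bound the maximum of $|\zeta_j|^r$ by the sum and the sum by $n$ times its largest term. No differences worth noting.
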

\begin{proof}
This inequality is well known, and follows from Jensen's inequality. Indeed, $\Ep[ \max_{1 \leq j \leq n} |\zeta_{j}|] \leq (\Ep[\max_{1 \leq j \leq n}|\zeta_{j}|^{r}])^{1/r} \leq (\sum_{j=1}^{n} \Ep [|\zeta_{j}|^{r}])^{1/r} \leq n^{1/r} \max_{1 \leq j \leq n} ( \Ep [|\zeta_{j}|^{r}] )^{1/r}$. 
\end{proof}

The following \textit{anti-concentration} inequality for the supremum of a Gaussian process will play a crucial role in the proofs of Theorems \ref{thm: Gaussian approximation} and \ref{thm: validity of MB}. 

\begin{lemma}[Anti-concentration for the supremum of a Gaussian process]
\label{lem: anti-concentration}
Let $T$ be a nonempty set, and let  $X=(X_{t} : t  \in T)$ be a tight Gaussian random variable in $\ell^{\infty}(T)$ with mean zero and $\Ep[ X_{t}^{2}] = 1$ for all $t \in T$. Then for any $h > 0$, 
\[
\sup_{x \in \R} \Pr \{ | \| X \|_{T} - x | \leq h \} \leq 4 h ( 1+ \Ep[ \| X \|_{T} ]).
\]
\end{lemma}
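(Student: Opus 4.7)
The plan is to reduce the statement to an anti-concentration bound for the maximum of a finite-dimensional centered Gaussian vector with unit marginal variances, and then to invoke that bound (which is of Nazarov type).

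For the reduction, tightness of $X$ in $\ell^{\infty}(T)$ yields a countable subset $T_{0} \subset T$ such that $\| X \|_{T} = \sup_{t \in T_{0}} |X_{t}|$ almost surely. Enumerate $T_{0} = \{ t_{1}, t_{2},\ldots \}$ and set $T_{n} = \{ t_{1},\ldots, t_{n} \}$; then $\| X \|_{T_{n}} \uparrow \| X \|_{T}$ almost surely and $\Ep [\| X \|_{T_{n}}] \uparrow \Ep [\| X \|_{T}]$ by monotone convergence. Granting the lemma for each finite $T_{n}$, for any $0 < h' < h$ the set inclusion $\{ x-h' < \| X \|_{T} \leq x+h' \} \subseteq \liminf_{n}\{ x-h' < \| X \|_{T_{n}} \leq x+h' \}$ (which follows from $\| X \|_{T_{n}} \uparrow \| X \|_{T}$) combined with Fatou's lemma gives
\[
\Pr \{ x-h' < \| X \|_{T} \leq x+h' \} \leq \liminf_{n \to \infty} \Pr \{ x-h' < \| X \|_{T_{n}} \leq x+h' \} \leq 4 h' \bigl( 1 + \Ep [ \| X \|_{T}] \bigr),
\]
and sending $h' \uparrow h$ produces the asserted bound on $\Pr \{ | \| X \|_{T} - x | \leq h \}$ via monotone convergence on the expanding events.

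For finite $T = \{ t_{1},\ldots,t_{n} \}$, note that $\| X \|_{T} = \max_{1 \leq k \leq 2n} Y_{k}$, where $(Y_{1},\ldots,Y_{2n}) = (X_{t_{1}},\ldots,X_{t_{n}},-X_{t_{1}},\ldots,-X_{t_{n}})$ is a centered Gaussian vector whose every coordinate has variance one. It therefore suffices to show that, for any centered Gaussian vector $(Y_{1},\ldots,Y_{N})$ with unit marginal variances,
\[
\sup_{x \in \R} \Pr \Bigl\{ \bigl| \max\nolimits_{1 \leq k \leq N} Y_{k} - x \bigr| \leq h \Bigr\} \leq 4 h \bigl( 1 + \Ep [ \max\nolimits_{1 \leq k \leq N} Y_{k}] \bigr).
\]
This is (a slight weakening of) Nazarov's anti-concentration inequality, obtained by differentiating $x \mapsto \Pr \{ \max_{k} Y_{k} \leq x \}$ and bounding the resulting density via the Gaussian isoperimetric inequality, the bound involving only the minimum coordinate standard deviation (which equals $1$ here) and $\Ep [ \max_{k} Y_{k}]$.

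The main obstacle is the finite-dimensional anti-concentration bound in the last display; the reduction to a finite index set is a routine separability and monotone-convergence argument. Since this finite-dimensional inequality has already been established in the \cite{ChChKa14a,ChChKa14b} references on which this paper relies, the proof of the lemma amounts to carrying out the two reduction steps described above.
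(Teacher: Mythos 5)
Your proposal is correct and follows essentially the same route as the paper: the paper's proof is simply a citation of Corollary 2.1 in \cite{ChChKa14b} (see also Theorem 3 in \cite{ChChKa15}), and that corollary is established exactly by the reduction you describe --- passing to finite sub-index-sets via separability and monotone convergence, doubling the index set to convert $\sup_{t}|X_{t}|$ into a one-sided maximum, and invoking the finite-dimensional Nazarov-type anti-concentration bound for centered Gaussian vectors with unit variances. The only cosmetic point is that taking $h' \uparrow h$ only covers the open interval $(x-h,x+h)$; to include the endpoints, apply the bound with $h'' > h$ and let $h'' \downarrow h$, which is immediate.
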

\begin{proof}
See Corollary 2.1 in \cite{ChChKa14b}; see also Theorem 3 in \cite{ChChKa15}. 
\end{proof}

\subsection{Proof of Theorem \ref{thm: Gaussian approximation}}
In what follows, we always assume Assumption \ref{as: mean}.
We first prove some preliminary lemmas.
Recall that $A_{n}(x) = \Ep [\{ Y -g(x) \} K_{n}((x-W)/h_{n})]$ and $s_{n}^{2}(x) = \Var (\{ Y -g(x) \} K_{n}((x-W)/h_{n}))$. Observe that $\| K_{n} \|_{\R} = O(h_{n}^{-\alpha})$ under our assumption. 
In what follows, the notation $\lesssim$ signifies that the left hand side is bounded by the right hand side up to a positive constant independent of $n$ and $x$.

\begin{lemma}
\label{lem: moment bound}
The following bounds hold: (i) $\| A_{n} \|_{I} = O(h_{n}^{\beta+1})$. (ii) For sufficiently large $n$, $\inf_{x \in I} s_{n}^{2}(x) \gtrsim h_{n}^{-2\alpha+1}$. (iii) For $\ell=0,1,2$, we have $\sup_{x \in \R}\Ep[ |YK_{n}((x-W)/h_{n})|^{2+\ell}] = O(h_{n}^{-(2+\ell)\alpha+1})$. 
\end{lemma}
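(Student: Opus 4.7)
All three parts rest on Fourier-analytic estimates for the deconvolution kernel $K_n$. I begin by recording the two basic estimates: from the $L^1$ bound $\|K_n\|_\infty \leq (2\pi)^{-1}\int |\varphi_K(t)|/|\varphi_\varepsilon(t/h_n)|\,dt = O(h_n^{-\alpha})$ (using Assumption (iii), together with continuity and nonvanishing of $\varphi_\varepsilon$ on $[-1,1]$), and from Plancherel $\int K_n^2 = (2\pi)^{-1}\int |\varphi_K(t)|^2/|\varphi_\varepsilon(t/h_n)|^2\,dt \asymp h_n^{-2\alpha}$. Interpolating yields $\int |K_n|^{2+\ell} \leq \|K_n\|_\infty^\ell \int K_n^2 = O(h_n^{-(2+\ell)\alpha})$. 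Part (iii) then follows by iterated expectation: $\E[|Y K_n((x-W)/h_n)|^{2+\ell}] = \int \E[|Y|^{2+\ell}\mid W=w]f_W(w)\, |K_n((x-w)/h_n)|^{2+\ell}\,dw$, and Assumption (i) bounds the prefactor uniformly in $w$, so this is $\lesssim h_n\int |K_n|^{2+\ell} = O(h_n^{-(2+\ell)\alpha+1})$ uniformly in $x$.

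For Part (i), I use $\E[U\mid X,\varepsilon] = 0$ to rewrite $A_n(x) = \E[(g(X) - g(x))K_n((x-W)/h_n)]$. A direct Fourier computation using the definition of $K_n$ and the independence of $X$ and $\varepsilon$ produces the clean identity $h_n^{-1}\E[g(X)K_n((x-W)/h_n)] = (K_{h_n}\ast(gf_X))(x)$, and likewise $h_n^{-1}\E[K_n((x-W)/h_n)] = (K_{h_n}\ast f_X)(x)$, where $K_{h_n}(y) = h_n^{-1}K(y/h_n)$. Since $K$ is a $(k+1)$-th order kernel (condition (v) gives $\int u^\ell K(u)\,du = 0$ for $\ell=1,\ldots,k$ and $\int |u|^\beta |K(u)|\,du < \infty$) and $f_X, gf_X \in \Sigma(\beta,B)$, Taylor expansion with H\"older remainder yields $(K_{h_n}\ast f)(x) = f(x) + O(h_n^\beta)$ uniformly in $x$ for both $f = f_X$ and $f = gf_X$. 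Subtracting, $h_n^{-1}A_n(x) = [(gf_X)(x) - g(x)f_X(x)] + O(h_n^\beta) = O(h_n^\beta)$ on $I$ (using $\|g\|_I < \infty$), giving $\|A_n\|_I = O(h_n^{\beta+1})$.

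For Part (ii), Part (i) already gives $A_n^2(x) = O(h_n^{2\beta+2}) = o(h_n^{-2\alpha+1})$, so it suffices to show $\E[(Y-g(x))^2 K_n^2((x-W)/h_n)] \gtrsim h_n^{-2\alpha+1}$ uniformly on $I$. By iterated expectation this equals $\int \sigma_x^2(w)K_n^2((x-w)/h_n)\,dw$, where $\sigma_x^2(w) := \E[(Y-g(x))^2\mid W=w]f_W(w)$. Assumption (vi) gives $\sigma_x^2(x) \geq c_0 > 0$ uniformly in $x\in I$. Decomposing $\sigma_x^2(w) = \mu_2(w) - 2g(x)\mu_1(w) + g(x)^2 f_W(w)$ with $\mu_2(w) = \E[Y^2\mid W=w]f_W(w)$, $\mu_1 = (gf_X)\ast f_\varepsilon$, and $f_W = f_X\ast f_\varepsilon$, each of $\mu_2, \mu_1, f_W$ is bounded and continuous on $\R$ (Assumption (i) for $\mu_2$; Fourier inversion under Assumption (ii) together with $|\varphi_\varepsilon|\leq 1$ for the others). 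Combined with $\|g\|_I < \infty$, this makes $\sigma_x^2(w)$ uniformly continuous in $w$ on any bounded set, uniformly in $x\in I$.

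The principal technical obstacle is to localize the $L^2$ mass of $K_n$ to a fixed bounded interval, so that the pointwise positivity of $\sigma_x^2(x)$ translates into a lower bound on the whole integral. I will prove the pointwise decay $|K_n(u)| \lesssim h_n^{-\alpha}/(1+|u|)$ via a single integration by parts in $K_n(u) = (2\pi)^{-1}\int_{-1}^{1}e^{-itu}\varphi_K(t)/\varphi_\varepsilon(t/h_n)\,dt$: the boundary terms vanish since $\varphi_K \in C^{k+3}$ with support $[-1,1]$ forces $\varphi_K$ and its derivatives to vanish at $\pm 1$, and the $L^1$-norm of the derivative of the integrand is $O(h_n^{-\alpha})$ by Assumption (iii) (using both the lower bound on $|\varphi_\varepsilon|$ and $|\varphi_\varepsilon'(t)|\leq C_1 |t|^{-\alpha-1}$ for $|t|\geq 1$, plus $C^1$-regularity of $\varphi_\varepsilon$ near the origin). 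Combined with $\int K_n^2 \gtrsim h_n^{-2\alpha}$ (Plancherel plus the upper bound on $|\varphi_\varepsilon|$), this gives $\int_{|u|\leq M}K_n^2 \geq (c_2/2)h_n^{-2\alpha}$ for some fixed $M$. Uniform continuity then gives $\sigma_x^2(x - h_n u) \geq c_0/2$ for $|u|\leq M$ and all large $n$, so, changing variables, $\int \sigma_x^2(w)K_n^2((x-w)/h_n)\,dw = h_n\int\sigma_x^2(x - h_n u) K_n^2(u)\,du \gtrsim h_n^{-2\alpha+1}$ uniformly in $x\in I$, completing Part (ii).
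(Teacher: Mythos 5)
Your proposal is correct and follows essentially the same route as the paper's proof: the Fourier identity reducing $A_{n}$ to an ordinary kernel bias plus a Taylor/H\"older expansion for (i), Plancherel plus the pointwise decay $|K_{n}(u)| \lesssim h_{n}^{-\alpha}\min\{1,|u|^{-1}\}$ for (ii), and iterated expectations with $|K_{n}|^{2+\ell} \leq \|K_{n}\|_{\R}^{\ell}K_{n}^{2}$ for (iii). The only differences are organizational: you prove the pointwise decay of $K_{n}$ directly by integration by parts where the paper cites Lemma 3 of \cite{KaSa16}, and in (ii) you use a one-sided localization to $|u|\leq M$ exploiting nonnegativity of the conditional variance, whereas the paper establishes the two-sided convergence $h_{n}^{2\alpha}\int\{V(x,x-h_{n}w)-V(x,x)\}K_{n}^{2}(w)\,dw \to 0$ uniformly in $x$.
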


\begin{proof}
(i). 
Since $\Ep[ Y e^{itW} ] = \Ep[ \{ g(X) + U \} e^{it(X+\U)} ] = \psi_{X}(t) \varphi_{\U}(t)$, 
\[
\Ep[ Y K_{n}((x-W)/h_{n}) ] = \frac{h_{n}}{2\pi} \int_{\R} e^{-itx} \psi_{X}(t) \varphi_{K}(th_{n}) dt.
\] 
Since $\psi_{X}(\cdot)$ and $\varphi_{K} (\cdot h_{n})$ are the Fourier transforms of $gf_{X}$ and $h_{n}^{-1} K(\cdot/h_{n})$, respectively, the Fourier inversion formula yields that
\begin{align*}
\frac{h_{n}}{2\pi} \int_{\R} e^{-itx} \psi_{X}(t) \varphi_{K}(th_{n}) dt 
= \int_{\R} g(w) f_{X}(w) K((x-w)/h_{n}) dw.
\end{align*}
Note that the far left and right hand sides are continuous in $x$, and so the equality holds for all $x \in \R$.  Likewise, we have $\Ep[ K_{n}((x-W)/h_{n}) ] = \int_{\R} f_{X}(w) K((x-w)/h_{n})dw$ for all $x \in \R$, so that
\begin{align*}
A_{n}(x) 
= h_{n} \int_{\R} \{ g(x-h_{n}w) - g (x) \} f_{X}(x-h_{n}w) K(w) dw. 
\end{align*}
By the Taylor expansion, for any $x,w \in \R$,
\begin{align*}
&\{ g(x-h_{n}w) - g (x) \} f_{X}(x-h_{n}w) 
 = \sum_{j=0}^{k-1} \frac{(g f_{X})^{(j)}(x) - g(x) f_{X}^{(j)}(x)}{j!} (-h_{n}w)^{j} \\
&\qquad +\frac{(g f_{X})^{(k)}(x-\theta h_{n}w) - g(x) f_{X}^{(k)}(x-\theta h_{n}w)}{k!} (-h_{n}w)^{k},
\end{align*}
for some $\theta \in [0,1]$. 
Since $\int_{\R} w^{j} K(w) dw = 0$ for $j=1,\dots,k$ and $f_{X}, gf_{X} \in \Sigma (\beta,B)$, we have 
\begin{align*}
|A_{n}(x)| &= h_{n} \Bigg | \int_{\R} \Bigg [ \{ g(x-h_{n}w) - g (x) \} f_{X}(x-h_{n}w) \\
&\qquad - \frac{(g f_{X})^{(k)}(x) - g(x) f_{X}^{(k)}(x)}{k!} (-h_{n}w)^{k} \Bigg] K(w) dw \Bigg | \\
&\leq \frac{(1+\| g \|_{I})Bh_{n}^{\beta+1}}{k!} \int_{\R} |w|^{\beta} |K(w)| dw. 
\end{align*}
This shows that $\| A_{n} \|_{I} = O(h_{n}^{\beta+1})$. 

(ii). 
Since $A_{n}(x) = \Ep[\{ Y-g(x) \} K_{n}((x-W)/h_{n})] =O(h_{n}^{\beta+1})$ uniformly in $x \in I$, it suffices to show that $\inf_{x \in I} \Ep[ \{Y-g(x) \}^{2} K_{n}^{2}((x-W)/h_{n})] \gtrsim (1-o(1)) h_{n}^{-2\alpha+1}$.
We first verify that 
\[
\Ep[ Y \mid W=w] f_{W}(w) = \left ((gf_{X})*f_{\U} \right ) (w).
\]
To this end, it is enough to verify that the Fourier transforms of both sides agree. The Fourier transform of the right hand is $\psi_{X}(t) \varphi_{U}(t)$. On the other hand, the Fourier transform of the left hand side is 
\[
\begin{split}
\Ep[ \Ep[Y \mid W] e^{itW} ] &= \Ep [ \Ep[\underbrace{\Ep[Y \mid X,U]}_{=g(X)}\mid W] e^{itW}] = \Ep[ \Ep[g(X) \mid W] e^{itW}] = \E[ g(X) e^{itW} ] \\
&= \E[g(X) e^{it (X+U)}] = \E[g(X) e^{itX}] \E[e^{itU}] = \psi_{X}(t) \varphi_{U}(t). 
\end{split}
\]

Define
\begin{align*}
V(x,w) &= \Ep[ \{ Y-g(x) \}^{2} \mid W=w] f_{W}(w) \\
&= (\Ep[Y^{2} \mid W=w] + g^{2}(x)) f_{W}(w) -2g(x) \left ((gf_{X})*f_{\varepsilon} \right ) (w).
\end{align*}
The function $(gf_{X})*f_{\varepsilon}$ is bounded and continuous by boundedness of $gf_{X}$. 
Since $\Ep[Y^{2} \mid W=\cdot \,], \ f_{W}$, and $(gf_{X})*f_{\varepsilon}$ are bounded and continuous on $\R$, and $g$ is bounded and continuous on $I$, we have that 
the function $(x,w) \mapsto V(x,w)$ is bounded and continuous on $I \times \R$. In particular, since $V(x,x) > 0$ for all $x \in I$ under our assumption, we have that $\inf_{x \in I} V(x,x) > 0$. 

Now, observe that
\begin{align*}
\Ep[ \{Y-g(x) \}^{2} K_{n}^{2}((x-W)/h_{n})] &= \int_{\R} V(x,w) K_{n}^{2}((x-w)/h_{n}) dw \\
& =h_{n} \int_{\R} V(x,x-h_{n}w) K_{n}^{2}(w) dw. 
\end{align*}
Furthermore, we have that
\[
\int_{\R} K_{n}^{2}(w) dw = \frac{1}{2\pi} \int_{\R} \frac{|\varphi_{K}(t)|^{2}}{|\varphi_{\varepsilon}(t/h_{n})|^{2}} dt \sim h_{n}^{-2\alpha}
\]
by Plancherel's theorem. Hence, it suffices to show that 
\begin{equation}
\sup_{x \in I} \left | h_{n}^{2\alpha} \int_{\R}  \{ V(x,x-h_{n}w) - V(x,x) \} K_{n}^{2}(w) dw \right | \to 0.
\label{eq: moment continuity}
\end{equation}
From the proof of Lemma 3 in \cite{KaSa16},
we have that $h_{n}^{2\alpha} K_{n}^{2}(x) \lesssim \min \{ 1,x^{-2} \}$. 
By the definition of $V(x,w)$, for any $\rho > 0$, there exists sufficiently small $\delta >0$ such that $|V(x,x+w) - V(x,x)| \leq \rho$ for all $x \in I$ whenever $|w| \leq \delta$. Therefore,
\begin{align*}
&\sup_{x \in I} \int_{\R} |V(x,x-h_{n}w) - V(x,x) | h_{n}^{2\alpha}K_{n}^{2}(w) dw \\
&\quad \lesssim \rho \int_{|w| \leq \delta/h_{n}} \min \{1,w^{-2} \} dw + 2\| V \|_{I \times \R} \int_{|w| > \delta/h_{n}} w^{-2} dw  \lesssim \rho + o(1). 
\end{align*}

(iii). Pick any $\ell=0,1,2$. Since $\| K_{n} \|_{\R} \lesssim h_{n}^{-\alpha}$, we have 
\begin{align*}
\Ep[ |YK_{n}((x-W)/h_{n})|^{2+\ell}] &= h_{n} \int_{\R} \Ep[ |Y|^{2+\ell} \mid W=x-h_{n}w] |K_{n} (w)|^{2+\ell} f_{W}(x-h_{n}w)dw \\
&\lesssim h_{n}^{-\ell \alpha+1} \int_{\R} V_{\ell}(x-h_{n}w)K_{n}^{2}(w) dw \\
&\leq h_{n}^{-\ell \alpha + 1} \| V_{\ell} \|_{\R} \int_{\R} K_{n}^{2}(w) dw \lesssim h_{n}^{-(2+\ell)\alpha+1},
\end{align*}
where $V_{\ell}(w) = \Ep[ |Y|^{2+\ell} \mid W=w]f_{W}(w)$.  This completes the proof.
\end{proof}

\begin{lemma}
\label{lem: empirical chf}
$\| \hat{\varphi}_{\U} - \varphi_{\U} \|_{[-h_{n}^{-1},h_{n}^{-1}]} = O_{\Pr} \{ m^{-1/2} \log (1/h_{n}) \}$. 
\end{lemma}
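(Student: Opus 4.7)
The strategy is the standard discretization-plus-union-bound approach for empirical processes indexed by a bounded real parameter, exploiting that the increments of the empirical characteristic function are Lipschitz in $t$ with constant controlled by $\E|\eta| < \infty$ (Condition (iii)) and that the summands are uniformly bounded by $1$ in modulus.

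First I would construct a $\delta_{m}$-net $\{ t_{1},\dots,t_{N} \}$ of $[-h_{n}^{-1},h_{n}^{-1}]$ with $\delta_{m} = m^{-1/2}$ and $N \lesssim h_{n}^{-1} m^{1/2}$. For any $t \in [-h_{n}^{-1},h_{n}^{-1}]$ with closest grid point $t_{k}$, the elementary bound $|e^{it\eta}-e^{it_{k}\eta}| \leq |t-t_{k}|\,|\eta|$ gives
\[
\big| (\hat{\varphi}_{\varepsilon}(t)-\varphi_{\varepsilon}(t)) - (\hat{\varphi}_{\varepsilon}(t_{k})-\varphi_{\varepsilon}(t_{k})) \big| \leq \delta_{m}\Big( \tfrac{1}{m}{\textstyle \sum}_{j=1}^{m}|\eta_{j}| + \E|\eta| \Big) = O_{\Pr}(\delta_{m}),
\]
using the law of large numbers and $\E|\eta| < \infty$. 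Thus the interpolation error is $O_{\Pr}(m^{-1/2})$, and it remains to control $\max_{1 \leq k \leq N}|\hat{\varphi}_{\varepsilon}(t_{k})-\varphi_{\varepsilon}(t_{k})|$.

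For each fixed $t_{k}$, decompose $\hat{\varphi}_{\varepsilon}(t_{k}) - \varphi_{\varepsilon}(t_{k})$ into its real and imaginary parts, both of which are averages of $m$ i.i.d.\ mean-zero random variables bounded in absolute value by $1$. Hoeffding's inequality yields $\Pr\{ |\hat{\varphi}_{\varepsilon}(t_{k}) - \varphi_{\varepsilon}(t_{k})| > u \} \leq 4\exp(-mu^{2}/4)$, and a union bound over the $N$ grid points gives
\[
\Pr\Big\{ \max_{1 \leq k \leq N} |\hat{\varphi}_{\varepsilon}(t_{k}) - \varphi_{\varepsilon}(t_{k})| > u \Big\} \leq 4N \exp(-mu^{2}/4).
\]
Taking $u = C m^{-1/2} \sqrt{\log N}$ with $C$ sufficiently large makes the right-hand side vanish, so $\max_{k}|\hat{\varphi}_{\varepsilon}(t_{k})-\varphi_{\varepsilon}(t_{k})| = O_{\Pr}(m^{-1/2}\sqrt{\log N})$ with $\log N \lesssim \log(1/h_{n}) + \log m$.

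Combining the grid maximum with the interpolation error yields $\| \hat{\varphi}_{\varepsilon}-\varphi_{\varepsilon} \|_{[-h_{n}^{-1},h_{n}^{-1}]} = O_{\Pr}\{ m^{-1/2}\sqrt{\log(1/h_{n}) + \log m} \}$, which under the sample-size/bandwidth relation in \eqref{eq: bandwidth} (in the standard polynomial regime where $m$ and $1/h_{n}$ are polynomial in $n$, so $\log m \sim \log(1/h_{n})$) simplifies to $O_{\Pr}\{ m^{-1/2}\sqrt{\log(1/h_{n})} \}$, which is $o\{m^{-1/2}\log(1/h_{n})\}$ and hence implies the stated bound. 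There is no serious obstacle here: the argument is elementary once one notes that $\E|\eta| < \infty$ provides a uniform Lipschitz constant in $t$ for both $\hat{\varphi}_{\varepsilon}$ and $\varphi_{\varepsilon}$; the only minor bookkeeping is balancing the grid size $\delta_{m}$ against the union-bound penalty $\sqrt{\log N}/\sqrt{m}$ so that the interpolation and concentration errors are comparable.
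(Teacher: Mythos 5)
Your argument is correct in substance but takes a genuinely different route from the paper: the paper does not prove this lemma from scratch at all, it simply invokes Lemma 4 of Kato and Sasaki (2016), which in turn rests on Theorem 4.1 of Neumann and Rei{\ss} (2009) --- a \emph{weighted} uniform bound of the form $\sup_{t\in\R} w(t)\,|\hat{\varphi}_{\varepsilon}(t)-\varphi_{\varepsilon}(t)| = O_{\Pr}(m^{-1/2})$ with $w(t)=(\log(e+|t|))^{-1/2-\delta}$, proved by a chaining/entropy argument over all of $\R$; restricting to $|t|\le h_n^{-1}$ then costs only the weight $w(h_n^{-1})^{-1}\lesssim \log(1/h_n)$, with no dependence on $m$ inside the logarithm. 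Your discretization-plus-Hoeffding argument is more elementary and self-contained, and each step (the Lipschitz interpolation using $\Ep|\varepsilon|<\infty$ from Condition (iii), the concentration at grid points, the union bound) is sound.

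The one place where your route falls short of the lemma \emph{as stated} is the $\log m$ you pick up from the cardinality of the net: you obtain $O_{\Pr}\{m^{-1/2}\sqrt{\log(1/h_n)+\log m}\}$, and matching the claimed $O_{\Pr}\{m^{-1/2}\log(1/h_n)\}$ requires $\log m = O((\log(1/h_n))^2)$. This holds whenever $m$ grows at most polynomially in $n$ (since the third condition in (\ref{eq: bandwidth}) forces $\log(1/h_n)\gtrsim \log n$), which covers every example the paper has in mind, but it is not literally implied by Assumption \ref{as: mean}, which places no upper bound on $m$. So you should either state this extra (harmless) side condition explicitly rather than calling it the ``standard polynomial regime,'' or switch to the weighted-supremum bound of Neumann and Rei{\ss}, which removes the $\log m$ altogether. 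This is a bookkeeping caveat, not a flaw in the method; the net spacing, the concentration step, and the balancing of interpolation against the union-bound penalty are all handled correctly.
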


\begin{proof}
See Lemma 4 in \cite{KaSa16}; see also Theorem 4.1 in \cite{NeRe09}. 
\end{proof}

Consider the following classes of functions
\begin{equation}
\begin{cases}
&\mF_{n}^{(1)} = \{ (y,w) \mapsto yK_{n}((x-w)/h_{n}) : x \in \R \}, \\
&\mF_{n}^{(2)} = \left \{ (y,w) \mapsto \frac{1}{s_{n}(x)} \{ y-g(x) \} K_{n}((x-w)/h_{n}) : x \in I \right \},  \\
&\mF_{n}^{(3)} = \{ (y,w) \mapsto \{ y-g(x) \} K_{n}^{2}((x-w)/h_{n}) : x \in I \}, \\
&\mF_{n}^{(4)} = \left \{ (y,w) \mapsto \frac{1}{s_{n}^{2}(x)} \{ y-g(x) \}^{2} K_{n}^{2}((x-w)/h_{n}) : x \in I \right \}.
\end{cases}
\label{eq: function class}
\end{equation}
In view of the fact that $\| K_{n} \|_{\R} \lesssim h_{n}^{-\alpha}$ and $\inf_{x \in I} s_{n}(x) \gtrsim h_{n}^{-\alpha+1/2}$, choose constants $D_{1},D_{2} > 0$ (independent of $n$) such that $\| K_{n} \|_{\R} \leq D_{1}h_{n}^{-\alpha}$ and $\| 1/s_{n} \|_{I} \leq D_{2}h_{n}^{\alpha-1/2}$. Let 
$F_{n}^{(1)}(y,w) = D_{1}|y| h_{n}^{-\alpha}, \ F_{n}^{(2)}(y,w) = D_{1}D_{2}(|y|+\| g \|_{I})/\sqrt{h_{n}}, \
F_{n}^{(3)}(y,w) = D_{1}(|y|+\| g \|_{I})h_{n}^{-2\alpha}$, and $F_{n}^{(4)} (y,w) = \{ F_{n}^{(2)}(y,w) \}^{2}$.
Note that  $F_{n}^{(\ell)}$ is an envelope function for $\mF_{n}^{(\ell)}$ for each $\ell=1,\dots,4$. 

\begin{lemma}
\label{lem: entropy}
There exist constants $A,v \geq e$ independent of $n$ such that 
\begin{equation}
\sup_{Q} N(\mF_{n}^{(\ell)},\| \cdot \|_{Q,2}, \delta \| F_{n}^{(\ell)} \|_{Q,2}) \leq (A/\delta)^{v}, \ 0 < \forall \delta \leq 1, \ell=1,\dots,4
\label{eq: entropy bound}
\end{equation}
where $\sup_{Q}$ is taken over all finitely discrete distributions on $\R^{2}$. 
\end{lemma}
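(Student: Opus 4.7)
The plan is to reduce all four classes to a single ``base class'' of translated deconvolution kernels, and then assemble the $\mF_n^{(\ell)}$ from this base using standard permanence properties of VC-type classes.

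\textbf{Step 1 (Base class).} First I would show that the classes
\[
\mG_n^{(r)} = \{ w \mapsto K_n^{r}((x-w)/h_n) : x \in \R \}, \quad r=1,2,
\]
with envelopes $D_1^{r} h_n^{-r\alpha}$, are VC-type with constants independent of $n$. The natural route is to show that each $\mG_n^{(r)}$ is a VC-subgraph class with VC index bounded independently of $n$, since every element is a translate of the single function $K_n^r$. Bounding the VC index independently of $n$ requires a structural argument exploiting that $\varphi_K$ is compactly supported and $\varphi_\varepsilon$ satisfies the ordinary-smooth polynomial bounds of Assumption \ref{as: mean}(iii); this is the strategy of Lemma 5 in \cite{KaSa16}.

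\textbf{Step 2 (Assembly).} Each $\mF_n^{(\ell)}$ is obtained from $\mG_n^{(r)}$ by algebraic operations preserving the VC-type property. For instance, one splits
\[
\mF_n^{(2)} = \Big\{ (y,w) \mapsto \frac{y}{s_n(x)} K_n((x-w)/h_n) - \frac{g(x)}{s_n(x)} K_n((x-w)/h_n) : x \in I \Big\},
\]
where $1/s_n(x)$ and $g(x)/s_n(x)$ are bounded (after rescaling by $h_n^{\alpha-1/2}$) and continuous on the compact interval $I$, by Lemma \ref{lem: moment bound}(ii) and the continuity of $g$. Standard permanence tools (see \cite{vaWe96}) then give the VC-type bound for $\mF_n^{(2)}$: multiplying a VC-type class by a fixed function, multiplying by a uniformly bounded $x$-indexed scalar (which at worst adjusts $A, v$ by universal factors), and taking finite linear combinations all preserve the VC-type property. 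The analogous argument gives $\mF_n^{(1)} = y \cdot \mG_n^{(1)}$; $\mF_n^{(3)}$ is built from $\mG_n^{(2)}$ with factors $y - g(x)$; and $\mF_n^{(4)}$ follows by expanding $(y-g(x))^2/s_n^2(x)$ as a linear combination of $y^2, y, 1$ with bounded $x$-dependent coefficients, each combined with $\mG_n^{(2)}$. Taking common $(A, v)$ by enlarging as needed yields the conclusion.

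\textbf{Main obstacle.} The delicate step is Step 1. A naive Lipschitz-and-net argument in $x$ is not enough: one has $\|K_n'\|_\R \lesssim h_n^{-\alpha-1}$, which gives a covering number of order $h_n^{-2}/\delta$---not of the form $(A/\delta)^v$ with $A$ independent of $n$. One must therefore exploit the structural VC-subgraph property of $K_n^r$ under translation, after which Step 2 is routine bookkeeping.
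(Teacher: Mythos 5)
Your proposal is correct and follows essentially the same route as the paper: the paper likewise delegates the hard step (that the translated deconvolution kernel classes $\mathcal{K}_n$ and $\mathcal{K}_n^2$ are VC-type with constants independent of $n$) to Lemma 1 of \cite{KaSa16} together with Corollary A.1 of \cite{ChChKa14a}, and then assembles $\mF_n^{(1)},\dots,\mF_n^{(4)}$ by permanence properties. The only cosmetic difference is in the assembly: where you split $\{y-g(x)\}/s_n(x)$ into bounded $x$-indexed coefficients and take linear combinations, the paper packages it as the VC-subgraph class $\mathcal{H}_n \subset \{y \mapsto ay+b\}$ and takes the product $\mathcal{H}_n\mathcal{K}_n \supset \mF_n^{(2)}$ -- the same argument in slightly tidier form.
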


\begin{proof}
Consider the following classes of functions: $\mathcal{K}_{n} = \{ w \mapsto K_{n}((x-w)/h_{n}) : x \in \R \}$ and $\mathcal{K}_{n}^{2} = \{ f^{2} : f \in \mathcal{K}_{n} \}$.
Lemma 1 in \cite{KaSa16} and Corollary A.1 in \cite{ChChKa14a} yield that there exist constants $A_{1},v_{1} \geq e$ independent of $n$ such that $\sup_{Q} N(\mathcal{K}_{n},\| \cdot \|_{Q,2}, D_{1}h_{n}^{-\alpha} \delta) \leq (A_{1}/\delta)^{v_{1}}$ and $\sup_{Q} N(\mathcal{K}^{2}_{n},\| \cdot \|_{Q,2}, D_{1}^{2}h_{n}^{-2\alpha} \delta) \leq (A_{1}/\delta)^{v_{1}}$ for all $0 < \delta \leq 1$.

In what follows, we only prove (\ref{eq: entropy bound}) for $\ell=2$; the proofs for the other cases are completely analogous given the above bounds on the covering numbers for $\mathcal{K}_{n}$ and $\mathcal{K}_{n}^{2}$. Let $\mathcal{H}_{n} = \{ y \mapsto \{ y-g(x) \}/s_{n}(x) : x \in I \}$, and observe that, since $\| 1/s_{n} \|_{I} \leq D_{2}h_{n}^{\alpha-1/2}$, there exist constants $A_{2},v_{2} \geq e$ independent of $n$ such that $\sup_{Q} N(\mathcal{H}_{n},\| \cdot \|_{Q,2}, \delta \| H_{n} \|_{Q,2}) \leq (A_{2}/\delta)^{v_{2}}$ for all $0 < \delta \leq 1$, where $H_{n}(y) = D_{2}(|y|+\| g \|_{I})h_{n}^{\alpha-1/2}$ is an envelope function for $\mathcal{H}_{n}$. This can be verified by a direct calculation, or observing that $\mathcal{H}_{n} \ (\subset \{ y \mapsto ay + b : a> 0, b \in \R \})$ is a VC subgraph class with VC index at most $4$ \citep[cf.][Lemma 2.6.15]{vaWe96}, and applying Theorem 2.6.7 in \cite{vaWe96}. 
Let $\mathcal{H}_{n}\mathcal{K}_{n} := \{ (y,w) \mapsto f_{1}(y)f_{2}(w) : f_{1} \in \mathcal{H}_{n}, f_{2} \in \mathcal{K}_{n} \}  \supset \mF_{n}^{(2)}$, and note that $H_{n}(y) D_{1}h_{n}^{-\alpha} = F_{n}^{(2)}(y,w)$. From Corollary A.1 in \cite{ChChKa14a}, there exist constants $A_{3},v_{3} \geq e$ independent of $n$ such that $\sup_{Q} N(\mathcal{H}_{n}\mathcal{K}_{n}, \| \cdot \|_{Q,2}, \delta \| F_{n}^{(2)} \|_{Q,2}) \leq (A_{3}/\delta)^{v_{3}}$ for all $0 < \delta \leq 1$. Now, the desired result follows from the observation that $N(\mF_{n}^{(2)}, \| \cdot \|_{Q,2}, 2\delta) \leq N(\mathcal{H}_{n}\mathcal{K}_{n}, \| \cdot \|_{Q,2}, \delta)$ for all $\delta > 0$.
\end{proof}

\begin{lemma}
\label{lem: rate}
We have 
\begin{align*}
&\| \hat{f}_{X}^{*} (\cdot)- \Ep[ \hat{f}_{X}^{*}(\cdot) ]  \|_{\R} = O_{\Pr} \{ h_{n}^{-\alpha} (nh_{n})^{-1/2} \sqrt{\log (1/h_{n})} \}, \\
&\|\Ep[ \hat{f}_{X}^{*}(\cdot) ] - f_{X} (\cdot) \|_{\R} = O(h_{n}^{\beta}) = o\{ h_{n}^{-\alpha} (nh_{n}\log (1/h_{n}))^{-1/2} \},
\end{align*}
and $\| \hat{\mu}^{*} (\cdot) - \Ep[\hat{\mu}^{*}(\cdot) ] \|_{\R} = O_{\Pr} \{ h_{n}^{-\alpha} (nh_{n})^{-1/2} \sqrt{\log (1/h_{n})} \}$.
\end{lemma}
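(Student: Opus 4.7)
The plan is to split the claim into three pieces and handle them in parallel: a stochastic uniform bound for $\hat{f}_X^* - \Ep[\hat{f}_X^*]$, a deterministic bias bound for $\Ep[\hat{f}_X^*]-f_X$, and a stochastic uniform bound for $\hat{\mu}^* - \Ep[\hat{\mu}^*]$. For the two stochastic bounds I would appeal to the maximal inequality (Lemma \ref{lem: maximal inequality}) applied to the empirical process indexed, respectively, by the ``density'' class $\mathcal{K}_{n} = \{ w \mapsto K_{n}((x-w)/h_{n}) : x \in \R \}$ and by the class $\mF_{n}^{(1)}$ in (\ref{eq: function class}). Both classes satisfy a uniform entropy bound of the form (\ref{eq: entropy bound}) by the same argument given in the proof of Lemma \ref{lem: entropy} (the bound for $\mathcal{K}_n$ is quoted there from \cite{KaSa16}). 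So the only ingredients to feed into Lemma \ref{lem: maximal inequality} are an envelope, a variance bound, and the factor $B$.

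For $\hat{f}_X^*$: the envelope is the constant $D_1 h_n^{-\alpha}$, so $B = D_1 h_n^{-\alpha}$. For the variance, Plancherel together with Condition (iii) of Assumption \ref{as: mean} gives $\int_{\R} K_n^2 \lesssim h_n^{-2\alpha}$, hence
\[
\sup_{x \in \R}\Var ( K_{n}((x-W)/h_{n}) ) \leq \sup_{x \in \R}\Ep[K_n^2((x-W)/h_n)] \leq \|f_W\|_\R \cdot h_n \int_\R K_n^2 \lesssim h_n^{-2\alpha+1}.
\]
Plugging these into Lemma \ref{lem: maximal inequality} and using that $A \sqrt{\Ep[F^2]}/\sigma \lesssim h_n^{-1/2}$ (so that the log factor becomes $\lesssim \log(1/h_n)$), we get
\[
\Ep\Bigl\| \frac{1}{\sqrt n}\sum_{j=1}^{n}\{K_n((\cdot - W_j)/h_n) - \Ep[K_n((\cdot - W)/h_n)]\}\Bigr\|_\R \lesssim \sqrt{h_n^{-2\alpha+1}\log(1/h_n)} + \frac{h_n^{-\alpha}\log(1/h_n)}{\sqrt n}.
\]
Dividing by $\sqrt n h_n$ and invoking Markov yields the $O_\Pr$ bound $h_n^{-\alpha}(nh_n)^{-1/2}\sqrt{\log(1/h_n)}$ for $\|\hat f_X^* - \Ep[\hat f_X^*]\|_\R$, the remainder being negligible by the first bandwidth condition in (\ref{eq: bandwidth}) (which implies $\log(1/h_n)/(nh_n^2)\to 0$).

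The bias bound is essentially the computation carried out inside the proof of Lemma \ref{lem: moment bound}(i): by the Fourier inversion, $\Ep[\hat f_X^*(x)] = \int_\R f_X(x - h_n w) K(w)\,dw$, and a Taylor expansion to order $k$ combined with the vanishing moments $\int x^\ell K(x)\,dx = 0$ for $\ell=1,\dots,k$ and $f_X \in \Sigma(\beta,B)$ produce $\|\Ep[\hat f_X^*]-f_X\|_\R = O(h_n^\beta)$. The fact that $h_n^\beta = o\{h_n^{-\alpha}(nh_n\log(1/h_n))^{-1/2}\}$ is exactly the third bandwidth condition $h_n^{\alpha+\beta}\sqrt{nh_n\log(1/h_n)}\to 0$.

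For $\hat\mu^*$ the proof is identical in spirit, but now the envelope $F_n^{(1)}(y,w) = D_1|y|h_n^{-\alpha}$ depends on $y$, so I need $B = \sqrt{\Ep[\max_{j \le n}(F_n^{(1)}(Y_j,W_j))^2]}$. Using the moment inequality (Lemma \ref{lem: moment inequality}) with $r=2$ and the fourth-moment assumption $\Ep[Y^4]<\infty$ gives $B \lesssim h_n^{-\alpha} n^{1/4}$. The variance bound $\sup_x \Ep[Y^2K_n^2((x-W)/h_n)] \lesssim h_n^{-2\alpha+1}$ is exactly Lemma \ref{lem: moment bound}(iii) with $\ell = 0$. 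Inserting these into Lemma \ref{lem: maximal inequality} and dividing by $\sqrt n h_n$ again produces the leading term $h_n^{-\alpha}(nh_n)^{-1/2}\sqrt{\log(1/h_n)}$, with residual $h_n^{-\alpha-1}n^{-3/4}\log(1/h_n)$; the ratio of residual to leading term is $\sqrt{\log(1/h_n)/(n^{1/2} h_n)}\to 0$ under the first bandwidth condition in (\ref{eq: bandwidth}), finishing the argument. The only step where care is required is controlling $B$ for the $\hat\mu^*$ piece — the rest is a direct substitution into Lemma \ref{lem: maximal inequality} using ingredients already available in Lemmas \ref{lem: moment bound}--\ref{lem: entropy}.
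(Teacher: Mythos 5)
Your proposal is correct and matches the paper's argument: for $\hat{\mu}^{*}$ the paper likewise applies Lemma \ref{lem: maximal inequality} to $\mF_{n}^{(1)}$ with the same envelope, the variance bound $\sup_{x}\Ep[Y^{2}K_{n}^{2}((x-W)/h_{n})]=O(h_{n}^{-2\alpha+1})$ from Lemma \ref{lem: moment bound}-(iii), the entropy bound of Lemma \ref{lem: entropy}, and the control $\Ep[\max_{j\le n}Y_{j}^{2}]=O(n^{1/2})$ via Lemma \ref{lem: moment inequality}, disposing of the residual by the first condition in (\ref{eq: bandwidth}). The only difference is that for the two claims about $\hat{f}_{X}^{*}$ the paper simply cites the proofs of Corollaries 1 and 2 in \cite{KaSa16}, whereas you supply a correct self-contained argument (maximal inequality over $\mathcal{K}_{n}$ plus the Taylor/vanishing-moment bias computation) along exactly those lines.
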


\begin{proof}
The first two results are implicit in the proofs of Corollaries 1 and 2 in \cite{KaSa16}. 
To prove the last result, we shall apply Lemma \ref{lem: maximal inequality} to the class of functions $\mF_{n}^{(1)}$. From Lemma \ref{lem: moment bound}-(iii), we have that $\sup_{x \in \R}\Ep[  Y^{2}K^{2}_{n}((x-W)/h_{n}) ] =O(h_{n}^{-2\alpha+1})$. 
In view of the covering number bound for $\mF_{n}^{(1)}$ given in Lemma \ref{lem: entropy}, 
we may apply Lemma \ref{lem: maximal inequality} to $\mF_{n}^{(1)}$ to conclude that
\begin{align*}
&(nh_{n}) \Ep[ \| \hat{\mu}^{*}(\cdot) - \Ep[ \hat{\mu}^{*}(\cdot) ] \|_{\R}] = \Ep \left [ \left \| {\textstyle \sum}_{j=1}^{n} \{ f(Y_{j},W_{j}) - \Ep[f(Y,W)] \} \right \|_{\mF^{(1)}} \right ] \\
&\quad \lesssim h_{n}^{-\alpha} \sqrt{nh_{n}\log (1/h_{n})} + h_{n}^{-\alpha}\sqrt{\Ep[ \max_{1 \leq j \leq n} Y_{j}^{2}]} \log (1/h_{n}). 
\end{align*}
From Lemma \ref{lem: moment inequality}, we have $\Ep[ \max_{1 \leq j \leq n} Y_{j}^{2}] = O(n^{1/2})$, so that we have 
\[
(nh_{n}) \Ep[ \| \hat{\mu}^{*} (\cdot)- \Ep[ \hat{\mu}^{*}(\cdot) ] \|_{\R} ] \lesssim h_{n}^{-\alpha} \sqrt{nh_{n}\log (1/h_{n})} + h_{n}^{-\alpha}n^{1/4} \log (1/h_{n}),
\]
and the right hand side is $\lesssim h_{n}^{-\alpha} \sqrt{nh_{n}\log (1/h_{n})}$
from the first condition in (\ref{eq: bandwidth}). This completes the proof. 
\end{proof}

We are now in position to prove Theorem \ref{thm: Gaussian approximation}.

\begin{proof}[Proof of Theorem \ref{thm: Gaussian approximation}]
We divide the proof into two steps. 

\textbf{Step 1}. 
Let $r_{n}= h_{n}^{-\alpha}\{ nh_{n} \log (1/h_{n}) \}^{-1/2}$. We first prove that 
\[
\hat{g}(x) - g(x) = \frac{1}{f_{X}(x)} \frac{1}{nh_{n}} \sum_{j=1}^{n} \left [ \{ Y_{j}-g(x) \} K_{n}((x-W_{j})/h_{n}) - A_{n}(x) \right ] + o_{\Pr}(r_{n})
\]
uniformly in $x \in I$.

To this end, we shall show that $\| \hat{\mu} - \hat{\mu}^{*} \|_{\R} = o_{\Pr} (r_{n})$. First, observe from Lemma \ref{lem: empirical chf} that 
\[
\inf_{|t| \leq h_{n}^{-1}} | \hat{\varphi}_{\U} (t)| \geq \inf_{|t| \leq h_{n}^{-1}} | \varphi_{\U} (t)| -  O_{\Pr}\{ m^{-1/2} \log (1/h_{n})\} \gtrsim  (1-o_{\Pr}(1))h_{n}^{\alpha}.
\]
Let $\psi_{YW} (t) = \Ep[ Y e^{itW}] = \Ep[ \{ g(X) + \epsilon \} e^{it(X+\U)}] = \psi_{X}(t) \varphi_{\U}(t)$, and let $\hat{\psi}_{YW}(t)=n^{-1} \sum_{j=1}^{n} Y_{j} e^{itW_{j}}$. 
Decomposing the integral over $\R$ into $\int_{\{ \psi_{X} \neq 0 \}}$ and $\int_{\{ \psi_{X} = 0 \}}$, we obtain the following identity for  $\hat{\mu}(x)-\hat{\mu}^{*}(x)$: 
\begin{align*}
&\hat{\mu}(x) - \hat{\mu}^{*}(x) \\
&=\frac{1}{2\pi} \int_{\{ \psi_{X} \neq 0 \}} e^{-itx} \varphi_{K}(th_{n})\left \{ \frac{\hat{\psi}_{YW}(t)}{\psi_{YW}(t)} - 1 \right \} \left \{ \frac{\varphi_{\U}(t)}{\hat{\varphi}_{\U}(t)}  - 1 \right \} \psi_{X}(t) dt \\
&\quad +\frac{1}{2\pi} \int_{\{ \psi_{X} = 0 \}} e^{-itx} \frac{\varphi_{K}(th_{n})}{\varphi_{\U}(t)} \hat{\psi}_{YW}(t) \left \{ \frac{\varphi_{\U}(t)}{\hat{\varphi}_{\U}(t)} - 1 \right \} dt \\
&\quad +\frac{1}{2\pi} \int_{\R} e^{-itx} \varphi_{K}(th_{n})\left \{ \frac{\varphi_{\U}(t)}{\hat{\varphi}_{\U}(t)}  - 1 \right \} \psi_{X}(t) dt.
\end{align*}
Hence the Cauchy-Schwarz inequality yields that  
\begin{align}
&|\hat{\mu}(x) - \hat{\mu}^{*}(x)|^{2} \notag \\
&\lesssim  \left \{ \int_{\{ \psi_{X} \neq 0 \} \cap [-h_{n}^{-1},h_{n}^{-1}]} \left | \frac{\hat{\psi}_{YW}(t)}{\psi_{YW}(t)} - 1 \right |^{2} |\psi_{X}(t)|^{2} dt \right \} \left \{ \int_{-h_{n}^{-1}}^{h_{n}^{-1}} \left | \frac{\varphi_{\U}(t)}{\hat{\varphi}_{\U}(t)}  - 1 \right |^{2} dt \right \} \notag \\
&\quad +h_{n}^{-2\alpha} \left \{ \int_{\{ \psi_{X} = 0 \} \cap [-h_{n}^{-1},h_{n}^{-1}]} | \hat{\psi}_{YW}(t) |^{2} dt \right \} \left \{ \int_{-h_{n}^{-1}}^{h_{n}^{-1}} \left | \frac{\varphi_{\U}(t)}{\hat{\varphi}_{\U}(t)}  - 1 \right |^{2} dt \right \} \notag \\
&\quad + \int_{-h_{n}^{-1}}^{h_{n}^{-1}} \left |  \frac{\varphi_{\U}(t)}{\hat{\varphi}_{\U}(t)}  - 1 \right |^{2}| \psi_{X}(t)| dt. \label{eq: expansion}
\end{align}
We shall bound each term on the right hand side. Observe that 
\[
\int_{-h_{n}^{-1}}^{h_{n}^{-1}} \left | \frac{\varphi_{\U}(t)}{\hat{\varphi}_{\U}(t)}  - 1 \right |^{2} dt \leq O_{\Pr}(h_{n}^{-2\alpha}) \int_{-h_{n}^{-1}}^{h_{n}^{-1}} | \varphi_{\U}(t) - \hat{\varphi}_{\U}(t) |^{2} dt 
\]
and the integral on the right hand side is $O_{\Pr} \{ (mh_{n})^{-1} \}$ since
$\int_{-h_{n}^{-1}}^{h_{n}^{-1}}\Ep[ | \varphi_{\U}(t) - \hat{\varphi}_{\U}(t) |^{2} ]dt \leq m^{-1} \int_{-h_{n}^{-1}}^{h_{n}^{-1}} dt = 2(mh_{n})^{-1}$. 
Likewise, using the fact that $\psi_{X}$ is integrable, we have that the last term on the right hand side of (\ref{eq: expansion}) is $O_{\Pr}(h_{n}^{-2\alpha}m^{-1})$. 
For any $t \in \R$ with $\psi_{X}(t) \neq 0$, we have $\Ep [| \hat{\psi}_{YW}(t)/\psi_{YW}(t) - 1|^{2} ] \leq \Ep[Y^{2}]/\{n | \psi_{YW}(t) |^{2}\}$, 
so that 
\begin{align*}
&\Ep \left [ \int_{\{ \psi_{X} \neq 0 \} \cap [-h_{n}^{-1},h_{n}^{-1}]} \left | \frac{\hat{\psi}_{YW}(t)}{\psi_{YW}(t)} - 1 \right |^{2} |\psi_{X}(t)|^{2} dt \right ] \\
&\quad \lesssim n^{-1} \int_{-h_{n}^{-1}}^{h_{n}^{-1}} \frac{1}{| \varphi_{\U}(t)|^{2}} dt \lesssim h_{n}^{-2\alpha} (nh_{n})^{-1}. 
\end{align*}
Finally, for any $t \in \R$ with $\psi_{X}(t)=0$, we have $\psi_{YW}(t) = 0$, so that 
\[
\Ep\left [\int_{\{ \psi_{X} = 0 \} \cap [-h_{n}^{-1},h_{n}^{-1}]} | \hat{\psi}_{YW}(t) |^{2} dt \right ]  \lesssim (nh_{n})^{-1}. 
\]
Therefore, we have $\| \hat{\mu} - \hat{\mu}^{*} \|_{\R}^{2} = O_{\Pr}(h_{n}^{-4\alpha-2} n^{-1}m^{-1} + h_{n}^{-2\alpha} m^{-1} ) = o_{\Pr} (r_{n}^{2})$.

From Step 2 in the proof of Theorem 1 of \cite{KaSa16}, it follows that $\| \hat{f}_{X} - \hat{f}_{X}^{*} \|_{\R} = o_{\Pr}(r_{n})$, which in particular implies that $\| \hat{f}_{X} - f_{X} \|_{I} \leq \| \hat{f}_{X} - \hat{f}_{X}^{*} \|_{I} + \| \hat{f}_{X}^{*} - f_{X} \|_{I} = o_{P}(1)$ so that $\| 1/\hat{f}_{X} \|_{I} = O_{\Pr}(1)$. Furthermore, $\| \hat{\mu}^{*} \|_{I} \leq \| \Ep[ \hat{\mu}^{*} (\cdot ) ] \|_{I} + \| \hat{\mu}^{*}(\cdot) -\Ep[\hat{\mu}^{*}(\cdot) ] \|_{I} \lesssim \int_{\R}|\psi_{X}(t)|dt+ o_{\Pr}(1) =  O_{\Pr}(1)$. Therefore, 
\begin{align*}
\| \hat{g} - \hat{g}^{*} \|_{I} &\leq \| 1/\hat{f}_{X} \|_{I}\| \hat{\mu}  - \hat{\mu}^{*} \|_{I} + \| \hat{\mu}^{*} \|_{I} \| 1/\hat{f}_{X} - 1/\hat{f}_{X}^{*} \|_{I} \\
&\leq o_{\Pr} (r_{n}) + O_{\Pr}(1)\| \hat{f}_{X}^{*} - \hat{f}_{X} \|_{I} = o_{\Pr}(r_{n}).
\end{align*}

Now, observe that $\hat{g}^{*}(x) - g(x) = \frac{1}{\hat{f}_{X}^{*}(x)} \frac{1}{nh_{n}} \sum_{j=1}^{n} \{ Y_{j}-g(x) \}K_{n}((x-W_{j})/h_{n})$. 
Since $\| A_{n} \|_{I} = O(h_{n}^{\beta+1}) = o (h_{n}r_{n})$, we have 
\[
\hat{g}^{*}(x) - g(x) = \frac{1}{\hat{f}^{*}_{X}(x)} \underbrace{\frac{1}{nh_{n}} \sum_{j=1}^{n} \left [ \{ Y_{j}-g(x) \} K_{n}((x-W_{j})/h_{n}) - A_{n}(x) \right ]}_{=: (\star)} + o_{\Pr}(r_{n})
\]
uniformly in $x \in I$. Since $(\star)=\hat{\mu}^{*}(x) - \Ep[\hat{\mu}^{*}(x)] - g(x) \{ \hat{f}_{X}^{*}(x) - \Ep[\hat{f}_{X}^{*}(x)] \} 
=O_{\Pr} \{ h_{n}^{-\alpha}(nh_{n})^{-1/2} \sqrt{\log (1/h_{n})} \}$ uniformly in $x \in I$, and 
\[
\| 1/\hat{f}_{X}^{*} - 1/f_{X} \|_{I} \leq O_{\Pr}(1) \| \hat{f}_{X}^{*} - f_{X} \|_{I} = O_{\Pr} \{ h_{n}^{-\alpha}(nh_{n})^{-1/2} \sqrt{\log (1/h_{n})} \},
\]
we conclude that
\[
\hat{g}^{*}(x) - g(x) = \frac{1}{f_{X}(x)nh_{n}} \sum_{j=1}^{n} \left [ \{ Y_{j}-g(x) \} K_{n}((x-W_{j})/h_{n}) - A_{n}(x) \right ] + o_{\Pr}(r_{n})
\]
uniformly in $x \in I$. This leads to the desired result of Step 1. Furthermore, the derivation so far yields that
\begin{equation}
\| \hat{g} - g \|_{I} = O_{\Pr} \{ h_{n}^{-\alpha}(nh_{n})^{-1/2} \sqrt{\log (1/h_{n})} \}.
\label{eq: rate for g}
\end{equation}

\textbf{Step 2}. Recall the process $\mathsf{Z}_{n}^{*}$ defined in (\ref{eq: Zstar}). By Step 1 together with the fact that $\inf_{x \in I} s_{n}(x) \gtrsim h_{n}^{-\alpha+1/2}$, we have 
\begin{align*}
\hat{\mathsf{Z}}_{n}(x) = f_{X}(x)\sqrt{n}h_{n}(\hat{g}(x)-g(x))/s_{n}(x) 
= \mathsf{Z}_{n}^{*}(x) +o_{\Pr} \{ (\log  (1/h_{n}))^{-1/2} \}
\end{align*}
uniformly in $x \in I$. Recall the class of functions $\mF_{n}^{(2)}$ defined in (\ref{eq: function class}), and consider the empirical process indexed by $\mF_{n}^{(2)}$: 
$\nu_{n}(f) =\frac{1}{\sqrt{n}}\sum_{j=1}^{n} \{ f(Y_{j},W_{j}) - \Ep[ f(Y,W) ] \}$ for $f \in \mF_{n}^{(2)}$.
We apply Theorem 2.1 in \cite{ChChKa16} to approximate $\| \nu_{n} \|_{\mF_{n}^{(2)}} = \| \mathsf{Z}_{n}^{*} \|_{I}$ by the supremum of a Gaussian process. To this end, we shall verify the conditions in \cite{ChChKa16}. 

First, from the covering number bound for $\mF_{n}^{(2)}$ given in Lemma \ref{lem: entropy} and finiteness of the second moment of $F_{n}^{(2)}(Y,W)$, there exists a tight Gaussian process $G_{n}$ in $\ell^{\infty} (\mF_{n}^{(2)})$ with mean zero and the same covariance function as $\{ \nu_{n}(f) : f \in \mF_{n}^{(2)} \}$. Extend $\nu_{n}$ linearly  to $\mF_{n}^{(2)} \cup (-\mF_{n}^{(2)}) = \{ f,-f : f \in \mF_{n}^{(2)} \}$, and observe that $\| \nu_{n} \|_{\mF_{n}^{(2)}} = \sup_{f \in \mF_{n}^{(2)} \cup (-\mF_{n}^{(2)})} \nu_{n}(f)$.
Note that from Theorem 3.7.28 in \cite{GiNi16}, $G_{n}$ extends to the linear hull of $\mF_{n}^{(2)}$ in such a way that $G_{n}$ has linear sample paths, so that $\| G_{n} \|_{\mF_{n}^{(2)}} = \sup_{f \in \mF_{n}^{(2)} \cup (-\mF_{n}^{(2)})} G_{n}(f)$, and in addition $G_{n}$ has uniformly continuous paths on the symmetric convex hull of $\mF^{(2)}$ for the intrinsic $L^{2}$ pseudo-metric. It is not difficult to verify that the covering number of $\mF_{n}^{(2)} \cup (-\mF_{n}^{(2)})$ is at most twice that of $\mF_{n}^{(2)}$. In particular, $\{ G_{n}(f) : f \in \mF_{n}^{(2)} \cup (-\mF^{(2)}_{n}) \}$ is a tight Gaussian process in $\ell^{\infty}(\mF_{n}^{(2)} \cup (-\mF_{n}^{(2)}))$ with mean zero and the same covariance function as $\{ \nu_{n}(f) : f \in \mF_{n}^{(2)} \cup (-\mF_{n}^{(2)}) \}$. 

Next, from Lemma \ref{lem: moment bound}-(iii), $\Ep[ |Y K_{n}((x-W)/h_{n})|^{2+\ell}] \lesssim h_{n}^{-(2+\ell)\alpha+1}$ for $\ell=0,1,2$, 
so that $\sup_{f \in \mF_{n}^{(2)}} \Ep[|f(Y,W)|^{2+\ell}] \lesssim h_{n}^{-\ell/2}$ for $\ell=0,1,2$. 
Furthermore, $\Ep[ |F_{n}^{(2)}(Y,W)|^{4} ] \lesssim h_{n}^{-2} (\Ep[ Y^{4}]+\| g \|_{I}^{4}) \lesssim h_{n}^{-2}$. Therefore, applying Theorem 2.1 in \cite{ChChKa16} to $\mathcal{F}_{n}^{(2)} \cup (-\mF_{n}^{(2)})$ with $B(f) \equiv 0, q=4, A \lesssim 1, v \lesssim 1, b \lesssim h_{n}^{-1/2}, \sigma \lesssim 1$ and $\gamma \lesssim 1/\log n$, yields that there exists a random variable $V_{n}$ having the same distribution as $\| G_{n} \|_{\mF_{n}^{(2)}}$ such that
\[
\left| \| \nu_{n} \|_{\mF_{n}^{(2)}} - V_{n} \right | = O_{\Pr} \left \{ \frac{(\log n)^{5/4}}{n^{1/4} h_{n}^{1/2}} + \frac{\log n}{(nh_{n})^{1/6}} \right \} = o_{\Pr} \{ (\log (1/h_{n}))^{-1/2} \}. 
\]

Now, for $f_{n,x} (y,w) = \{y-g(x)\} K_{n}((x-w)/h_{n})/s_{n}(x)$, define
$\mathsf{Z}_{n}^{G}(x)  = G_{n} (f_{n,x})$ for $x \in I$, 
and observe that $\mathsf{Z}_{n}^{G}$ is a tight Gaussian process in $\ell^{\infty}(I)$ with mean zero and the same covariance function as $\mathsf{Z}_{n}^{*}$ such that $\| \mathsf{Z}_{n}^{G} \|_{I}$ has the same distribution as $V_{n}$. 
Since $| \| \hat{\mathsf{Z}}_{n} \|_{I} - V_{n}  | \leq | \| \hat{\mathsf{Z}}_{n} \|_{I} - \| \mathsf{Z}_{n}^{*} \|_{I} | + | \| \mathsf{Z}_{n}^{*} \|_{I} - V_{n} | = o_{\Pr} \{ (\log (1/h_{n}))^{-1/2} \}$, there exists a sequence $\Delta_{n} \downarrow 0$ such that $\Pr \{  | \| \hat{\mathsf{Z}}_{n} \|_{I} - V_{n} | > \Delta_{n} (\log (1/h_{n}))^{-1/2} \} \leq \Delta_{n}$ (which follows from the fact that the Ky Fan metric metrizes convergence in probability; see Theorem 9.2.2 in \cite{Du02}). Observe that for any $z \in \R$, 
\begin{align*}
\Pr \{ \| \hat{\mathsf{Z}}_{n} \|_{I} \leq z \} 
\leq \Pr \{ \| \mathsf{Z}_{n}^{G} \|_{I} \leq z+\Delta_{n}(\log (1/h_{n}))^{-1/2} \} +  \Delta_{n}.
\end{align*}
The anti-concentration inequality for the supremum of a Gaussian process (Lemma \ref{lem: anti-concentration}) then yields that 
\begin{align*}
&\Pr \{ \| \mathsf{Z}_{n}^{G} \|_{I} \leq z+\Delta_{n}(\log (1/h_{n}))^{-1/2} \} \\
&\quad \leq \Pr \{ \| \mathsf{Z}_{n}^{G} \|_{I} \leq z \} + 4 \Delta_{n}(\log (1/h_{n}))^{-1/2} \{ 1+\Ep[ \| \mathsf{Z}_{n}^{G} \|_{I} ] \}. 
\end{align*}
From the covering number bound for $\mF_{n}^{(2)}$ given in Lemma \ref{lem: entropy}, together with the facts that $\Ep[F_{n}^{(2)}(Y,W)^{2}] \lesssim h_{n}^{-1}$ and $\Var (f_{n,x}(Y,W)) = 1$ for all $x \in I$, Dudley's entropy integral bound (cf. van der Vaart and Wellner, 1996, Corollary 2.2.8) yields that 
\[
\Ep[ \| \mathsf{Z}_{n}^{G} \|_{I} ] = \Ep [ \| G_{n} \|_{\mF_{n}^{(2)}}] \lesssim \int_{0}^{1} \sqrt{1+\log (1/(\delta\sqrt{h_{n}}))} d\delta \lesssim \sqrt{\log (1/h_{n})},
\]
which implies that $
\Pr \{ \| \mathsf{Z}_{n}^{G} \|_{I} \leq z+\Delta_{n}(\log (1/h_{n}))^{-1/2} \} \leq \Pr \{ \| \mathsf{Z}_{n}^{G} \|_{I} \leq z \}  + o(1)$
uniformly in $z \in \R$. Likewise, we have $\Pr \{ \| \hat{\mathsf{Z}}_{n} \|_{I} \leq z \} \geq \Pr \{ \| \mathsf{Z}_{n}^{G} \|_{I} \leq z \} -o(1)$ uniformly in $z \in \R$. This completes the proof. 
\end{proof}

\subsection{Proof of Theorem \ref{thm: validity of MB}}

We first prove the following technical lemma. 
\begin{lemma}
$\| \hat{s}_{n}^{2}(\cdot)/s_{n}^{2}(\cdot) - 1 \|_{I} =o_{\Pr}\{ (\log (1/h_{n}) )^{-1} \}$. 
\end{lemma}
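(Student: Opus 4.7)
The plan is to show that $\hat s_n^2(x)$ is uniformly close to $E[\{Y-g(x)\}^2 K_n^2((x-W)/h_n)]$, which in turn is close to $s_n^2(x)$ up to the negligible additive error $A_n^2(x)=O(h_n^{2\beta+2})$. Since $\inf_{x\in I} s_n^2(x)\gtrsim h_n^{-2\alpha+1}$ by Lemma \ref{lem: moment bound}(ii), and $h_n^{2\alpha+2\beta+1}\log(1/h_n)\to 0$ follows from the last condition in (\ref{eq: bandwidth}), the contribution of $A_n^2(x)$ to the ratio $\hat s_n^2/s_n^2 - 1$ is $o\{(\log(1/h_n))^{-1}\}$, and what remains is to prove
\[
\bigl\| \hat s_n^2(x) - E[\{Y-g(x)\}^2 K_n^2((x-W)/h_n)] \bigr\|_I = o_{\Pr}\bigl\{ h_n^{-2\alpha+1}/\log(1/h_n)\bigr\}.
\]

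First I would decompose this difference by introducing the intermediate quantity $\bar s_n^2(x) := n^{-1}\sum_{j=1}^n \{Y_j-g(x)\}^2 K_n^2((x-W_j)/h_n)$, so that $\hat s_n^2 - E[\bar s_n^2] = (\hat s_n^2 - \bar s_n^2) + (\bar s_n^2 - E[\bar s_n^2])$. For the fluctuation term $\bar s_n^2 - E[\bar s_n^2]$, I would apply the maximal inequality of Lemma \ref{lem: maximal inequality} to the class $\mathcal{F}_n^{(4)}$ (or equivalently $\mathcal{F}_n^{(3)}$ rescaled by $s_n^2$), using the uniform entropy bound from Lemma \ref{lem: entropy}, the variance bound $\sup_{f\in \mathcal{F}_n^{(4)}} E[f(Y,W)^2]=O(h_n^{-1})$ obtained from Lemma \ref{lem: moment bound}(iii) and the lower bound on $s_n^2$, and the envelope estimate $E[(F_n^{(4)}(Y,W))^2]=O(h_n^{-2})$ (which uses only $E[Y^4]<\infty$). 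Combined with a crude $L^1$ bound on $\max_j F_n^{(4)}(Y_j,W_j)^2$ via Lemma \ref{lem: moment inequality}, this gives $\|\bar s_n^2 - E[\bar s_n^2]\|_I/s_n^2 = o_{\Pr}\{(\log(1/h_n))^{-1}\}$ under condition (\ref{eq: bandwidth}).

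For $\hat s_n^2 - \bar s_n^2$, I would expand $(Y_j - \hat g(x))^2 = (Y_j - g(x))^2 - 2(Y_j-g(x))(\hat g(x)-g(x)) + (\hat g(x)-g(x))^2$, yielding three pieces to bound uniformly in $x$:
\begin{align*}
(i)\ & n^{-1}\sum_{j=1}^n \{Y_j-g(x)\}^2 \bigl[\hat K_n^2 - K_n^2\bigr]((x-W_j)/h_n),\\
(ii)\ & 2(\hat g(x)-g(x))\, n^{-1}\sum_{j=1}^n \{Y_j-g(x)\}\hat K_n^2((x-W_j)/h_n),\\
(iii)\ & (\hat g(x)-g(x))^2\, n^{-1}\sum_{j=1}^n \hat K_n^2((x-W_j)/h_n).
\end{align*}
For (i), I would use $\|\hat K_n^2 - K_n^2\|_\R \lesssim h_n^{-\alpha}\|\hat K_n - K_n\|_\R$ together with a sup-norm control on $\hat K_n - K_n$ obtained, as in Step 1 of the proof of Theorem \ref{thm: Gaussian approximation}, from Lemma \ref{lem: empirical chf} and the lower bound on $|\varphi_\varepsilon|$ in Condition (iii); this gives a term of order $O_{\Pr}(h_n^{-3\alpha}/\sqrt m)$, which is $o_{\Pr}\{h_n^{-2\alpha+1}/\log(1/h_n)\}$ by the first condition in (\ref{eq: bandwidth}). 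For (ii) and (iii), I would combine the uniform rate $\|\hat g - g\|_I = O_{\Pr}\{h_n^{-\alpha}(nh_n)^{-1/2}\sqrt{\log(1/h_n)}\}$ derived in Step 1 of the proof of Theorem \ref{thm: Gaussian approximation} with the crude kernel bound $\|\hat K_n^2\|_\R = O_{\Pr}(h_n^{-2\alpha})$ and $n^{-1}\sum_j |Y_j|=O_{\Pr}(1)$.

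The main technical obstacle is term (ii): the crude bound on $n^{-1}\sum_j (Y_j-g(x))\hat K_n^2$ scales like $h_n^{-2\alpha}$, and multiplying by $\|\hat g - g\|_I$ yields a term whose order, after dividing by $s_n^2(x)$, is close to the target $(\log(1/h_n))^{-1}$. Getting through requires either replacing the crude sup-norm bound by a tighter one (e.g., comparing $n^{-1}\sum_j (Y_j-g(x))\hat K_n^2$ with its expectation via another application of Lemma \ref{lem: maximal inequality} on a class analogous to $\mathcal{F}_n^{(3)}$, and observing that the expectation is $O(h_n^{-2\alpha+1})$), or exploiting all three parts of the bandwidth condition (\ref{eq: bandwidth}) simultaneously. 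Summing the three contributions yields the stated $o_{\Pr}\{(\log(1/h_n))^{-1}\}$ rate on $\hat s_n^2/s_n^2-1$.
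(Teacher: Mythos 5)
Your proposal is correct and follows essentially the same route as the paper's proof: the same expansion of $\{Y_j-\hat g(x)\}^2\hat K_n^2$ into a main term plus $(\hat g-g)$-squared, cross, and $\hat K_n^2-K_n^2$ remainders, the same treatment of the cross term by splitting $n^{-1}\sum_j\{Y_j-g(x)\}K_n^2((x-W_j)/h_n)$ into its $O(h_n^{-2\alpha+1})$ expectation plus an empirical-process fluctuation over $\mF_n^{(3)}$, and the same final step controlling the centered $\mF_n^{(4)}$-indexed average and the negligible $A_n^2/s_n^2$ contribution. The one point you flag as the "main technical obstacle" (term (ii)) is resolved in the paper exactly by the tighter expectation-plus-fluctuation bound you describe, so there is no gap.
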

\begin{proof}
Observe that
\begin{align*}
&\{ Y_{j}-\hat{g}(x) \}^{2} \hat{K}_{n}^{2}((x-W_{j})/h_{n}) = \{ Y_{j} - g(x) \}^{2} K_{n}^{2}((x-W_{j})/h_{n}) \\
&\quad + \{ g(x) - \hat{g}(x) \}^{2} K_{n}^{2}((x-W_{j})/h_{n}) \\
&\quad + 2\{ g(x) - \hat{g}(x) \} \{ Y_{j}-g(x) \} K_{n}^{2}((x-W_{j})/h_{n}) \\
&\quad + \{ Y_{j}-\hat{g}(x) \}^{2} \{ \hat{K}_{n}^{2}((x-W_{j})/h_{n}) - K_{n}^{2}((x-W_{j})/h_{n}) \},
\end{align*}
so that 
\begin{align}
&\left \| \frac{1}{n} \sum_{j=1}^{n} \{ Y_{j}-\hat{g}(\cdot) \}^{2} \hat{K}_{n}^{2}((\cdot-W_{j})/h_{n}) - \frac{1}{n} \sum_{j=1}^{n} \{ Y_{j} - g(\cdot) \}^{2} K_{n}^{2}((\cdot-W_{j})/h_{n}) \right \|_{I} \notag \\
&\leq  O_{\Pr}(h_{n}^{-2\alpha}) \| \hat{g} - g \|_{I}^{2} + 2 \| \hat{g} - g \|_{I} \left \| \frac{1}{n} \sum_{j=1}^{n} \{ Y_{j}-g(\cdot) \} K_{n}^{2}((\cdot-W_{j})/h_{n}) \right \|_{I} \notag \\
&\quad + \frac{2}{n} \sum_{j=1}^{n} (Y_{j}^{2} +\| \hat{g} \|_{I}^{2})\| \hat{K}_{n}^{2} - K_{n}^{2} \|_{\R}. \label{eq: variance bound 1}
\end{align}
From (\ref{eq: rate for g}), the first term on the right hand side of (\ref{eq: variance bound 1}) is 
\[
O_{\Pr}\{ h_{n}^{-4\alpha} (nh_{n})^{-1}\log (1/h_{n}) \}.
\]
Since 
\begin{align*}
\| \hat{K}_{n} - K_{n} \|_{\R} &\lesssim \int_{\R} \left | \frac{1}{\hat{\varphi}_{\U}(t/h_{n})} - \frac{1}{\varphi_{\U}(t/h_{n})} \right | | \varphi_{K}(t) | dt \\
& \leq O_{\Pr}(h_{n}^{-2\alpha}) \int_{\R} | \hat{\varphi}_{\U}(t/h_{n}) - \varphi_{\U}(t/h_{n})| |\varphi_{K}(t)| dt \\
&= O_{\Pr}(h_{n}^{-2\alpha}m^{-1/2}), 
\end{align*}
we have that 
\[
\| \hat{K}_{n}^{2} - K_{n}^{2} \|_{\R} \leq \| \hat{K}_{n}-K_{n} \|_{\R} \| \hat{K}_{n}+K_{n} \|_{\R}=O_{\Pr} (h_{n}^{-3\alpha} m^{-1/2}),
\]
which implies that the last term on the right hand side on (\ref{eq: variance bound 1}) is \\
$O_{\Pr} (h_{n}^{-3\alpha} m^{-1/2})$. 
To bound the second term, observe first that, since $\Ep[ Y \mid W=w] f_{W}(w)= \left ( (g f_{X})*f_{\U} \right )(w)$ is bounded (in absolute value) by $\| gf_{X} \|_{\R}$, 
\begin{align*}
&\| \Ep[ \{ Y -g(\cdot) \} K_{n}^{2}((\cdot-W)/h_{n}) ] \|_{I} \\
&\quad \leq h_{n} (\| gf_{X} \|_{\R} + \| g \|_{I} \| f_{W} \|_{\R}) \int_{\R} K_{n}^{2}(w) dw \lesssim h_{n}^{-2\alpha+1}.
\end{align*}
Hence, 
\begin{align*}
&\left \| \frac{1}{n} \sum_{j=1}^{n} \{ Y_{j}-g(\cdot) \} K_{n}^{2}((\cdot-W_{j})/h_{n}) \right \|_{I} \leq \underbrace{\| \Ep[ \{ Y -g(\cdot) \} K_{n}^{2}((\cdot-W)/h_{n}) ] \|_{I}}_{=O(h_{n}^{-2\alpha+1})} \\
&\quad + \left \| \frac{1}{n} \sum_{j=1}^{n} \{ Y_{j}-g(\cdot) \} K_{n}^{2}((\cdot-W_{j})/h_{n})-\Ep[\{ Y-g(\cdot) \} K_{n}^{2}((\cdot-W)/h_{n}) ]   \right \|_{I}.
\end{align*}
The second term on the right hand side is identical to 
\[
\left \| \frac{1}{n} \sum_{j=1}^{n} \{ f(Y_{j},W_{j}) - \Ep[ f(Y,W) ] \} \right \|_{\mF_{n}^{(3)}}.
\]
In view of the covering number bound for $\mF_{n}^{(3)}$ given in Lemma \ref{lem: entropy}, together with Theorem 2.14.1 in \cite{vaWe96}, the expectation of the last term is 
\[
\lesssim n^{-1/2} \sqrt{\Ep[\{ F_{n}^{(3)}(Y,W)]\}^{2}]} \lesssim h_{n}^{-2\alpha}n^{-1/2}.
\]
Therefore, the right hand side on (\ref{eq: variance bound 1}) is 
\begin{align*}
O_{\Pr} \Bigg \{ &h_{n}^{-4\alpha} (nh_{n})^{-1} \log (1/h_{n}) +h_{n}^{-3\alpha} m^{-1/2} \\
&\quad + (nh_{n})^{-1/2} \sqrt{\log (1/h_{n})} (h_{n}^{-2\alpha+1} +  h_{n}^{-2\alpha} n^{-1/2})  \Bigg \},
\end{align*}
which is $o_{\Pr} \{ h_{n}^{-2\alpha+1} (\log (1/h_{n}))^{-1} \}$. Hence, because $\inf_{x \in I} s_{n}^{2}(x) \gtrsim h_{n}^{-2\alpha+1}$, we have 
\begin{align*}
&\frac{1}{s_{n}^{2}(x)n} \sum_{j=1}^{n} \{ Y_{j}-\hat{g}(x) \}^{2} \hat{K}_{n}^{2}((x-W_{j})/h_{n}) \\
&\quad = \frac{1}{s_{n}^{2}(x)n}\sum_{j=1}^{n} \{ Y_{j} - g(x) \}^{2} K_{n}^{2}((x-W_{j})/h_{n}) + o_{\Pr}\{ (\log (1/h_{n}))^{-1} \}
\end{align*}
uniformly in $x \in I$. Since $\| A_{n}^{2}(\cdot)/s_{n}^{2}(\cdot) \|_{I} =O(h_{n}^{2\alpha+2\beta+1})$, it remains to prove that
\begin{align*}
&\Bigg \| \frac{1}{s_{n}^{2}(\cdot)n}\sum_{j=1}^{n}\{ Y_{j} - g(\cdot) \}^{2} K_{n}^{2}((\cdot-W_{j})/h_{n}) \\
&\qquad - \Ep \left[\frac{1}{s_{n}^{2}(\cdot)} \{ Y-g(\cdot) \}^{2}K_{n}^{2}((\cdot-W))/h_{n}) \right] \Bigg \|_{I}  \\
&=\left \| \frac{1}{n} \sum_{j=1}^{n} \{ f(Y_{j},W_{j}) - \Ep[f(Y,W)] \} \right \|_{\mF_{n}^{(4)}} 
\end{align*}
is $o_{\Pr}\{ (\log (1/h_{n}))^{-1} \}$. In view of the covering number bound for $\mF_{n}^{(4)}$ given in Lemma \ref{lem: entropy}, together with Theorem 2.14.1 in \cite{vaWe96}, the expectation of the last term is 
\[
\lesssim n^{-1/2} \sqrt{\Ep[\{ F_{n}^{(4)}(Y,W)]\}^{2}]} \lesssim h_{n}^{-1}n^{-1/2} = o\{ (\log (1/h_{n}))^{-1} \}.
\]
This completes the proof. 
\end{proof}

\begin{proof}[Proof of Theorem \ref{thm: validity of MB}]
We divide the proof into several steps.

\textbf{Step 1}. Define 
\begin{align*}
\mathsf{Z}_{n}^{\xi} (x)= \frac{1}{s_{n}(x) \sqrt{n}} \sum_{j=1}^{n} \xi_{j} &\Big[ \{ Y_{j} - g(x) \} K_{n}((x-W_{j})/h_{n}) \\
&\qquad - n^{-1} {\textstyle \sum}_{j'=1}^{n} \{ Y_{j'} - g(x) \} K_{n}((x-W_{j'})/h_{n}) \Big ]
\end{align*}
for $x \in I$. We first prove that 
\[
\sup_{z \in \R} \left | \Pr \{ \| \mathsf{Z}_{n}^{\xi} \|_{I} \leq z \mid \mathcal{D}_{n} \} - \Pr \{ \| \mathsf{Z}_{n}^{G} \|_{I} \leq z \}  \right | \stackrel{\Pr}{\to} 0.
\]
To this end, we shall apply Theorem 2.2 in \cite{ChChKa16} to $\mF_{n}^{(2)} \cup (-\mF_{n}^{(2)})$. Let 
\[
\nu_{n}^{\xi} (f) = \frac{1}{\sqrt{n}} \sum_{j=1}^{n} \xi_{j} \{ f(Y_{j},W_{j}) - n^{-1} {\textstyle \sum}_{j'=1}^{n} f(Y_{j'},W_{j'}) \}, \ f \in \mF_{n}^{(2)}.
\]
Then applying Theorem 2.2 in \cite{ChChKa16} to $\mF_{n}^{(2)} \cup (-\mF_{n}^{(2)})$ with $B(f) \equiv 0, q=4,A \lesssim 1, v \lesssim 1, b \lesssim h_{n}^{-1/2}, \sigma \lesssim 1$ and $\gamma \lesssim 1/\log n$, yields that there exists a random variable $V_{n}^{\xi}$ of which the conditional distribution given $\mathcal{D}_{n}$ is identical to the distribution of $\| G_{n} \|_{\mF_{n}^{(2)}} (= \| \mathsf{Z}_{n}^{G} \|_{I})$, and such that 
\[
\left |  \| \nu_{n}^{\xi} \|_{\mF_{n}^{(2)}}- V_{n}^{\xi} \right | = O_{\Pr} \left \{ \frac{(\log n)^{9/4}}{n^{1/4}h_{n}^{1/2}} + \frac{(\log n)^{2}}{(nh_{n})^{1/4}} \right \} = o_{\Pr} \{ (\log (1/h_{n}) )^{-1/2} \},
\]
which shows that there exists a sequence $\Delta_{n} \downarrow 0$ such that
\[
\Pr \left \{ \left |  \| \nu_{n}^{\xi} \|_{\mF_{n}^{(2)}}- V_{n}^{\xi} \right | > \Delta_{n} (\log (1/h_{n}) )^{-1/2}  \mid \mathcal{D}_{n} \right \} \stackrel{\Pr}{\to} 0.
\]
Since $\| \nu_{n}^{\xi} \|_{\mF_{n}^{(2)}} = \| \mathsf{Z}_{n}^{\xi} \|_{I}$, we have 
\begin{align*}
\Pr \{ \| \mathsf{Z}_{n}^{\xi} \|_{I} \leq z \mid \mathcal{D}_{n} \} &\leq \Pr \{ V_{n}^{\xi} \leq z + \Delta_{n} (\log (1/h_{n}) )^{-1/2}  \mid \mathcal{D}_{n} \} + o_{\Pr} (1) \\
&=\Pr \{ \| \mathsf{Z}_{n}^{G} \|_{I}  \leq z + \Delta_{n} (\log (1/h_{n}) )^{-1/2}  \} + o_{\Pr} (1)
\end{align*}
uniformly in $z \in \R$, and the anti-concentration inequality for the supremum of a Gaussian process (Lemma \ref{lem: anti-concentration})  yields that
\[
\Pr \{ \| \mathsf{Z}_{n}^{G} \|_{I}  \leq z + \Delta_{n} (\log (1/h_{n}) )^{-1/2}  \}  \leq \Pr \{ \| \mathsf{Z}_{n}^{G} \|_{I} \leq z \} + o(1)
\]
uniformly in $z \in \R$. Likewise, we have $\Pr \{ \| \mathsf{Z}_{n}^{\xi} \|_{I} \leq z \mid \mathcal{D}_{n} \} \geq \Pr \{ \| \mathsf{Z}_{n}^{G} \|_{I} \leq z \} - o_{\Pr}(1)$
uniformly in $z \in \R$. 

\textbf{Step 2}. In view of the proof of Step 1, in order to prove the result (\ref{eq: validity of MB}), it is enough to prove that $\| \hat{\mathsf{Z}}_{n}^{\xi} - \mathsf{Z}_{n}^{\xi} \|_{I} = o_{\Pr} \{ (\log (1/h_{n}))^{-1/2} \}$. 
To this end, define
\[
\tilde{\mathsf{Z}}_{n}^{\xi} (x) = \frac{1}{s_{n}(x)\sqrt{n}} \sum_{j=1}^{n} \xi_{j} \{ Y_{j}-\hat{g}(x) \} \hat{K}_{n}((x-W_{j})/h_{n})
\]
for $x \in I$, and we first prove that 
\begin{equation}
\| \tilde{\mathsf{Z}}_{n}^{\xi} - \mathsf{Z}_{n}^{\xi} \|_{I} = o_{\Pr} \{ (\log (1/h_{n}))^{-1/2} \}.
\label{eq: intermediate}
\end{equation}
We begin with noting that 
\begin{align*}
&\frac{1}{n} \sum_{j=1}^{n} \{ Y_{j} - g(x) \}K_{n}((x-W_{j})/h_{n}) \\
&=h_{n} \{\hat{\mu}^{*}(x) - \Ep [ \hat{\mu}^{*}(x) ] \} - h_{n}g(x) \{ \hat{f}_{X}^{*} (x) - \Ep[ \hat{f}_{X}^{*}(x) ]  \} + A_{n}(x) \\
&=O_{\Pr} \{ h_{n}^{-\alpha+1} (nh_{n})^{-1/2}\sqrt{\log (1/h_{n})} \}
\end{align*}
uniformly in $x \in I$, so that it suffices to verify that 
\begin{align*}
&\Bigg \| \frac{1}{s_{n}(\cdot)\sqrt{n}}  \Bigg [\sum_{j=1}^{n} \xi_{j} \{Y_{j}-\hat{g}(\cdot) \} \hat{K}_{n}((\cdot-W_{j})/h_{n}) \\
&\quad -  \sum_{j=1}^{n} \xi_{j} \{Y_{j}-g(\cdot) \} \ K_{n}((\cdot-W_{j})/h_{n})\Bigg ] \Bigg \|_{I}
\end{align*}
is $o_{\Pr} \{ (\log (1/h_{n}))^{-1/2} \}$. Since $\| 1/s_{n} \|_{I} \lesssim h_{n}^{\alpha-1/2}$, the last term is 
\begin{align*}
&\lesssim h_{n}^{\alpha-1/2}n^{-1/2} \Bigg \{ \left \| \sum_{j=1}^{n} \xi_{j} Y_{j} \{ \hat{K}_{n}((\cdot-W_{j})/h_{n}) - K_{n}((\cdot-W_{j})/h_{n}) \} \right \|_{I} \\
&\quad + \| \hat{g} - g \|_{I} \left \| \sum_{j=1}^{n} \xi_{j}  \hat{K}_{n}((\cdot-W_{j})/h_{n}) \right \|_{I} \\
&\quad + \| g \|_{I} \left \| \sum_{j=1}^{n} \xi_{j}\{ \hat{K}_{n}((\cdot-W_{j})/h_{n}) - K_{n}((x-W_{j})/h_{n}) \} \right \|_{I} \Bigg \} \\
&=: h_{n}^{\alpha-1/2}n^{-1/2} \{ I_{n}+II_{n}+III_{n} \}.
\end{align*}
Step 2 in the proof of Theorem 2 in \cite{KaSa16} shows that $h_{n}^{\alpha-1/2}n^{-1/2}III_{n} = o_{\Pr} \{ (\log (1/h_{n}))^{-1/2} \}$. 
For the second term $II_{n}$, observe that 
\begin{align*}
II_{n} &\lesssim \| \hat{g} - g \|_{I} \int_{\R} \left | \sum_{j=1}^{n} \xi_{j} e^{itW_{j}/h_{n}} \right | \left | \frac{\varphi_{K}(t)}{\hat{\varphi}_{\U} (t/h_{n})} \right | dt \\
&\leq O_{\Pr}( h_{n}^{-\alpha}) \| \hat{g} - g \|_{I} \int_{-1}^{1} \left | \sum_{j=1}^{n} \xi_{j} e^{itY_{j}/h_{n}} \right | dt\\
& =O_{\Pr} \{h_{n}^{-2\alpha-1/2} \sqrt{\log (1/h_{n})} \},
\end{align*}
so that $h_{n}^{\alpha-1/2}n^{-1/2} II_{n} = O_{\Pr} \{ h_{n}^{-\alpha-1}n^{-1/2} \sqrt{\log (1/h_{n})} \}$, and the right hand side is $o_{\Pr} \{ (\log (1/h_{n}))^{-1/2} \}$ under our assumption.
For the first term $I_{n}$, observe that 
\begin{align*}
I_{n} &\lesssim \int_{\R} \left | \sum_{j=1}^{n} \xi_{j}Y_{j} e^{itW_{j}/h_{n}} \right | \left | \frac{1}{\hat{\varphi}_{\U}(t/h_{n})} - \frac{1}{\varphi_{\U}(t/h_{n})} \right | | \varphi_{K}(t) | dt \\
&\leq \left \{ \int_{-1}^{1} \left | \sum_{j=1}^{n} \xi_{j}Y_{j} e^{itW_{j}/h_{n}} \right |^{2}dt \right \}^{1/2} \left \{ \int_{-1}^{1} \left | \frac{1}{\hat{\varphi}_{\U}(t/h_{n})} - \frac{1}{\varphi_{\U}(t/h_{n})} \right |^{2} dt \right \}^{1/2} \\
&=O_{\Pr} (n^{1/2} h_{n}^{-2\alpha} m^{-1/2}),
\end{align*}
so that $h_{n}^{\alpha-1/2}n^{-1/2} I_{n} = o_{\Pr} \{ (\log (1/h_{n}))^{-1} \}$. Hence we have proved (\ref{eq: intermediate}). 

Note that the result of Step 1 and the fact that $\Ep[ \| \mathsf{Z}_{n}^{G} \|_{I}] = O(\sqrt{\log (1/h_{n})})$ imply that $\| \mathsf{Z}_{n}^{\xi} \|_{I} = O_{\Pr}(\sqrt{\log (1/h_{n})})$, which in turn implies that $\| \tilde{\mathsf{Z}}_{n}^{\xi} \|_{I} = O_{\Pr}(\sqrt{\log (1/h_{n})})$. Hence 
\[
\| \hat{\mathsf{Z}}_{n}^{\xi} - \tilde{\mathsf{Z}}_{n}^{\xi} \|_{I}  \leq \| s_{n}(\cdot)/\hat{s}_{n}(\cdot) - 1 \|_{I} \| \tilde{\mathsf{Z}}_{n}^{\xi} \|_{I} = o_{\Pr} \{ (\log (1/h_{n}))^{-1/2} \},
\]
which leads to (\ref{eq: validity of MB}). 

\textbf{Step 3}. We shall prove the last two assertions of the theorem. Observe that 
\begin{align*}
&\underbrace{\frac{\hat{f}_{X}(x) \sqrt{n}h_{n}(\hat{g}(x)-g(x))}{\hat{s}_{n}(x)}}_{=:\hat{\mathsf{Z}}_{n}^{\dagger}(x)} - \hat{\mathsf{Z}}_{n}(x) \\
&= \{ \hat{f}_{X}(x) - f_{X}(x) \}\frac{ \sqrt{n}h_{n}(\hat{g}(x)-g(x))}{\hat{s}_{n}(x)}  + \left \{ \frac{s_{n}(x)}{\hat{s}_{n}(x)} - 1 \right \} \hat{\mathsf{Z}}_{n}(x),
\end{align*}
and the right hand side is $o_{\Pr} \{ (\log (1/h_{n}))^{-1/2} \}$ uniformly in $x \in I$. To see this,
since $\| \hat{g} - g \|_{I} = O_{\Pr} \{ h_{n}^{-\alpha} (nh_{n})^{-1/2}\sqrt{\log (1/h_{n})}\}$ and $\| \hat{f}_{X} - f_{X} \|_{I} = O_{\Pr}  \{h_{n}^{-\alpha} (nh_{n})^{-1/2}\sqrt{\log (1/h_{n})} \}$ (which follows from Corollary 1 in \cite{KaSa16}), the right hand side on the above displayed equation is 
\begin{align*}
&O_{\Pr} \{ h_{n}^{-\alpha} (nh_{n})^{-1/2}\sqrt{\log (1/h_{n})}\} \times O_{\Pr}(\sqrt{\log (1/h_{n})}) \\
&\quad + o_{\Pr}\{ (\log (1/h_{n}))^{-1} \} \times O_{\Pr} (\sqrt{\log (1/h_{n})}) \\
&= o_{\Pr} \{ (\log (1/h_{n}))^{-1/2} \}
\end{align*}
uniformly in $x \in I$. 
Now, Theorem \ref{thm: Gaussian approximation}  and the anti-concentration inequality for the supremum of a Gaussian process (Lemma \ref{lem: anti-concentration}) yield that 
\[
\sup_{z \in \R} \left | \Pr \{ \| \hat{\mathsf{Z}}_{n}^{\dagger} \|_{I} \leq z \} - \Pr \{ \| \mathsf{Z}_{n}^{G} \|_{I} \leq z \} \right | \to 0.
\]
We are to show that $\Pr \{ \| \hat{\mathsf{Z}}_{n}^{\dagger}\|_{I} \leq \hat{c}_{n}(1-\tau) \} \to 1-\tau$.
From the result (\ref{eq: validity of MB}), there exists a sequence $\Delta_{n} \downarrow 0$ such that with probability greater than $1-\Delta_{n}$, 
\begin{equation}
\sup_{z \in \R} \left | \Pr \{ \| \hat{\mathsf{Z}}^{\xi} \|_{I} \leq z \mid \mathcal{D}_{n} \} - \Pr \{ \| \mathsf{Z}_{n}^{G} \|_{I} \leq z \} \right | \leq \Delta_{n},
\label{eq: event}
\end{equation}
and let $\mathcal{E}_{n}$ be the event that (\ref{eq: event}) holds. Taking $\Delta_{n} \downarrow 0$ more slowly if necessary, we have that $\sup_{z \in \R}  | \Pr \{ \| \hat{\mathsf{Z}}_{n}^{\dagger} \|_{I} \leq z \} - \Pr \{ \| \mathsf{Z}_{n}^{G} \|_{I} \leq z \} | \leq \Delta_{n}$. 
Recall that $c_{n}^{G}(1-\tau)$ is the $(1-\tau)$-quantile of $\| \mathsf{Z}_{n}^{G} \|_{I}$, and observe that on the event $\mathcal{E}_{n}$, 
\[
\Pr \{ \| \hat{\mathsf{Z}}_{n}^{\xi} \|_{I} \leq c_{n}^{G}(1-\tau+\Delta_{n}) \} \geq \Pr \{ \| \mathsf{Z}_{n}^{G} \|_{I} \leq c_{n}^{G}(1-\tau+\Delta_{n}) \} - \Delta_{n} = 1-\tau,
\]
where the last equality holds since the distribution function of $\| \mathsf{Z}_{n}^{G} \|_{I}$ is continuous (which follows from Lemma \ref{lem: anti-concentration}). Hence on the event $\mathcal{E}_{n}$, it holds that $\hat{c}_{n}(1-\tau) \leq c_{n}^{G}(1-\tau+\Delta_{n})$, so that 
\begin{align*}
&\Pr \{ \| \hat{\mathsf{Z}}_{n}^{\dagger} \|_{I} \leq \hat{c}_{n}(1-\tau) \} \leq \Pr \{ \| \hat{\mathsf{Z}}_{n}^{\dagger} \|_{I} \leq c_{n}^{G}(1-\tau+\Delta_{n}) \} + \Delta_{n} \\
&\quad \leq \Pr \{ \| \mathsf{Z}_{n}^{G} \|_{I} \leq c_{n}^{G}(1-\tau+\Delta_{n}) \} + 2\Delta_{n} = 1-\tau + 3\Delta_{n}. 
\end{align*}
Likewise, we have $\Pr \{ \| \hat{\mathsf{Z}}_{n}^{\dagger} \|_{I} \leq \hat{c}_{n}(1-\tau) \} \geq 1-\tau-3\Delta_{n}$, which shows that $\Pr \{ \| \hat{\mathsf{Z}}_{n}^{\dagger}\|_{I} \leq \hat{c}_{n}(1-\tau) \} \to 1-\tau$ and thus (\ref{eq: validity of MB 2}) holds. 

Finally, the Borell-Sudakov-Tsirelson inequality \citep[][Lemma A.2.2]{vaWe96} yields that 
\[
c_{n}^{G}(1-\tau+\Delta_{n}) \leq \Ep[ \| \mathsf{Z}_{n}^{G} \|_{I}] + \sqrt{2\log (1/(\tau-\Delta_{n}))} \lesssim \sqrt{\log (1/h_{n})},
\]
which implies that $\hat{c}_{n}(1-\tau) = O_{\Pr}(\sqrt{\log (1/h_{n})})$. Furthermore,
\[
\sup_{x \in I} \hat{s}_{n}(x) \leq \sup_{x \in I} s_{n}(x) \cdot \sup_{x \in I} \frac{\hat{s}_{n}(x)}{s_{n}(x)} = O_{\Pr}(h_{n}^{-\alpha+1/2}).
\]
Therefore, the supremum width of the band $\hat{\mathcal{C}}_{1-\tau}$ is
\[
2\sup_{x \in I} \frac{\hat{s}_{n}(x)}{\sqrt{n}h_{n}} \hat{c}_{n}(1-\tau) = O_{\Pr}\left \{ h_{n}^{-\alpha}(nh_{n})^{-1/2}\sqrt{\log (1/h_{n})} \right \}.
\]
This completes the proof. 
\end{proof}

\subsection{Proof of Theorem \ref{thm: validity of MB part 2}}

Let $\hat{\varphi}_{WZ}(t) = n^{-1}\sum_{j=1}^{n} e^{i(t_{1}W_{j} + t_{-1}^{T}Z_{j})}$ and $\hat{\psi}_{YWZ} (t) = n^{-1} \sum_{j=1}^{n} Y_{j} e^{i(t_{1}W_{j}+t_{-1}^{T}Z_{j})}$. 
We first point out that $\hat{\mu}(x,z)$ and $\hat{f}_{XZ}(x,z)$ can be expressed as 
\[
\begin{split}
\hat{\mu}(x,z) = \frac{1}{(2\pi)^{d+1}} \int_{\R^{d+1}} e^{-i(t_{1}x + t_{-1}^{T}z)} \hat{\psi}_{YWZ}(t) \frac{\varphi_{K}(t_{1}h_{n}) \varphi_{L}(t_{-1}h_{n})}{\hat{\varphi}_{U}(t_{1})} dt, \\
\hat{f}_{XZ}(x,z) = \frac{1}{(2\pi)^{d+1}} \int_{\R^{d+1}} e^{-i(t_{1}x + t_{-1}^{T}z)}\hat{\varphi}_{WZ}(t) \frac{\varphi_{K}(t_{1}h_{n}) \varphi_{L}(t_{-1}h_{n})}{\hat{\varphi}_{U}(t_{1})} dt. 
\end{split}
\]
Given these expressions, the proof of the theorem is almost analogous to the proofs of Theorems \ref{thm: Gaussian approximation} and \ref{thm: validity of MB}. Essentially we only have to care about the fact the Jacobian of the change of variables $(x,w) \to (h_{n}x,h_{n}z)$ is $h_{n}^{d+1}$. 
For instance, the conclusions of Lemma \ref{lem: moment bound} change to $\| A_{n} \|_{I \times J} = O(h_{n}^{\beta+d+1})$, $\inf_{(x,z) \in I \times J} s_{n}^{2}(x,z) \gtrsim h_{n}^{-2\alpha+d+1}$ with $s_{n}^{2}(x,z) = \Var (\{ Y - g(x,z) \}K_{n}((x-W)/h_{n})L((z-Z)/h_{n}))$, and $\sup_{(x,z) \in \R \times \R^{d}} \Ep[|Y K_{n}((x-W)/h_{n})L((z-Z)/h_{n})|^{2+\ell}] \lesssim h_{n}^{-(2+\ell) \alpha + d+1}$ for $\ell=0,1,2$. 
Let $\hat{\mu}^{*}(x,z)$ and $\hat{f}_{XZ}^{*}(x,z)$ be defined by replacing $\hat{K}_{n}$ in $\hat{\mu}(x,z)$ and $\hat{f}_{XZ}(x,z)$, respectively, to $K_{n}$. Then the conclusions of Lemma \ref{lem: rate} change to 
\begin{align*}
&\| \hat{f}_{XZ}^{*} - \Ep[ \hat{f}_{XZ}^{*} ]  \|_{\R \times \R^{d}} = O_{\Pr} \{ h_{n}^{-\alpha} (nh_{n}^{d+1})^{-1/2} \sqrt{\log (1/h_{n})} \}, \\
&\|\Ep[ \hat{f}_{XZ}^{*} ] - f_{XZ}  \|_{\R \times \R^{d}} = O(h_{n}^{\beta}) = o\{ h_{n}^{-\alpha} (nh_{n}^{d+1}\log (1/h_{n}))^{-1/2} \},
\end{align*}
and $\| \hat{\mu}^{*} - \Ep[\hat{\mu}^{*} ] \|_{\R \times \R^{d}} = O_{\Pr} \{ h_{n}^{-\alpha} (nh_{n}^{d+1})^{-1/2} \sqrt{\log (1/h_{n})} \}$. Using such estimates, we can show under our assumption that
\[
\hat{g}(x,z)-g(x,z) =  \frac{1}{f_{XZ}(x,z) nh_{n}^{d+1}} \sum_{j=1}^{n} \left [ \{ Y_{j} - g(x,z) \} K_{n}((x-W_{j})/h_{n}) L((z-Z_{j})/h_{n}) - A_{n}(x,z) \right] + o_{\Pr}(r_{n})
\]
with $A_{n}(x,z) = \Ep[\{ Y - g(x,z) \} K_{n}((x-W)/h_{n}) L((z-Z)/h_{n})]$ and $r_{n} = h_{n}^{-\alpha} \{ nh_{n}^{d+1} \log (1/h_{n}) \}^{-1/2}$, where $o_{\Pr}(r_{n})$ is uniform in $(x,z) \in I \times J$. 
Hence, arguing as in Step 2 of the proof of Theorem \ref{thm: Gaussian approximation}, for the process 
\[
\hat{\mathsf{Z}}_{n}(x,z) = f_{XZ}(x,z) \sqrt{n} h_{n}(\hat{g}(x,z) - g(x,z))/s_{n}(x,z),
\]
we obtain the Gaussian approximation: there exists a tight Gaussian process $\{ \mathsf{Z}_{n}^{G}(x,z) ; (x,z) \in I \times J \}$ in $\ell^{\infty}(I \times J)$ with mean zero and covariance function
\[
\begin{split}
&\Ep[\mathsf{Z}_{n}^{G}(x,z) \mathsf{Z}_{n}^{G}(\overline{x},\overline{z})] \\
&=\frac{\Cov \left (\{ Y - g(x,z) \} K_{n}((x-W)/h_{n}) L((z-Z)/h_{n}), \{ Y - g(\overline{x},\overline{z}) \} K_{n}((\overline{x}-W)/h_{n}) L((\overline{z}-Z)/h_{n}) \right)}{s_{n}(x,z) s_{n}(\overline{x},\overline{z})}
\end{split}
\]
 such that 
\[
\sup_{y \in \R} \left| \Pr \{ \| \hat{\mathsf{Z}}_{n} \|_{I \times J} \le y \} - \Pr \{ \| \mathsf{Z}_{n}^{G} \|_{I \times J} \le y \} \right | \to 0.
\]
Likewise, arguing as in the proof of Theorem \ref{thm: validity of MB}, we can show that 
\[
\sup_{y \in \R} \left | \Pr \left \{ \| \hat{\mathsf{Z}}_{n}^{\xi} \|_{I \times J} \le y \mid \mathcal{D}_{n} \right \} - \Pr \{ \| \mathsf{Z}_{n}^{G} \|_{I \times J} \le y \} \right | \stackrel{\Pr}{\to} 0.
\]
This leads to the validity of the confidence band $\hat{\mathcal{C}}_{1-\tau}$. 
The supremum width of the band follows from the fact that $\sup_{(x,z) \in I \times J} \hat{s}_{n}(x,z) = O_{\Pr}(h_{n}^{-\alpha+(d+1)/2})$ and the $(1-\tau)$-quantile of $\| \mathsf{Z}_{n}^{G} \|_{I \times J}$ is $O(\sqrt{\log (1/h_{n})})$. \qed

\subsection{Proof of Theorem \ref{thm: cdf}}
\label{sec: validity of MB}

The proof is completely analogous to those of Theorems \ref{thm: Gaussian approximation} and  \ref{thm: validity of MB}, given the facts that $g(y,x) = \Ep[ 1(Y \leq y) \mid X=x]$ and the function class $\{ 1(\cdot \leq y) : y \in J \}$ is a VC class. Hence, we omit the details for brevity. \qed

\subsection{Validity of the empirical bootstrap confidence band (\ref{eq: EB band})}

In this section, we show that under the same assumption as Theorem \ref{thm: validity of MB}, the empirical bootstrap confidence band (\ref{eq: EB band}) is asymptotically valid, i.e.,
\[
\Pr \{ g(x) \in \hat{\mathcal{C}}^{EB}_{1-\tau}(x) \ \forall x \in I \} =1-\tau + o(1), 
\]
and the supremum width of the band contracts at the rate $O_{\Pr}\{ h_{n}^{-\alpha} (nh_{n})^{-1/2} \sqrt{\log (1/h_{n})} \}$. The proof is analogous to that of Theorem \ref{thm: validity of MB} and so we only point out required modifications. 
In view of the proof of Theorem \ref{thm: validity of MB}, it suffices to prove that 
\[
\sup_{z \in \R} \left | \Pr \left \{ \| \hat{\mathsf{Z}}_{n}^{EB} \|_{I} \le z \mid \mathcal{D}_{n} \right \} - \Pr \{ \| \mathsf{Z}_{n}^{G} \|_{I} \le z \} \right | \stackrel{\Pr}{\to} 0.
\]
As in Step 1 of the proof of Theorem \ref{thm: validity of MB}, consider first the infeasible empirical bootstrap process
\[
\mathsf{Z}_{n}^{EB}(x) = \frac{1}{s_{n}(x)\sqrt{n}} \sum_{j=1}^{n} \left [ \{ Y_{j}^{*} - g(x) \} K_{n}((x-W_{j}^{*})/h_{n}) - n^{-1} \sum_{j'=1}^{n} \{ Y_{j'}  - g(x) \} K_{n}((x-W_{j'})/h_{n}) \right ],
\]
which can be rewritten as 
\[
\mathsf{Z}_{n}^{EB}(x) = \frac{1}{s_{n}(x)\sqrt{n}}  \sum_{j=1}^{n} (M_{nj}-1) \{ Y_{j} - g(x) \} K_{n}((x-W_{j})/h_{n}), 
\]
where $M_{nj}$ is the number of times that $(Y_{j},W_{j})$ is ``redrawn'' in the bootstrap sample, and $(M_{n1},\dots,M_{nn})$ is multinomially distributed with parameters $n$ and (probabilities) $1/n,\dots,1/n$: cf. Chapter 3.6 of \cite{vaWe96}. 
Then, instead of  Theorem 2.2 in \cite{ChChKa16}, we can apply Theorem 2.3 in \cite{ChChKa16} to deduce that 
\[
\sup_{z \in \R} \left | \Pr \left \{ \| \mathsf{Z}_{n}^{EB} \|_{I} \le z \mid \mathcal{D}_{n} \right \} - \Pr \{ \| \mathsf{Z}_{n}^{G} \|_{I} \le z \} \right | \stackrel{\Pr}{\to} 0.
\]
The rest is to verify that $\| \hat{\mathsf{Z}}_{n}^{EB} - \mathsf{Z}_{n}^{EB} \|_{I} = o_{\Pr}\{ (\log (1/h_{n}))^{-1/2} \}$, but this is quite analogous to Step 2 of the proof of Theorem \ref{thm: validity of MB} given the expression
\[
\hat{\mathsf{Z}}_{n}^{EB} (x) =  \frac{1}{\hat{s}_{n}(x)\sqrt{n}}  \sum_{j=1}^{n} (M_{nj}-1) \{ Y_{j} - g(x) \} \hat{K}_{n}((x-W_{j})/h_{n}).
\]
(Recall that $\sum_{j=1}^{n} \{ Y_{j} - g(x) \} \hat{K}_{n}((x-W_{j})/h_{n}) = 0$.) Hence, we omit the details. \qed





\section{Additional simulation results}
\label{sec: additional simulation}

The main text of the paper presents only a subset of simulation results.
In this section, we present the remaining simulation results which are not presented in the main text.
Table \ref{tab:simulation_results_additional} in this appendix show simulation results for (A) $g(x) = x$, (B) $g(x)=x^2$, and (C) $g(x)=\cos(x)$.
Each table contains results for each of Model 1 and Model 2, for each of the two sample sizes, $n=200$ and $400$, and for each of the two error variance ratios, $1/4$ and $1/3$. 
Simulated coverage probabilities are reported for each of the two nominal coverage probabilities, $0.900$ and $0.950$.

\begin{table}
	\centering	
		\scalebox{1}{
		\begin{tabular}{ccccccccc}
		\hline\hline
		\multicolumn{3}{l}{(A) Regression: $g(x) = x$} && \multicolumn{2}{c}{Error Variance} && \multicolumn{2}{c}{Error Variance}\\
			& Nominal & Sample && \multicolumn{2}{c}{=1/4 (25\%)} && \multicolumn{2}{c}{=1/3 (33\%)}\\
		\cline{5-6}\cline{8-9}
			Model & Probability & Size ($n$) && Coverage & Length && Coverage & Length\\
		\hline
			1 & 0.900            
			 &   200  && 0.898 & 1.996 && 0.899 & 1.887\\
			&&   400  && 0.932 & 1.503 && 0.927 & 1.339\\
		\cline{2-9}
			1 & 0.950
			 &   200  && 0.932 & 2.215 && 0.939 & 2.092\\
			&&   400  && 0.967 & 1.661 && 0.955 & 1.489\\
		\hline
			2 & 0.900            
			 &   200  && 0.919 & 1.946 && 0.891 & 1.823\\
			&&   400  && 0.934 & 1.364 && 0.900 & 1.290\\
		\cline{2-9}
			2 & 0.950
			 &   200  && 0.951 & 2.156 && 0.929 & 2.021\\
			&&   400  && 0.961 & 1.516 && 0.947 & 1.436\\
		\hline\hline
		\\
		\hline\hline
		\multicolumn{3}{l}{(B) Regression: $g(x) = x^2$} && \multicolumn{2}{c}{Error Variance} && \multicolumn{2}{c}{Error Variance}\\
			& Nominal & Sample && \multicolumn{2}{c}{=1/4 (25\%)} && \multicolumn{2}{c}{=1/3 (33\%)}\\
		\cline{5-6}\cline{8-9}
			Model & Probability & Size ($n$) && Coverage & Length && Coverage & Length\\
		\hline
			1 & 0.900            
			 &   200  && 0.881 & 3.601 && 0.897 & 2.950\\
			&&   400  && 0.918 & 3.127 && 0.919 & 2.540\\
		\cline{2-9}
			1 & 0.950
			 &   200  && 0.919 & 3.976 && 0.936 & 3.277\\
			&&   400  && 0.954 & 3.464 && 0.954 & 2.828\\

		\hline
			2 & 0.900            
			 &   200  && 0.915 & 3.721 && 0.919 & 3.061\\
			&&   400  && 0.922 & 2.827 && 0.904 & 2.289\\
		\cline{2-9}
			2 & 0.950
			 &   200  && 0.947 & 4.134 && 0.946 & 3.406\\
			&&   400  && 0.956 & 3.128 && 0.943 & 2.551\\
		\hline\hline
		\\
		\hline\hline
		\multicolumn{3}{l}{(C) Regression: $g(x) = \cos(x)$} && \multicolumn{2}{c}{Error Variance} && \multicolumn{2}{c}{Error Variance}\\
			& Nominal & Sample && \multicolumn{2}{c}{=1/4 (25\%)} && \multicolumn{2}{c}{=1/3 (33\%)}\\
		\cline{5-6}\cline{8-9}
			Model & Probability & Size ($n$) && Coverage & Length && Coverage & Length\\
		\hline
			1 & 0.900            
			 &   200  && 0.882 & 1.588 && 0.891 & 1.470\\
			&&   400  && 0.907 & 1.242 && 0.886 & 1.088\\
		\cline{2-9}
			1 & 0.950
			 &   200  && 0.926 & 1.768 && 0.936 & 1.631\\
			&&   400  && 0.940 & 1.367 && 0.932 & 1.207\\
		\hline
			2 & 0.900            
			 &   200  && 0.901 & 1.587 && 0.888 & 1.451\\
			&&   400  && 0.897 & 1.169 && 0.887 & 1.020\\
		\cline{2-9}
			2 & 0.950
			 &   200  && 0.945 & 1.765 && 0.944 & 1.612\\
			&&   400  && 0.933 & 1.299 && 0.933 & 1.137\\
		\hline\hline
		\end{tabular}
		}
\medskip
\caption{{\small Simulated uniform coverage probabilities of (A) $g(x) = x$, (B) $g(x)=x^2$, and (C) $g(x)=\cos(x)$ by estimated confidence bands in $I=[-\sigma_X,\sigma_X]$ under normally distributed $X$ and Laplace distributed $\U$. Also reported are the medians of the average band lengths on $I$. The simulated probabilities and lengths are computed for each of the two nominal coverage probabilities, 90\% and 95\%, based on 1,000 Monte Carlo iterations.}}
	\label{tab:simulation_results_additional}
\end{table}

Figure \ref{fig:sim_model2_x3_sinx} in this appendix illustrates realizations of estimates and confidence bands for the functions $g(x)=x^3$ and $g(x)=\sin(x)$ in Model 2 for error variance ratios of $1/4$ and $1/3$.
Figure \ref{fig:sim_model1_x1_x2} in this  appendix illustrates realizations of estimates and confidence bands for the functions $g(x)=x$ and $g(x)=x^2$ in Model 1 for error variance ratios of $1/4$ and $1/3$.
Similarly, Figure \ref{fig:sim_model2_x1_x2} in this  appendix illustrates realizations of estimates and confidence bands for the functions $g(x)=x$ and $g(x)=x^2$ in Model 2 for error variance ratios of $1/4$ and $1/3$.
Finally, Figure \ref{fig:sim_model1_model2_cosx} in this  appendix illustrates realizations of estimates and confidence bands for the function $g(x)=\cos(x)$ in Model 1 and Model 2 for error variance ratios of $1/4$ and $1/3$.

\begin{figure}
	\centering
		\begin{tabular}{ccc}
		&
		$n=200$&
		$n=400$\\
		\begin{minipage}{0.2\textwidth}$g(x)=x^3$\bigskip\\Model 2\bigskip\\EV=1/4 (25\%)\end{minipage} &
		\begin{minipage}{0.3\textwidth}\includegraphics[width=1\textwidth]{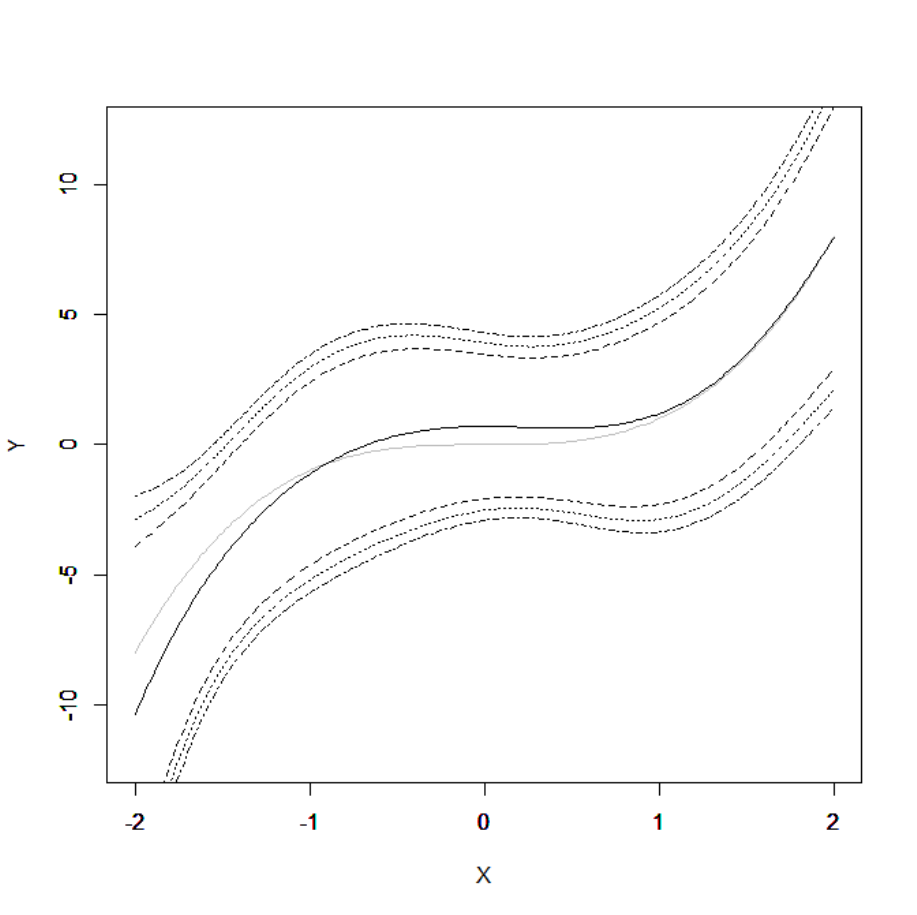}\end{minipage} &
		\begin{minipage}{0.3\textwidth}\includegraphics[width=1\textwidth]{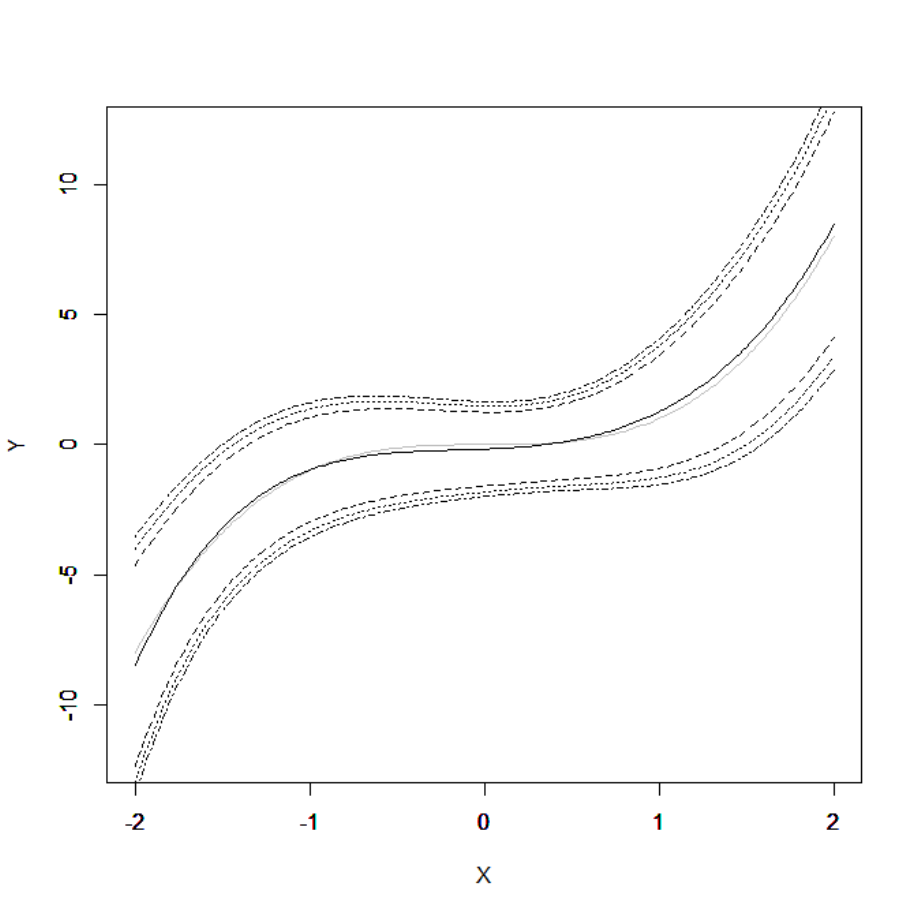}\end{minipage} \\
		\begin{minipage}{0.2\textwidth}$g(x)=x^3$\bigskip\\Model 2\bigskip\\EV=1/3 (33\%)\end{minipage} &
		\begin{minipage}{0.3\textwidth}\includegraphics[width=1\textwidth]{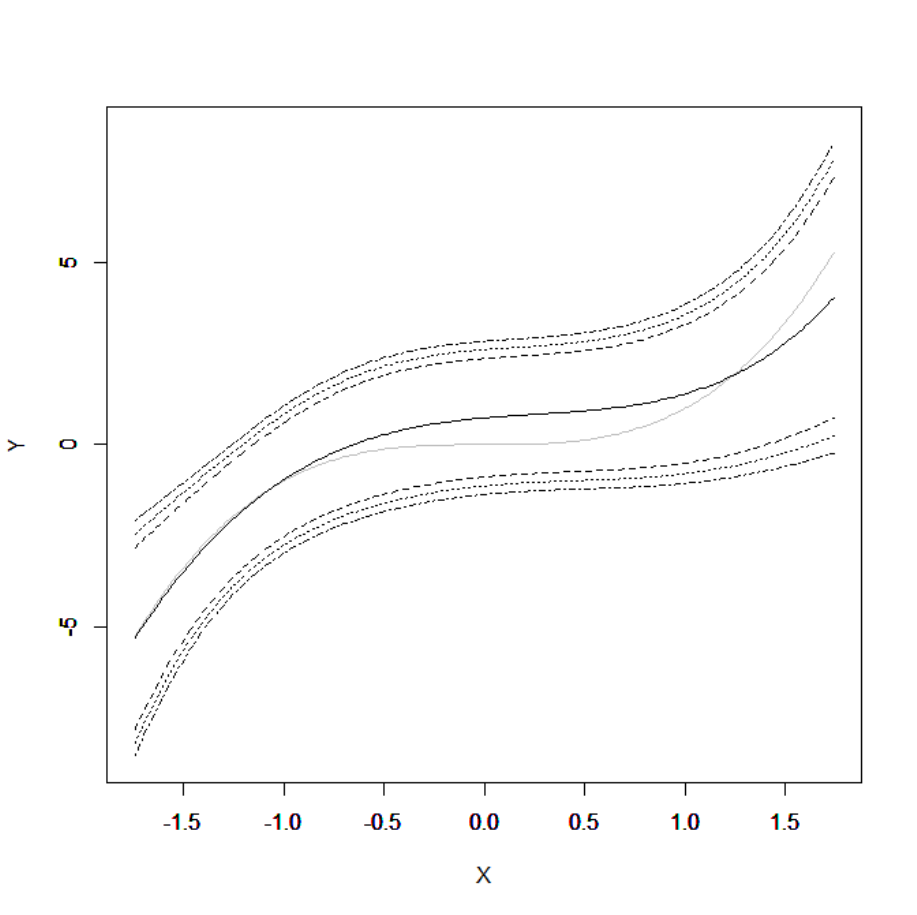}\end{minipage} &
		\begin{minipage}{0.3\textwidth}\includegraphics[width=1\textwidth]{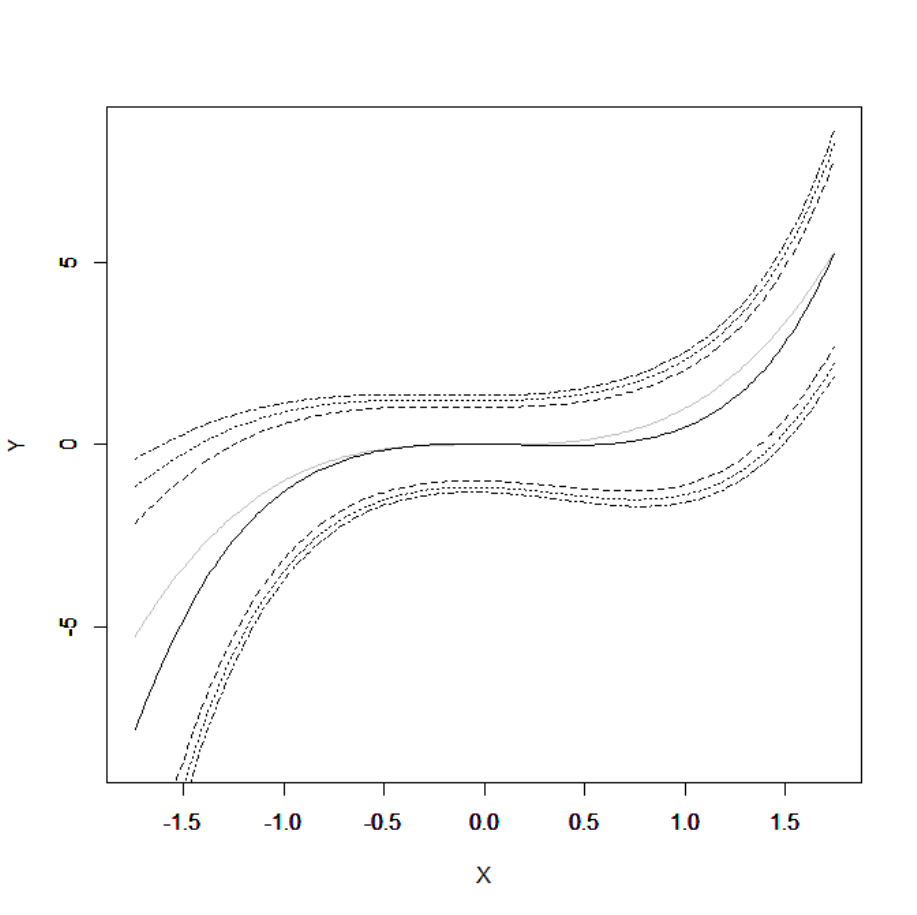}\end{minipage} \\
		\begin{minipage}{0.2\textwidth}$g(x)=\sin(x)$\bigskip\\Model 2\bigskip\\EV=1/4 (25\%)\end{minipage} &
		\begin{minipage}{0.3\textwidth}\includegraphics[width=1\textwidth]{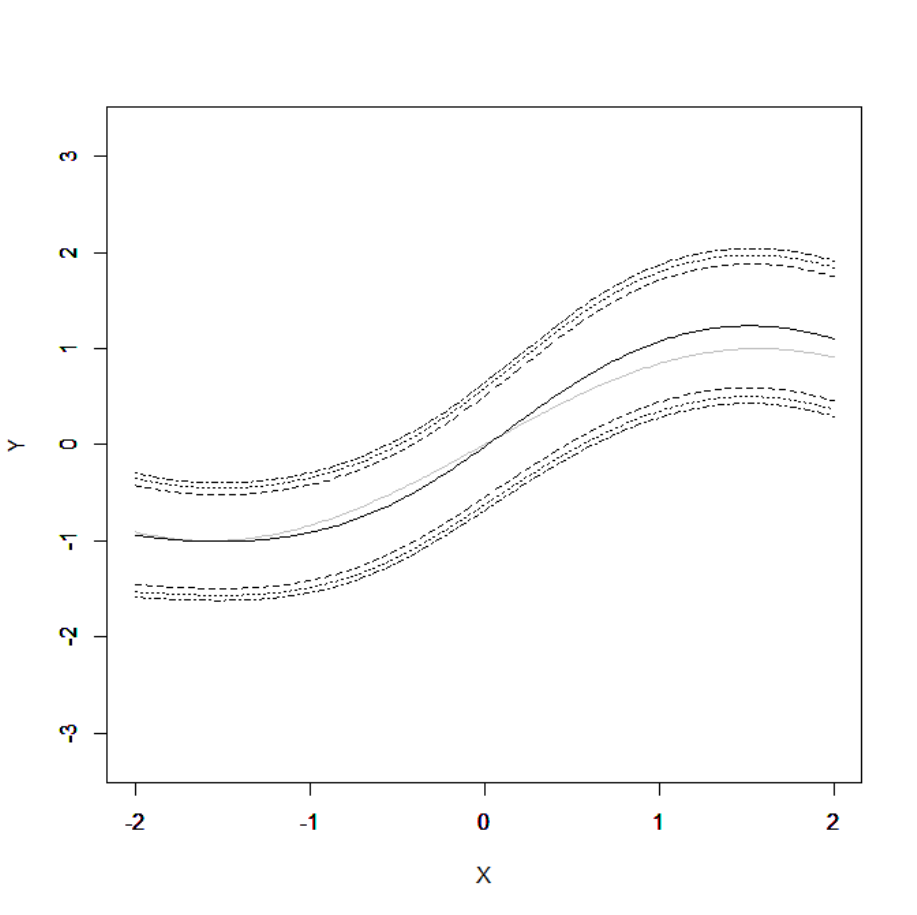}\end{minipage} &
		\begin{minipage}{0.3\textwidth}\includegraphics[width=1\textwidth]{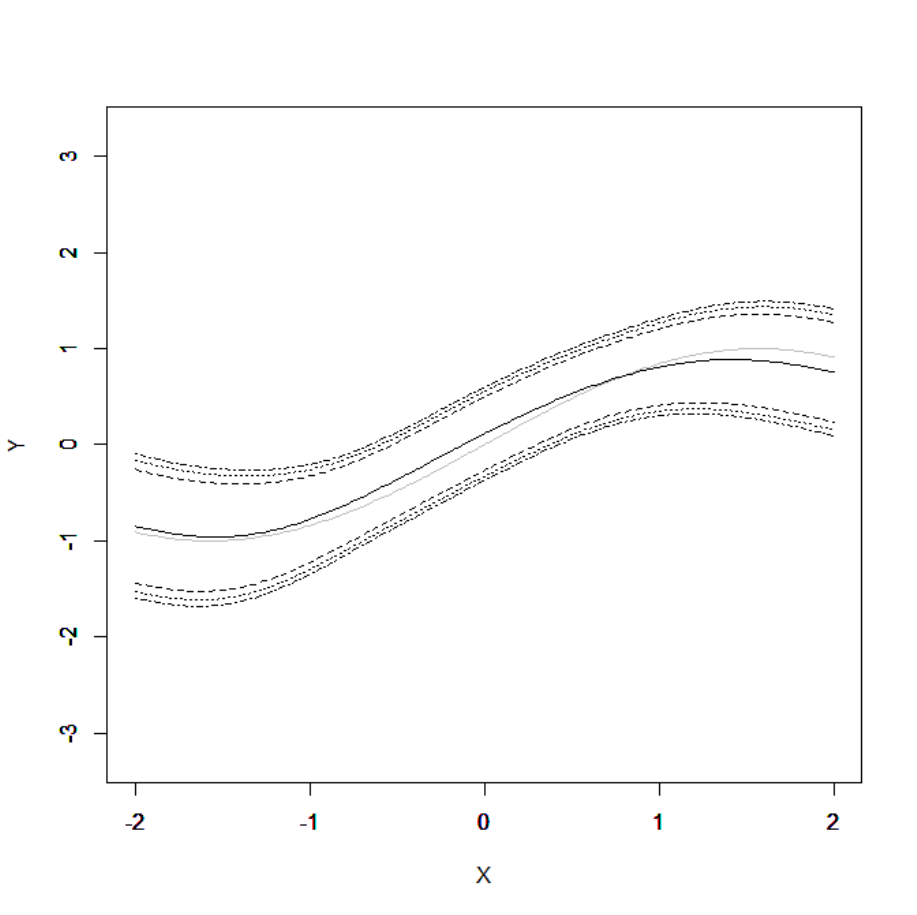}\end{minipage} \\
		\begin{minipage}{0.2\textwidth}$g(x)=\sin(x)$\bigskip\\Model 2\bigskip\\EV=1/3 (33\%)\end{minipage} &
		\begin{minipage}{0.3\textwidth}\includegraphics[width=1\textwidth]{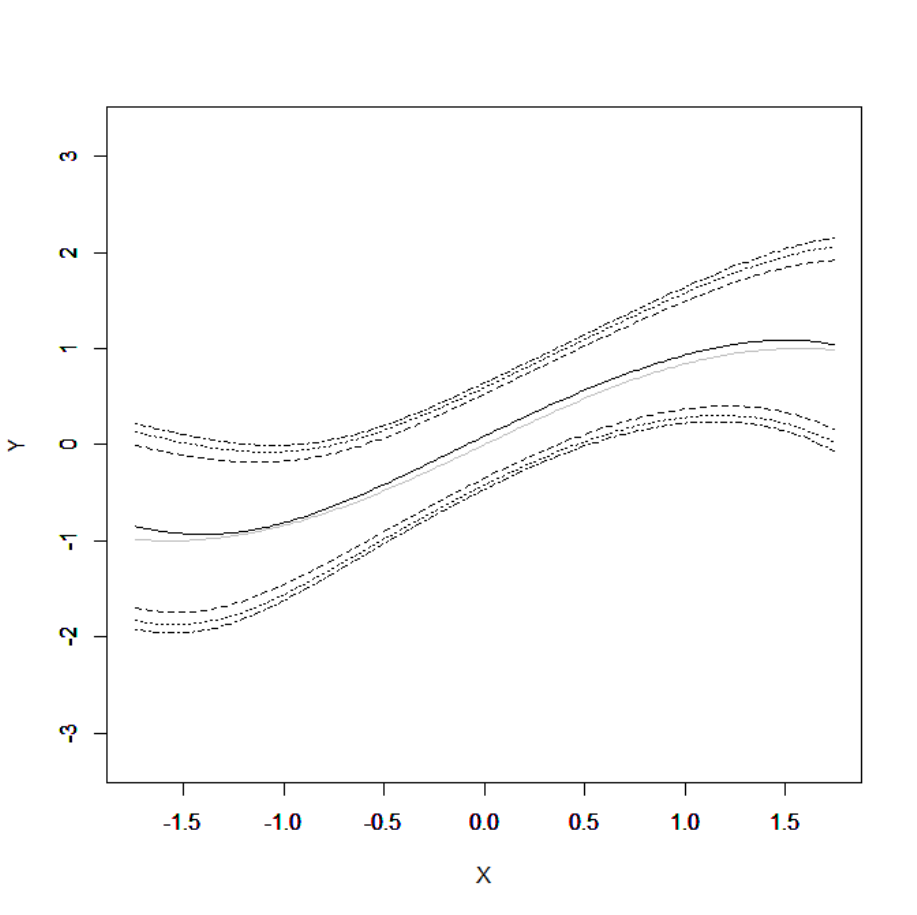}\end{minipage} &
		\begin{minipage}{0.3\textwidth}\includegraphics[width=1\textwidth]{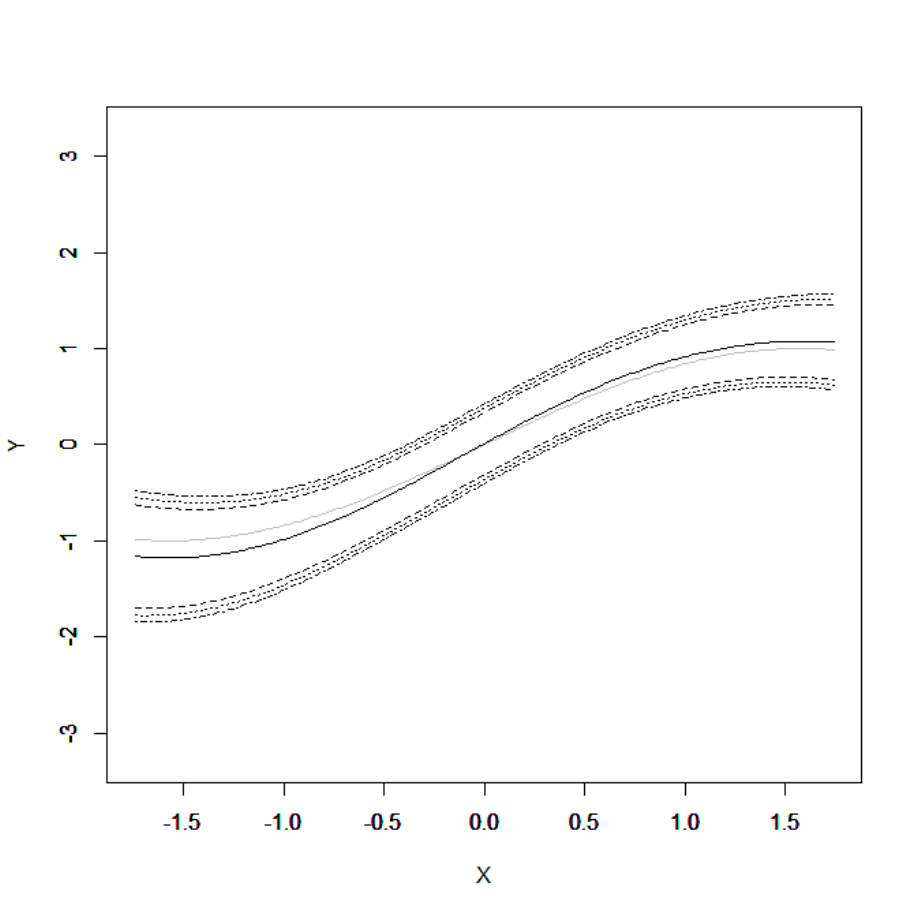}\end{minipage}
		\end{tabular}
	\caption{{\small Confidence bands for $g(x)=\sin(x)$ and $g(x)=x^3$ in Model 2 for error variance ratios of $1/4$ and $1/3$. Gray curves indicate the true function, black solid curves indicate estimates, and dashed curves indicate the 80\%, 90\%, and 95\% confidence bands.}}
	\label{fig:sim_model2_x3_sinx}
\end{figure}

\begin{figure}
	\centering
		\begin{tabular}{ccc}
		&
		$n=200$&
		$n=400$\\
		\begin{minipage}{0.2\textwidth}$g(x)=x$\bigskip\\Model 1\bigskip\\EV=1/4 (25\%)\end{minipage} &
		\begin{minipage}{0.3\textwidth}\includegraphics[width=1\textwidth]{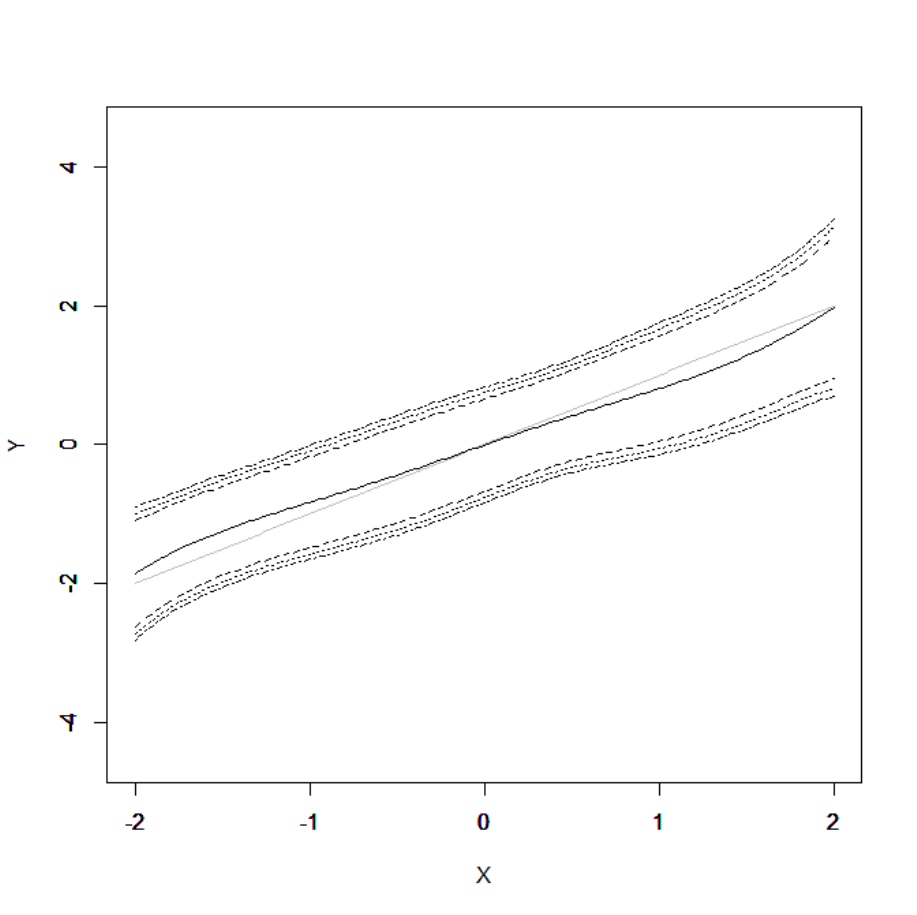}\end{minipage} &
		\begin{minipage}{0.3\textwidth}\includegraphics[width=1\textwidth]{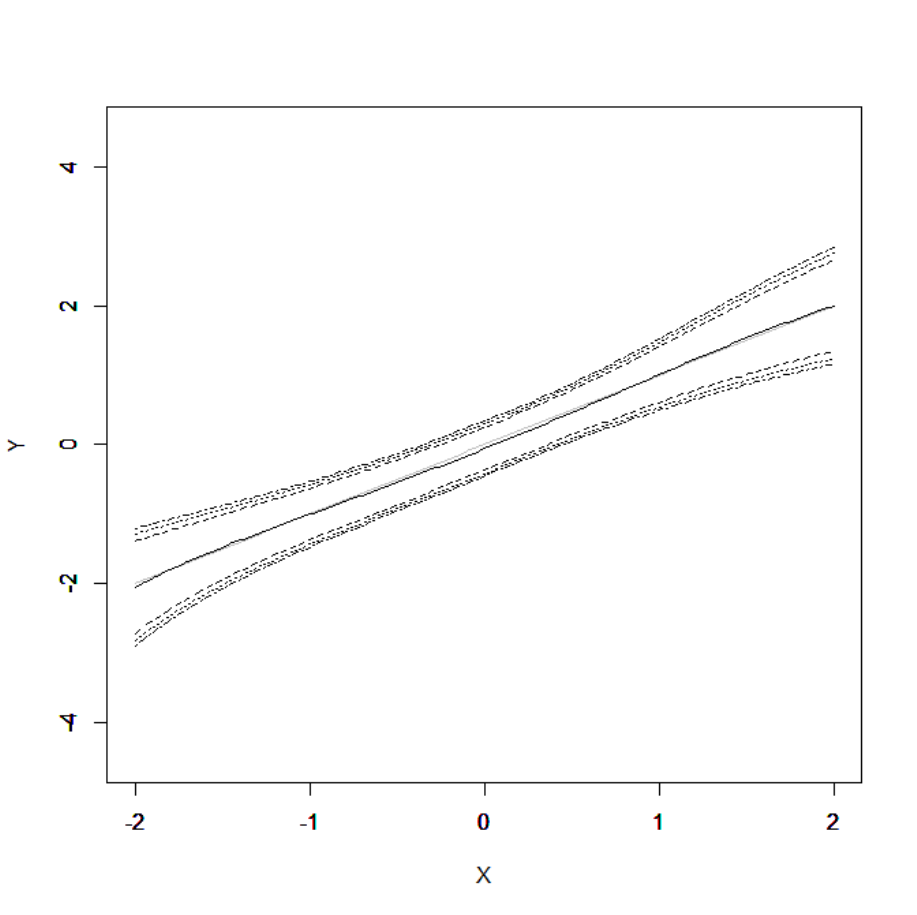}\end{minipage} \\
		\begin{minipage}{0.2\textwidth}$g(x)=x$\bigskip\\Model 1\bigskip\\EV=1/3 (33\%)\end{minipage} &
		\begin{minipage}{0.3\textwidth}\includegraphics[width=1\textwidth]{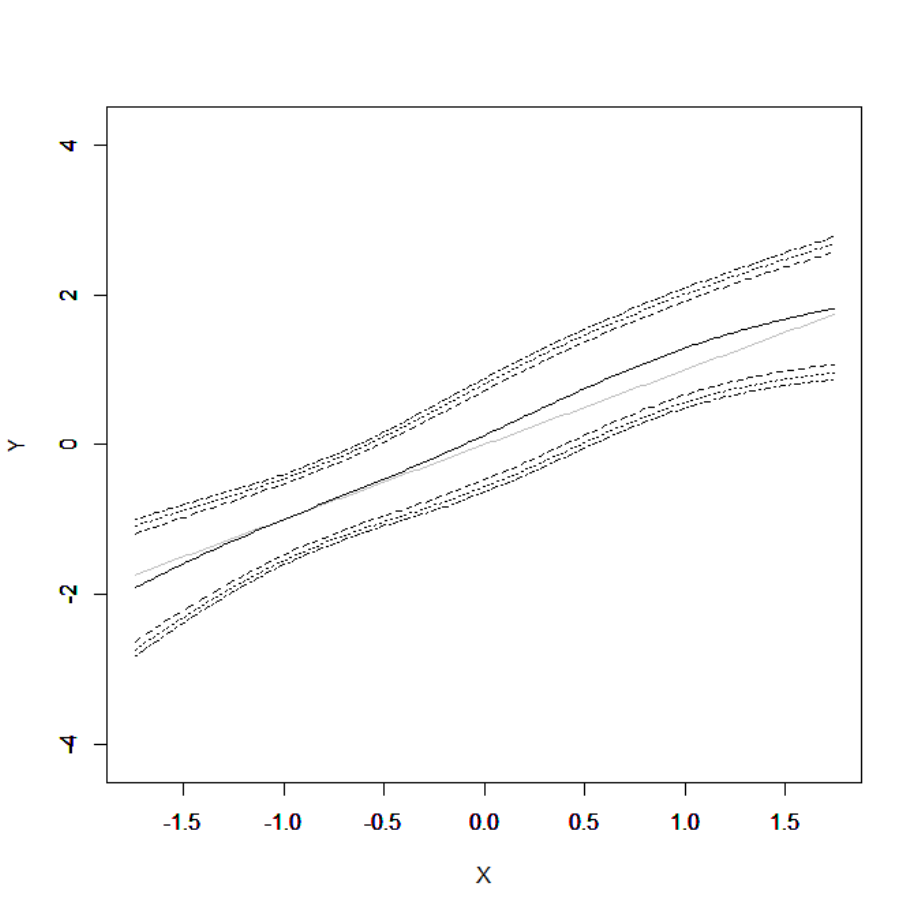}\end{minipage} &
		\begin{minipage}{0.3\textwidth}\includegraphics[width=1\textwidth]{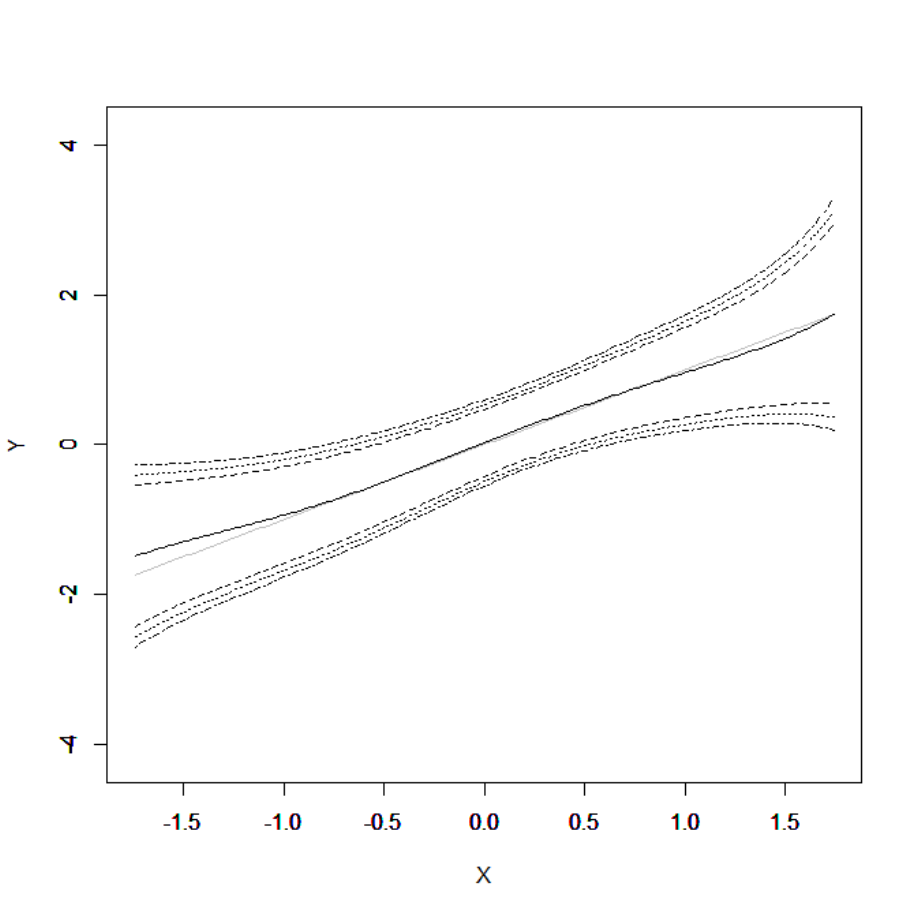}\end{minipage} \\
		\begin{minipage}{0.2\textwidth}$g(x)=x^2$\bigskip\\Model 1\bigskip\\EV=1/4 (25\%)\end{minipage} &
		\begin{minipage}{0.3\textwidth}\includegraphics[width=1\textwidth]{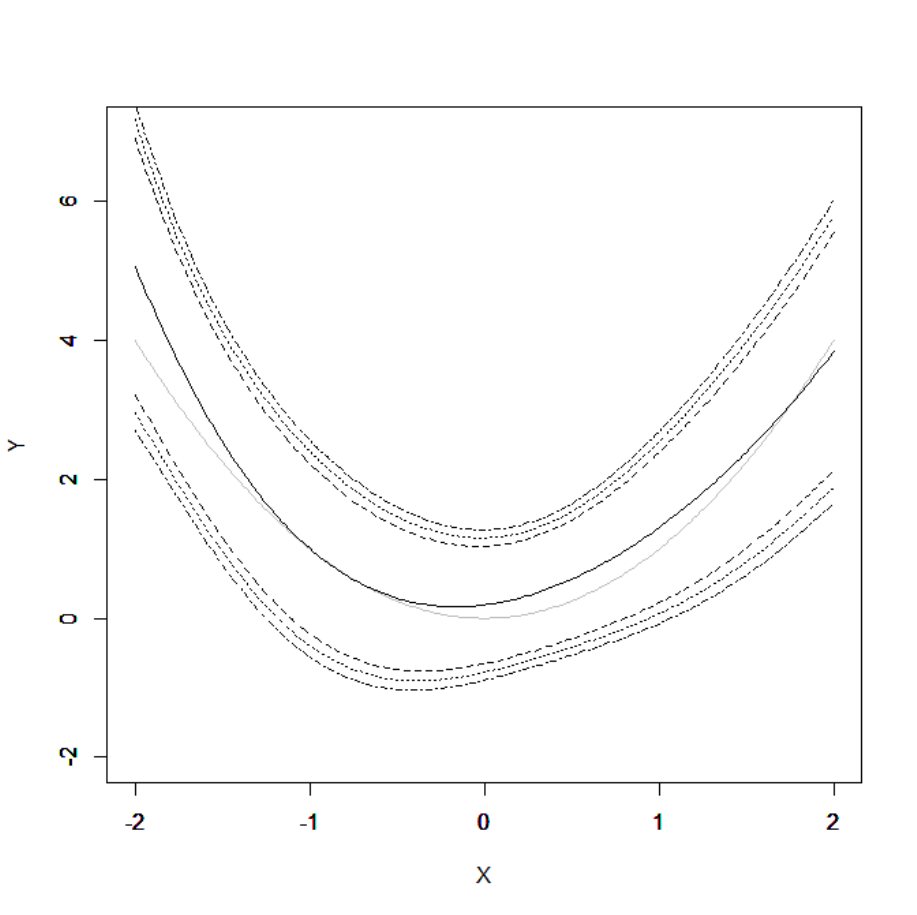}\end{minipage} &
		\begin{minipage}{0.3\textwidth}\includegraphics[width=1\textwidth]{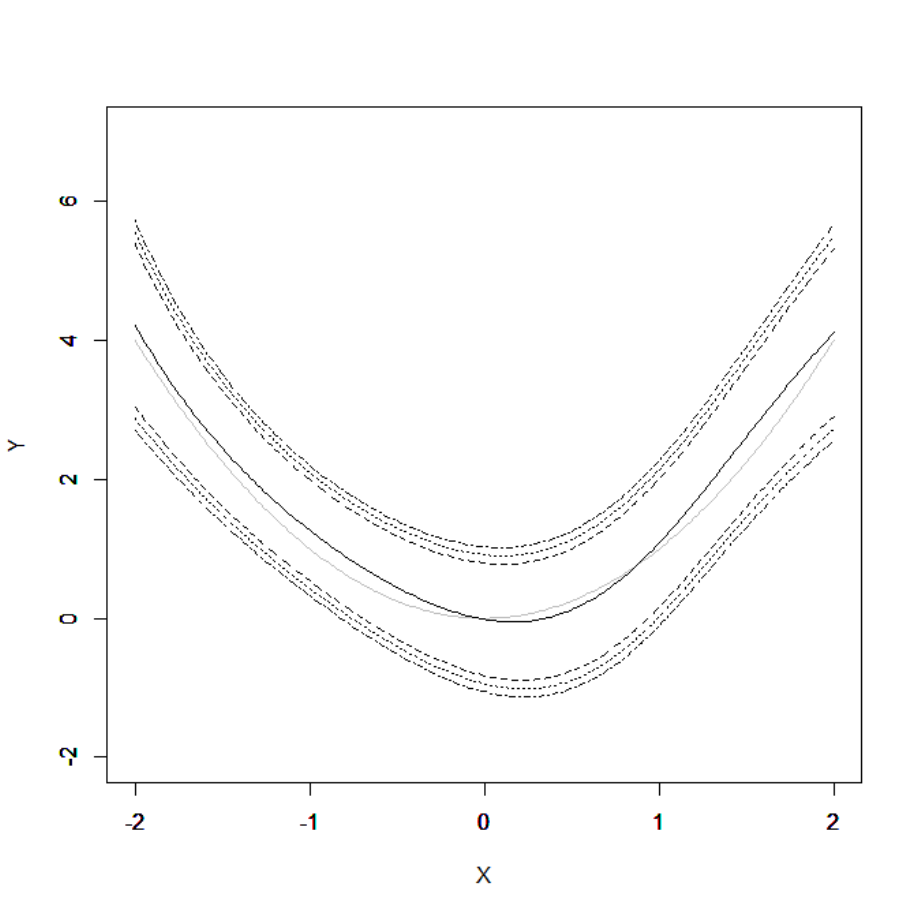}\end{minipage} \\
		\begin{minipage}{0.2\textwidth}$g(x)=x^2$\bigskip\\Model 1\bigskip\\EV=1/3 (33\%)\end{minipage} &
		\begin{minipage}{0.3\textwidth}\includegraphics[width=1\textwidth]{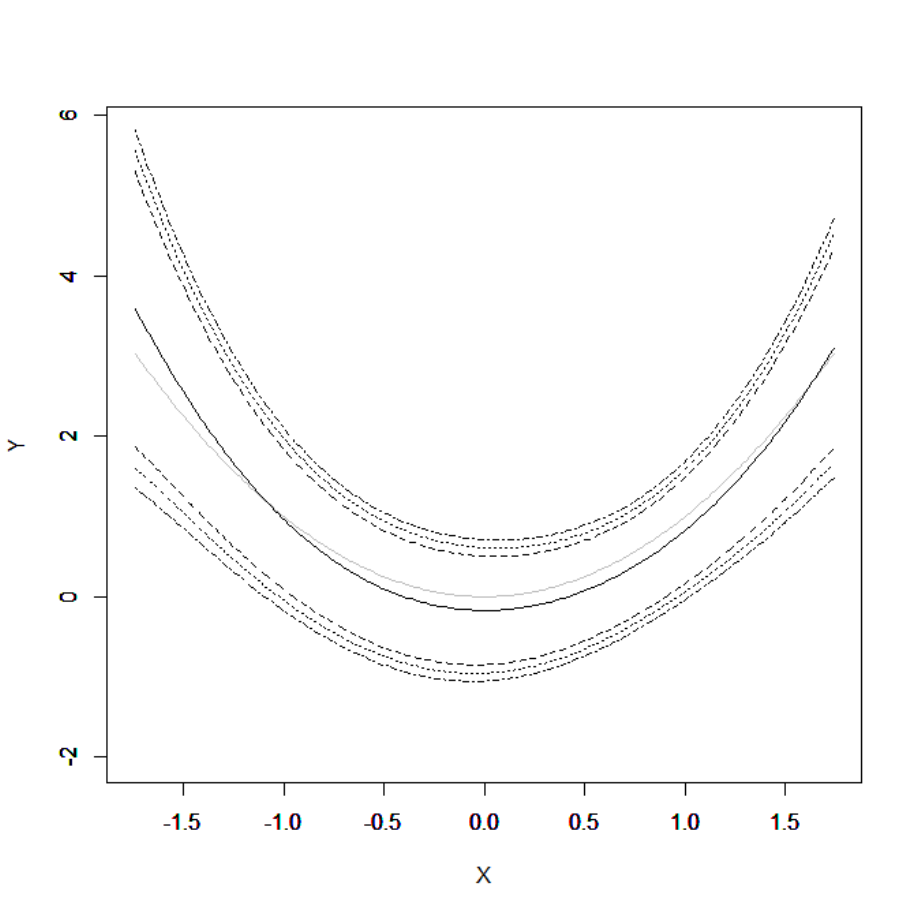}\end{minipage} &
		\begin{minipage}{0.3\textwidth}\includegraphics[width=1\textwidth]{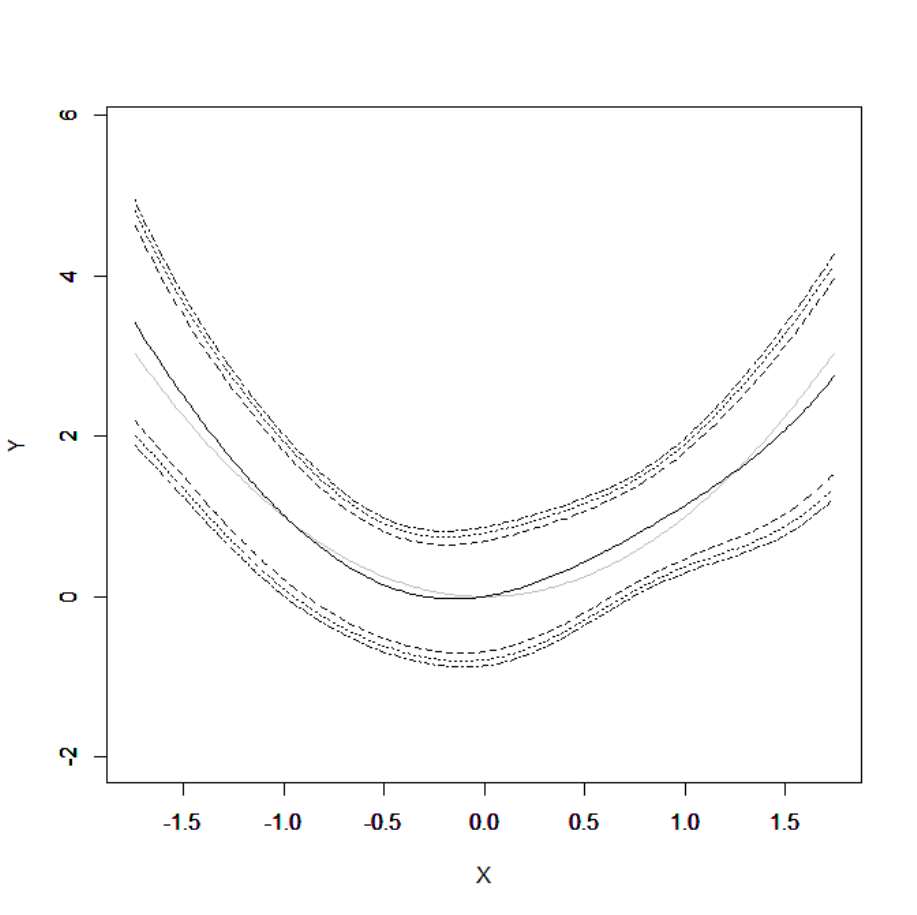}\end{minipage}
		\end{tabular}
	\caption{{\small Confidence bands for $g(x)=x$ and $g(x)=x^2$ in Model 1 for error variance ratios of $1/4$ and $1/3$. Gray curves indicate the true function, black solid curves indicate estimates, and dashed curves indicate the 80\%, 90\%, and 95\% confidence bands.}}
	\label{fig:sim_model1_x1_x2}
\end{figure}

\begin{figure}
	\centering
		\begin{tabular}{ccc}
		&
		$n=200$&
		$n=400$\\
		\begin{minipage}{0.2\textwidth}$g(x)=x$\bigskip\\Model 2\bigskip\\EV=1/4 (25\%)\end{minipage} &
		\begin{minipage}{0.3\textwidth}\includegraphics[width=1\textwidth]{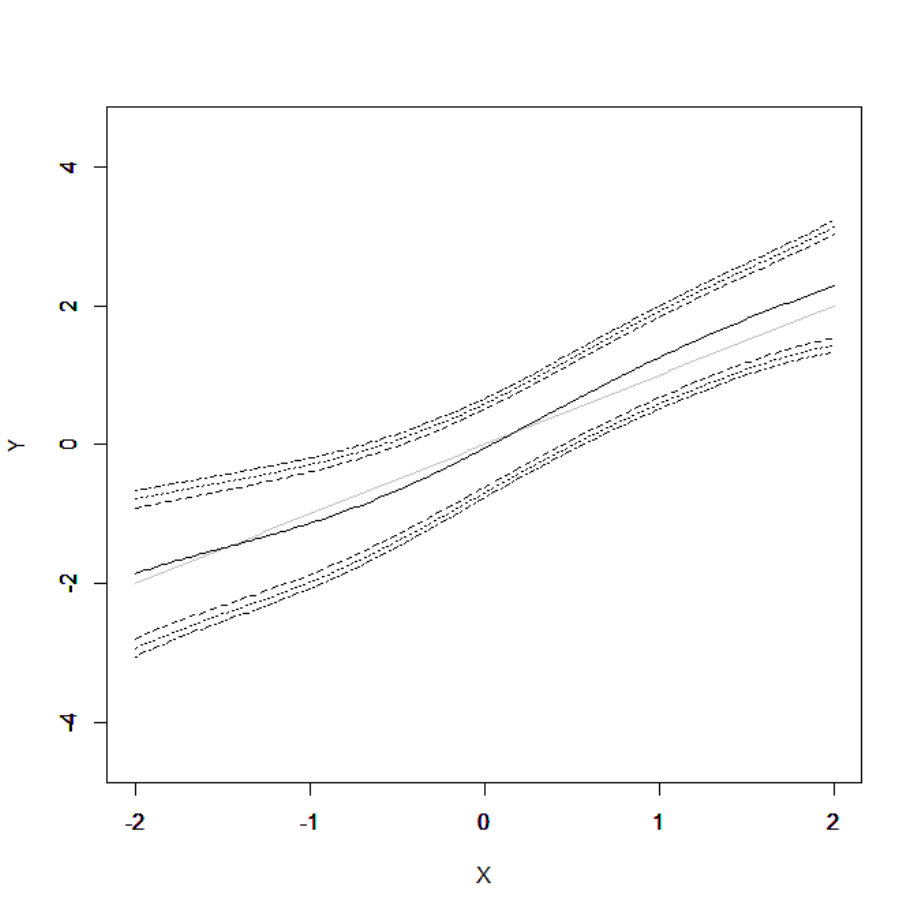}\end{minipage} &
		\begin{minipage}{0.3\textwidth}\includegraphics[width=1\textwidth]{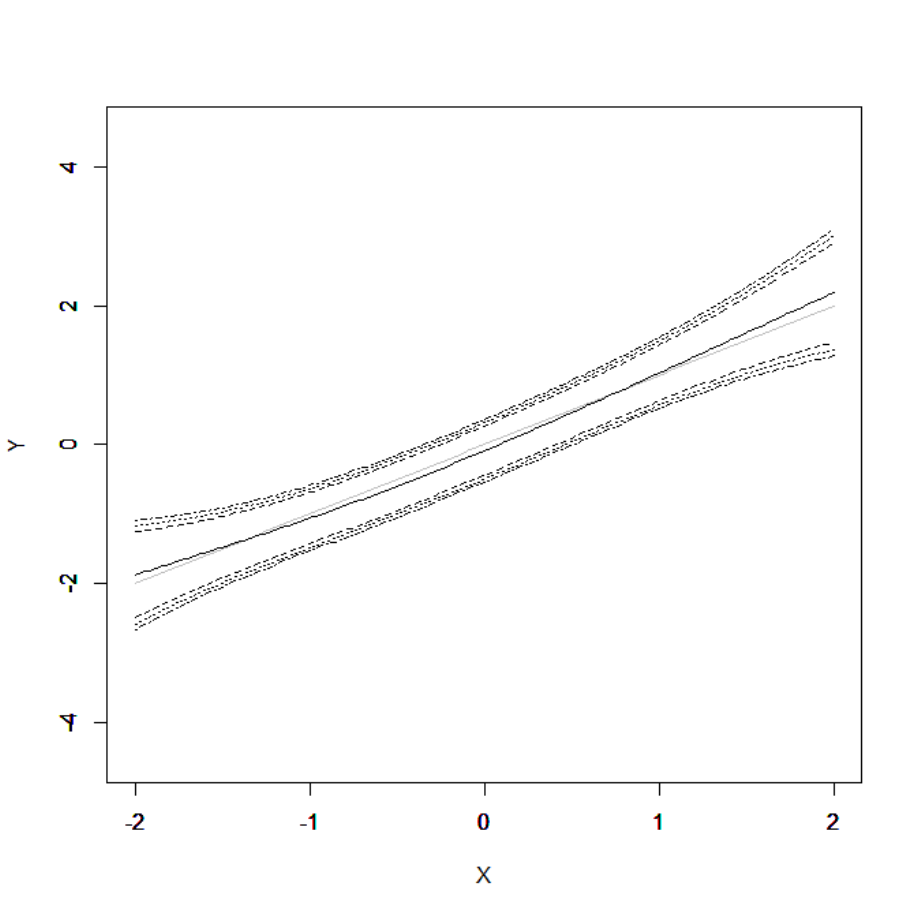}\end{minipage} \\
		\begin{minipage}{0.2\textwidth}$g(x)=x$\bigskip\\Model 2\bigskip\\EV=1/3 (33\%)\end{minipage} &
		\begin{minipage}{0.3\textwidth}\includegraphics[width=1\textwidth]{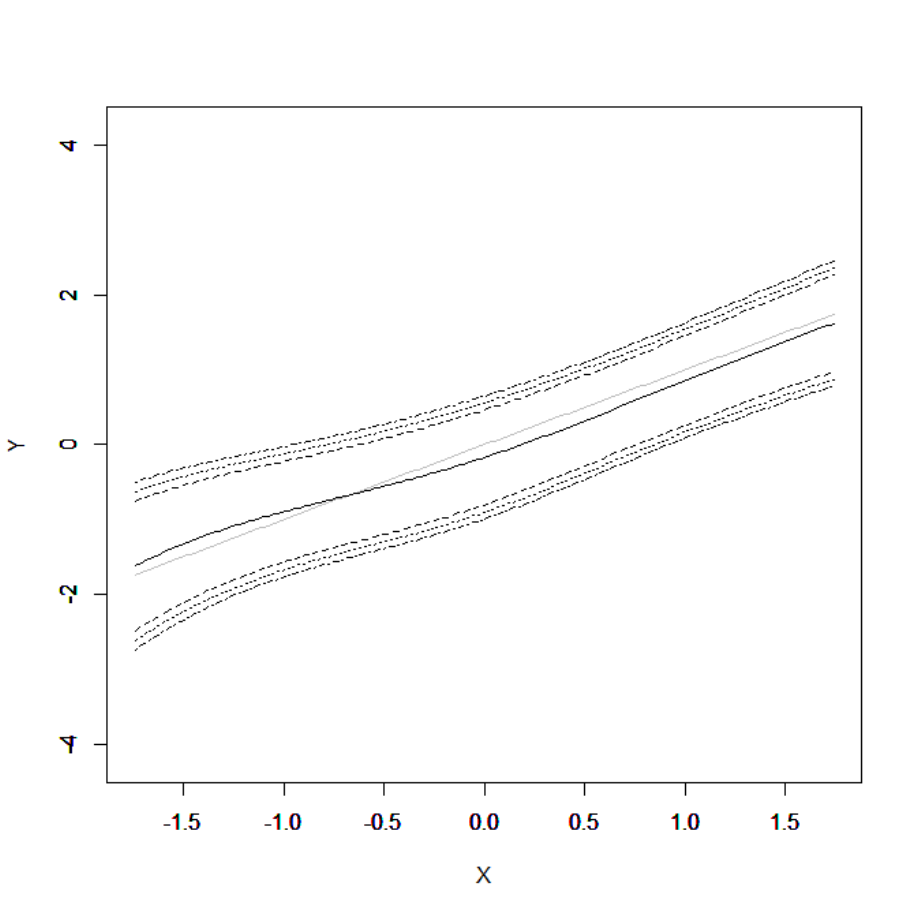}\end{minipage} &
		\begin{minipage}{0.3\textwidth}\includegraphics[width=1\textwidth]{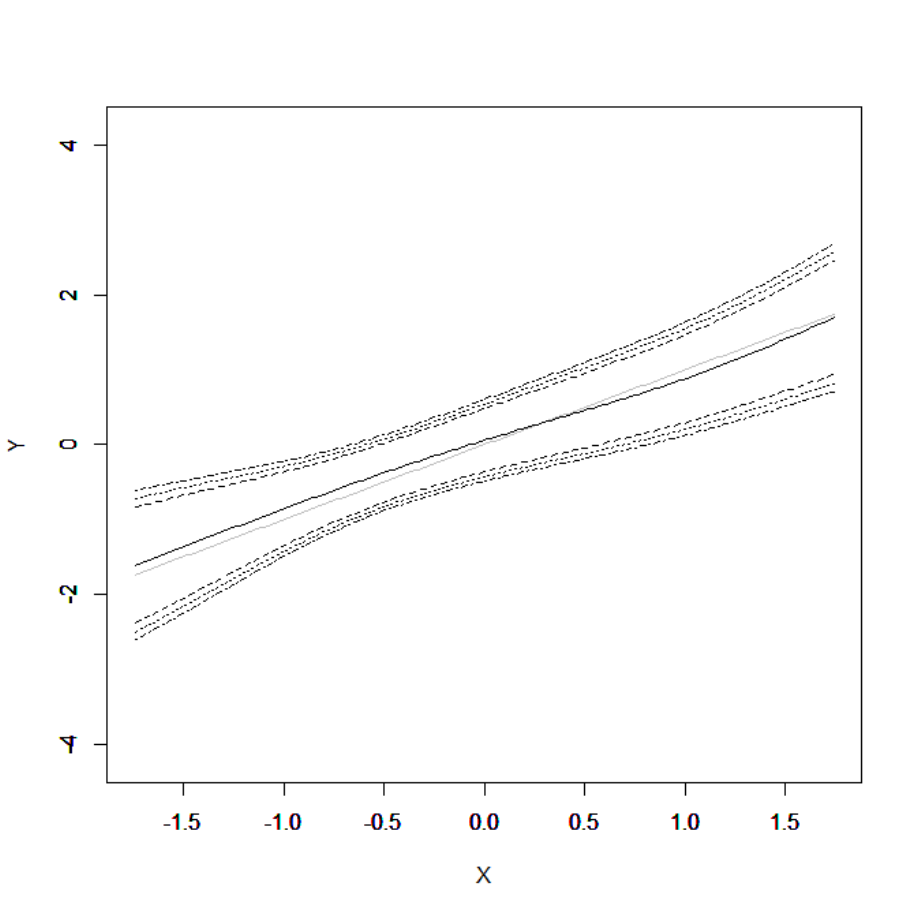}\end{minipage} \\
		\begin{minipage}{0.2\textwidth}$g(x)=x^2$\bigskip\\Model 2\bigskip\\EV=1/4 (25\%)\end{minipage} &
		\begin{minipage}{0.3\textwidth}\includegraphics[width=1\textwidth]{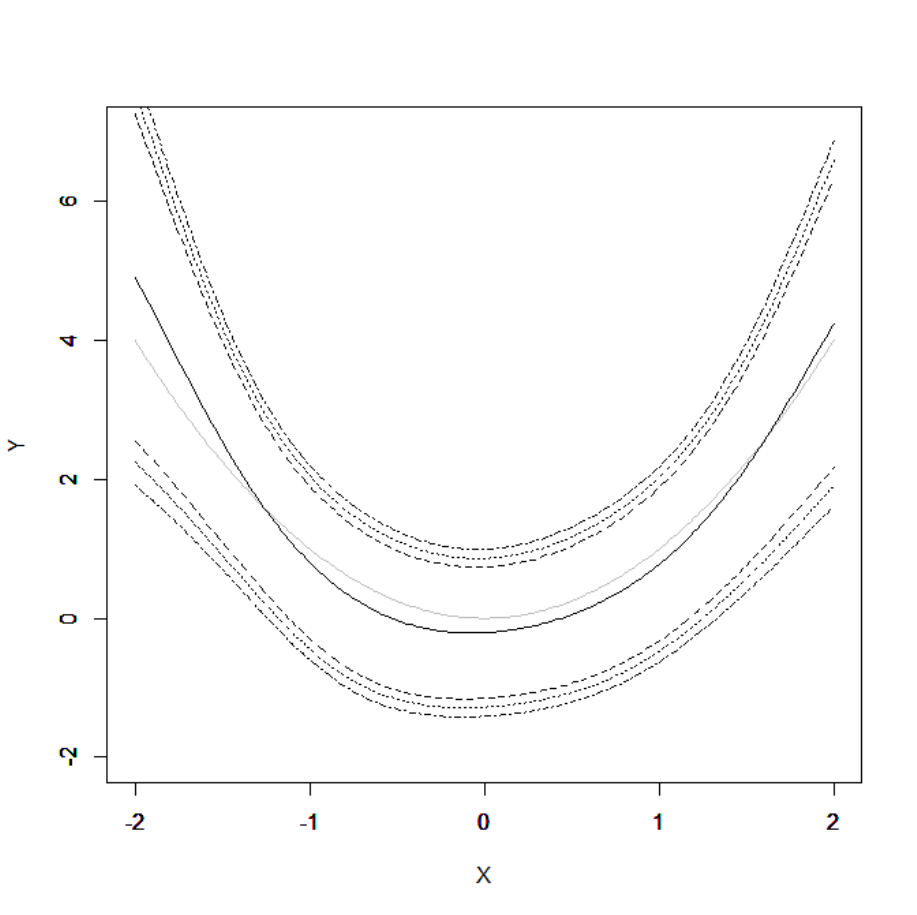}\end{minipage} &
		\begin{minipage}{0.3\textwidth}\includegraphics[width=1\textwidth]{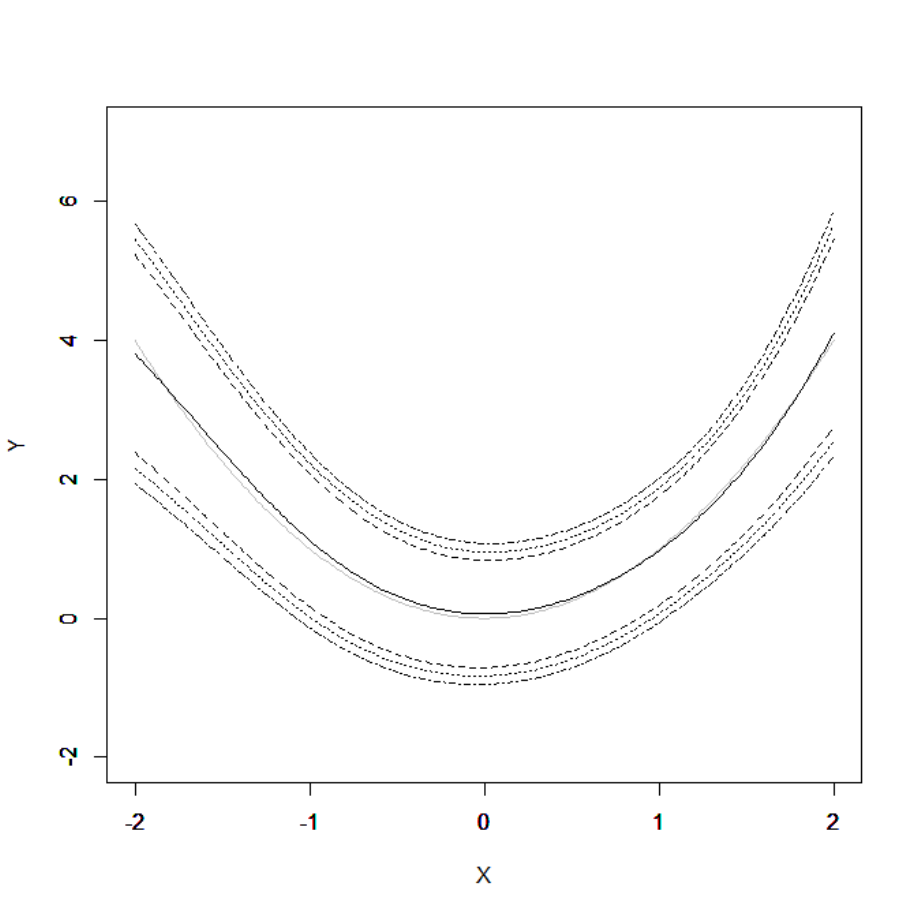}\end{minipage} \\
		\begin{minipage}{0.2\textwidth}$g(x)=x^2$\bigskip\\Model 2\bigskip\\EV=1/3 (33\%)\end{minipage} &
		\begin{minipage}{0.3\textwidth}\includegraphics[width=1\textwidth]{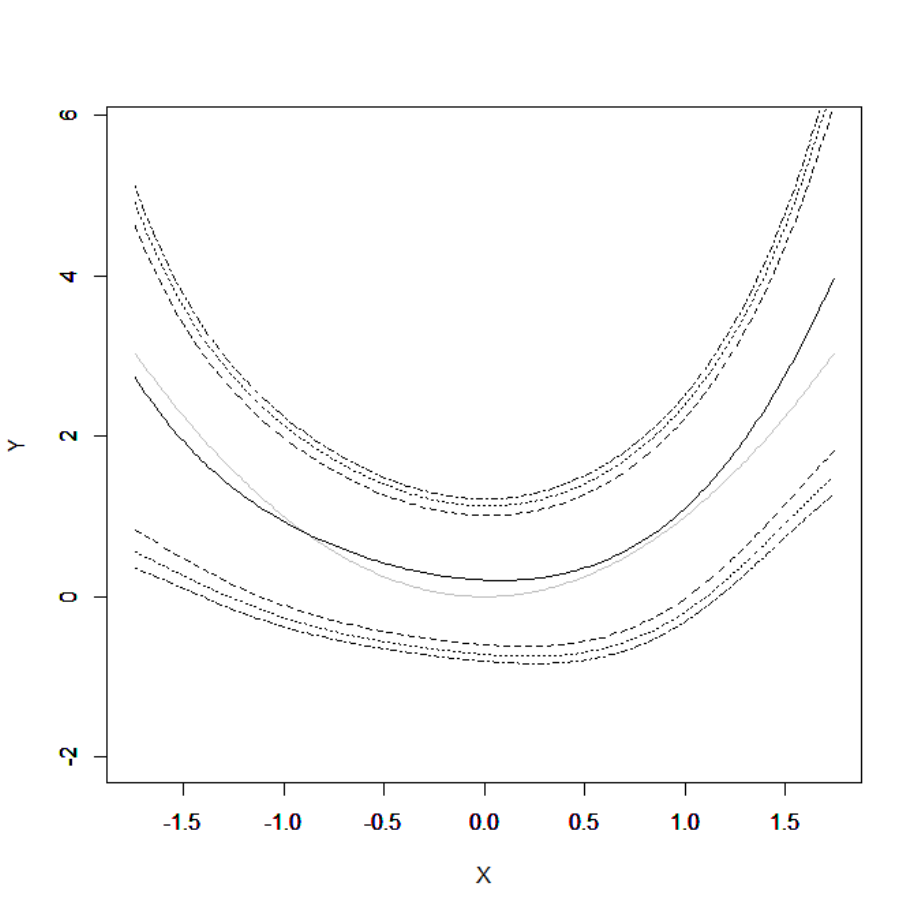}\end{minipage} &
		\begin{minipage}{0.3\textwidth}\includegraphics[width=1\textwidth]{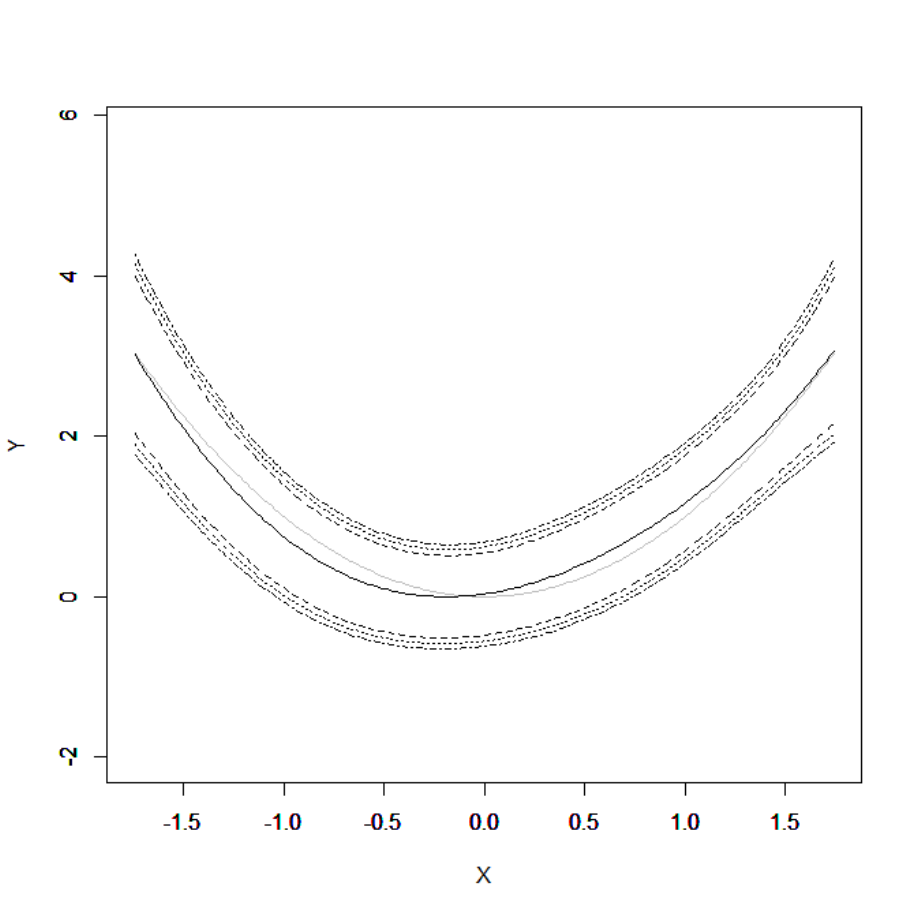}\end{minipage}
		\end{tabular}
	\caption{{\small Confidence bands for $g(x)=x$ and $g(x)=x^2$ in Model 2 for error variance ratios of $1/4$ and $1/3$. Gray curves indicate the true function, black solid curves indicate estimates, and dashed curves indicate the 80\%, 90\%, and 95\% confidence bands.}}
	\label{fig:sim_model2_x1_x2}
\end{figure}

\begin{figure}
	\centering
		\begin{tabular}{ccc}
		&
		$n=200$&
		$n=400$\\
		\begin{minipage}{0.2\textwidth}$g(x)=\cos(x)$\bigskip\\Model 1\bigskip\\EV=1/4 (25\%)\end{minipage} &
		\begin{minipage}{0.3\textwidth}\includegraphics[width=1\textwidth]{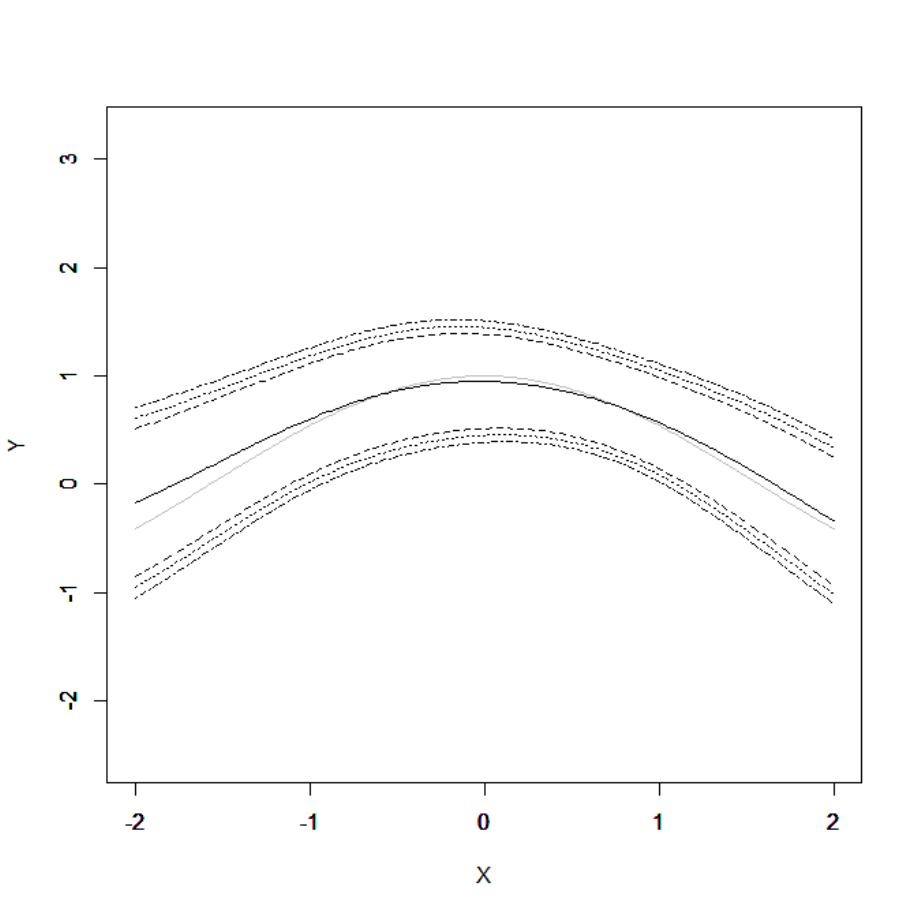}\end{minipage} &
		\begin{minipage}{0.3\textwidth}\includegraphics[width=1\textwidth]{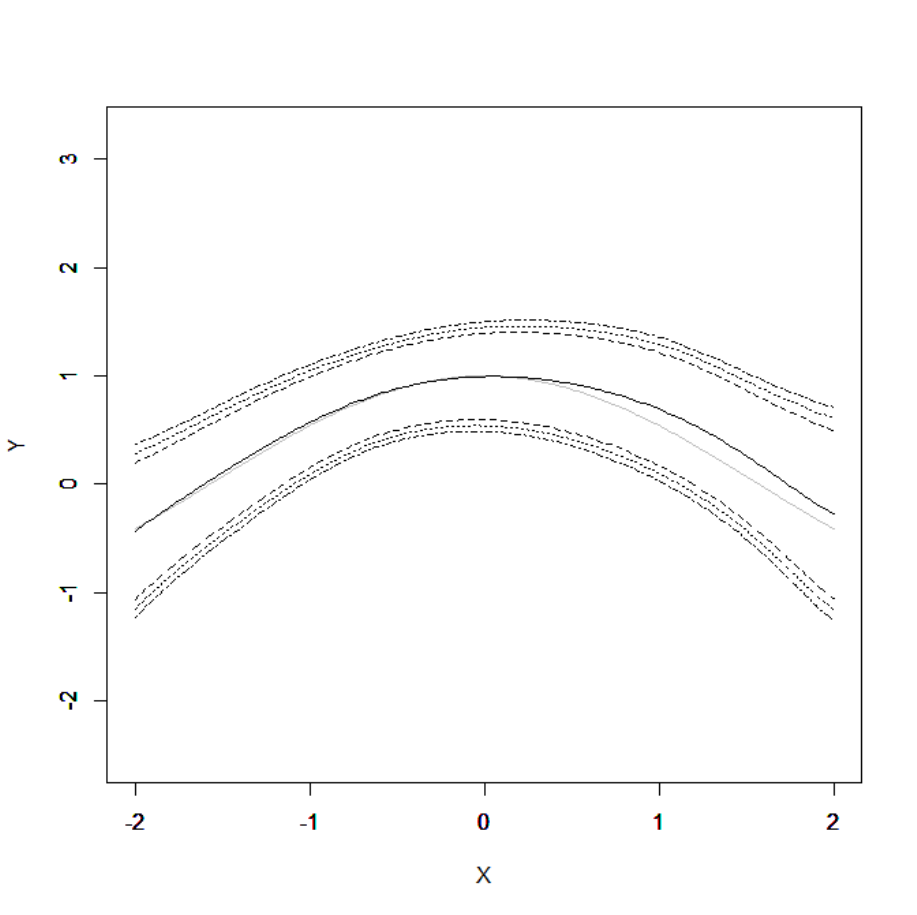}\end{minipage} \\
		\begin{minipage}{0.2\textwidth}$g(x)=\cos(x)$\bigskip\\Model 1\bigskip\\EV=1/3 (33\%)\end{minipage} &
		\begin{minipage}{0.3\textwidth}\includegraphics[width=1\textwidth]{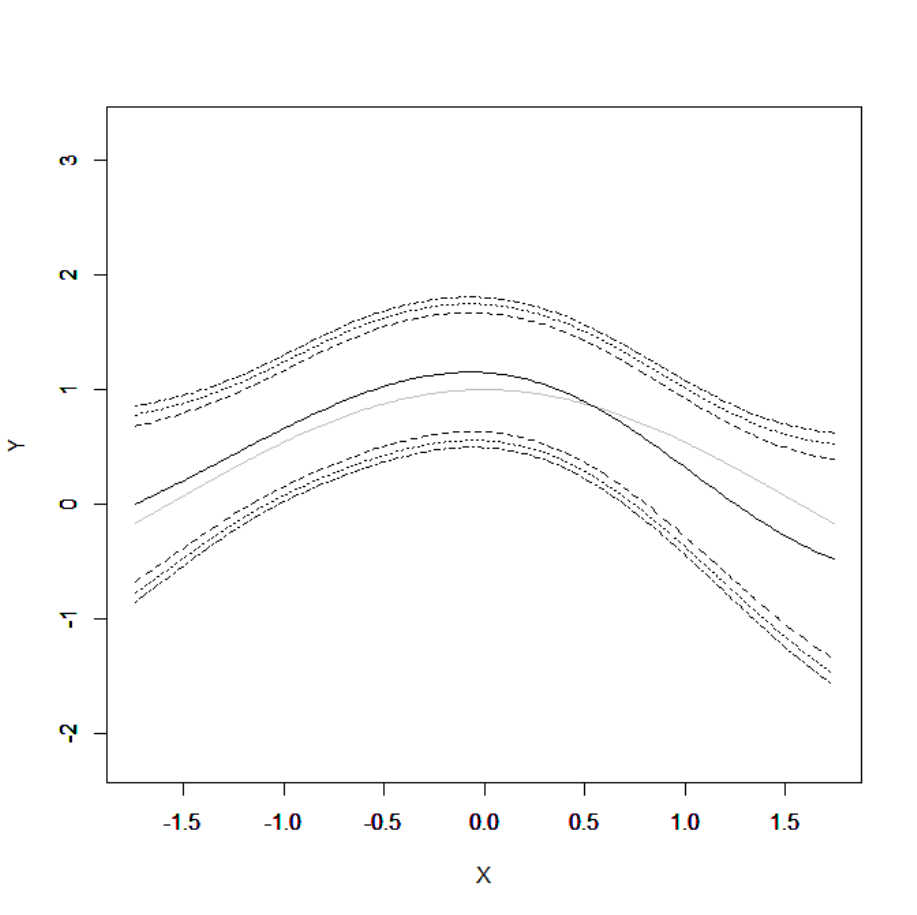}\end{minipage} &
		\begin{minipage}{0.3\textwidth}\includegraphics[width=1\textwidth]{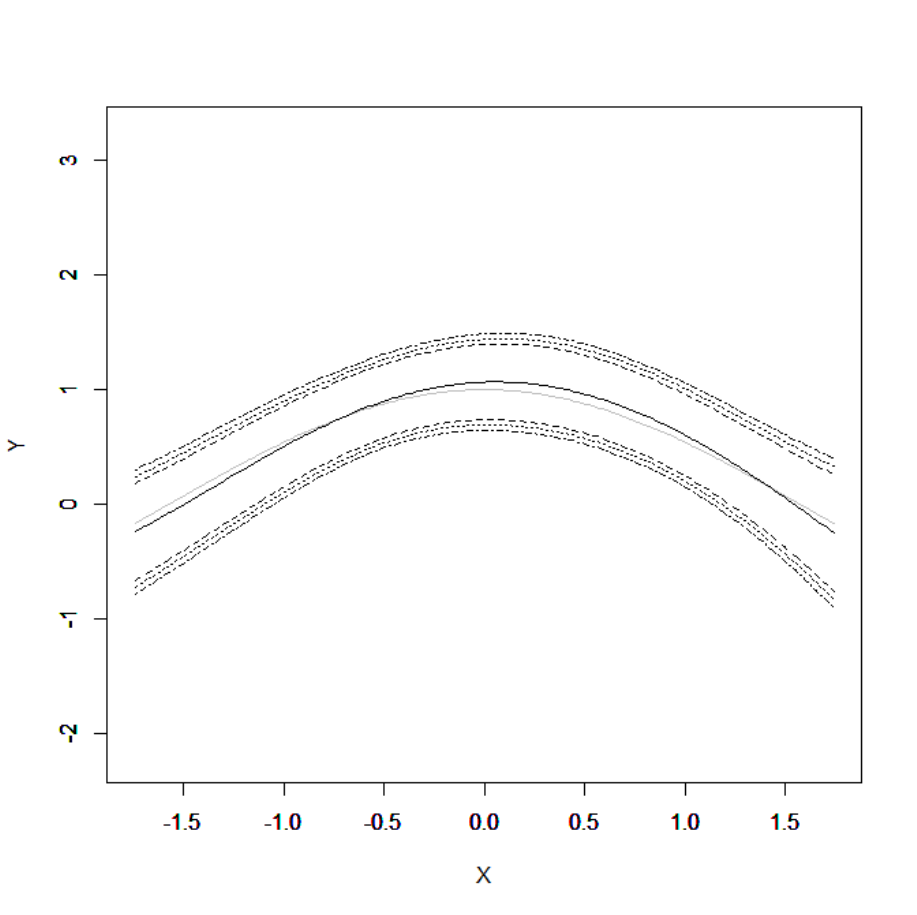}\end{minipage} \\
		\begin{minipage}{0.2\textwidth}$g(x)=\cos(x)$\bigskip\\Model 2\bigskip\\EV=1/4 (25\%)\end{minipage} &
		\begin{minipage}{0.3\textwidth}\includegraphics[width=1\textwidth]{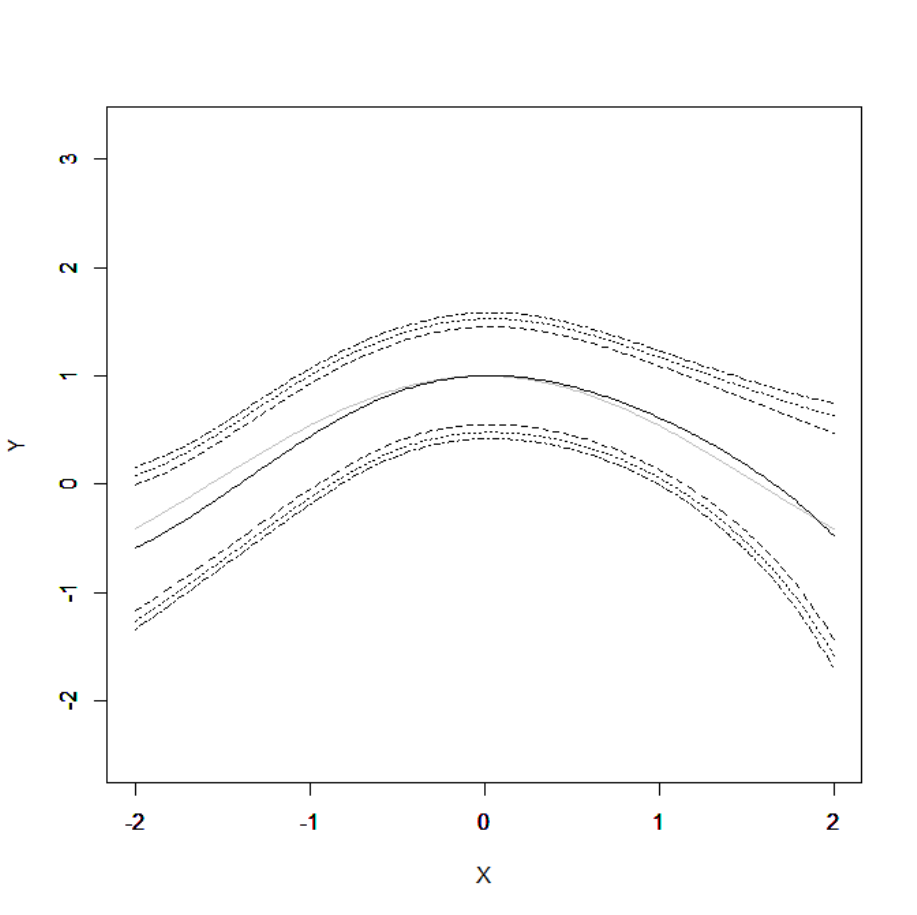}\end{minipage} &
		\begin{minipage}{0.3\textwidth}\includegraphics[width=1\textwidth]{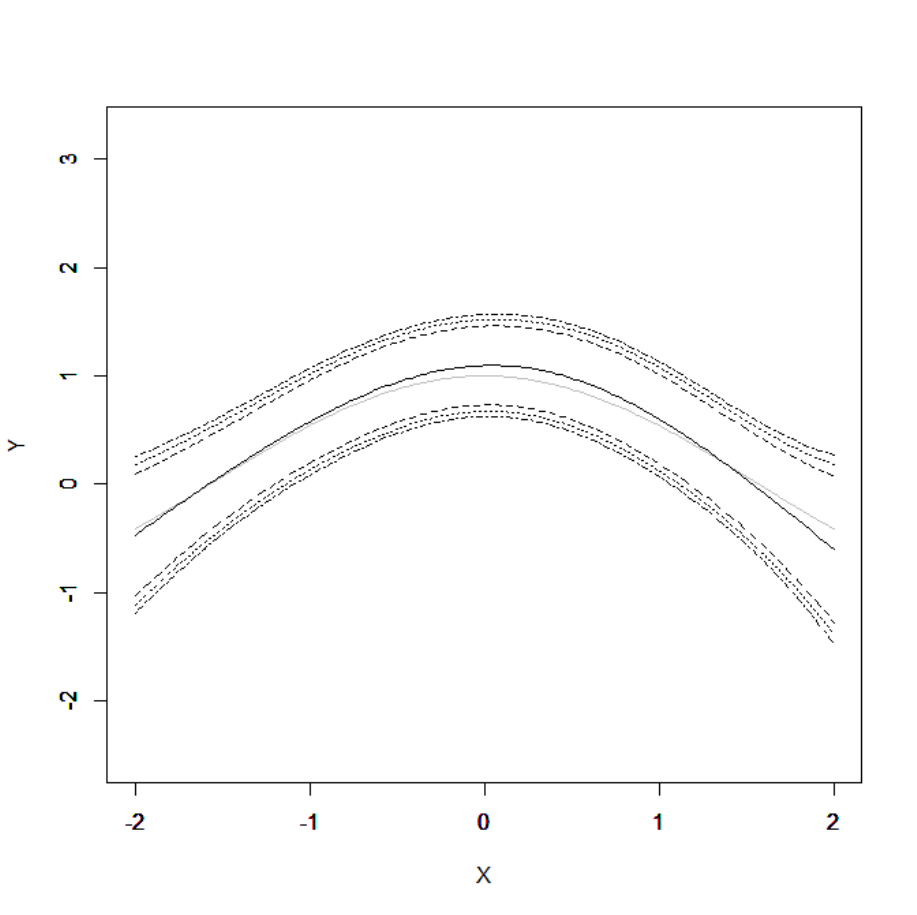}\end{minipage} \\
		\begin{minipage}{0.2\textwidth}$g(x)=\cos(x)$\bigskip\\Model 2\bigskip\\EV=1/3 (33\%)\end{minipage} &
		\begin{minipage}{0.3\textwidth}\includegraphics[width=1\textwidth]{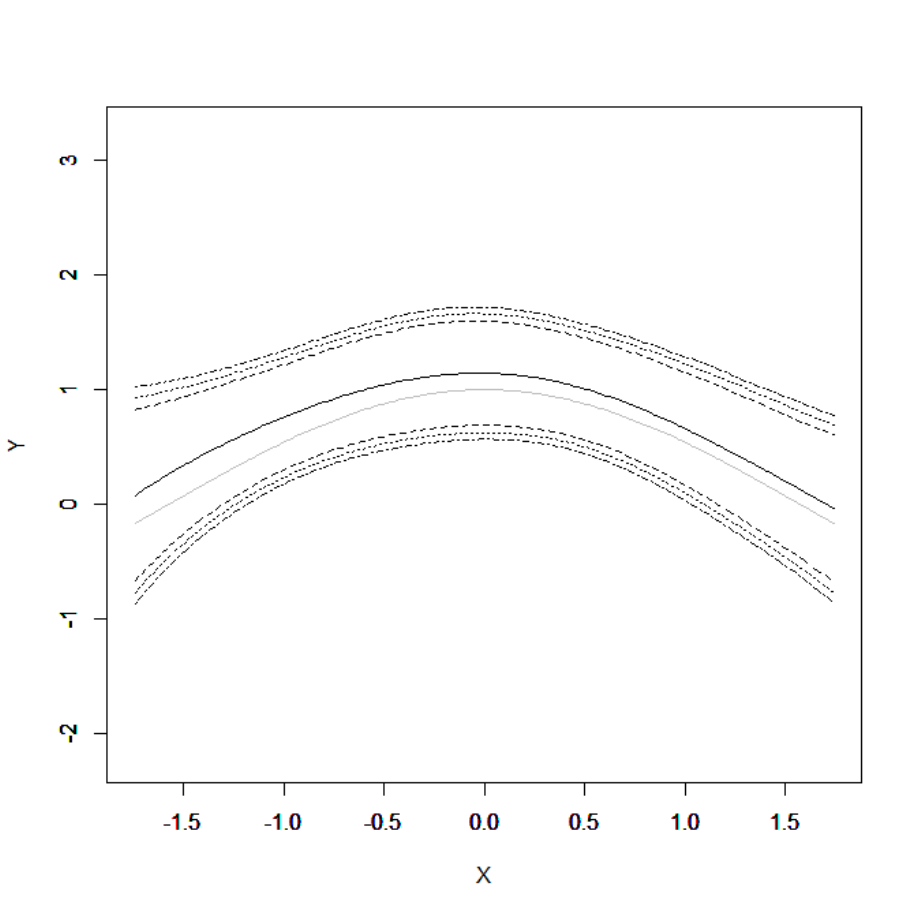}\end{minipage} &
		\begin{minipage}{0.3\textwidth}\includegraphics[width=1\textwidth]{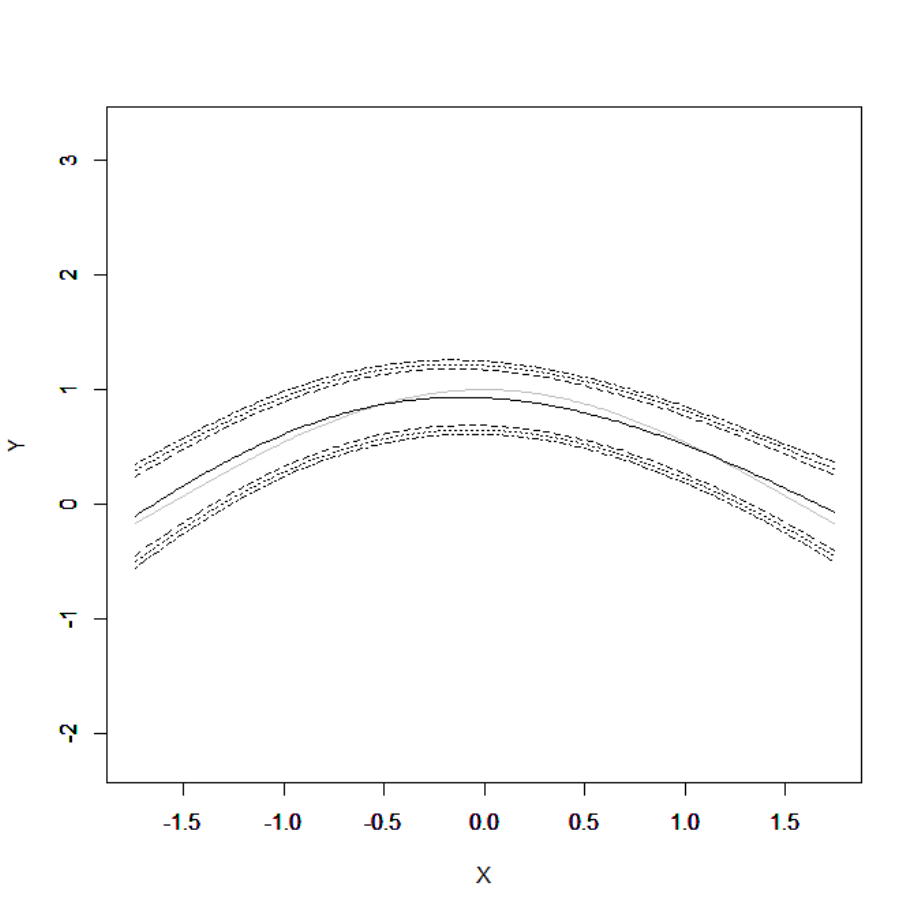}\end{minipage}
		\end{tabular}
	\caption{{\small Confidence bands for $g(x)=\cos(x)$ in Model 1 and Model 2 for error variance ratios of $1/4$ and $1/3$. Gray curves indicate the true function, black solid curves indicate estimates, and dashed curves indicate the 80\%, 90\%, and 95\% confidence bands.}}
	\label{fig:sim_model1_model2_cosx}
\end{figure}

\section{Additional details of real data analysis}
\label{sec: additional details}

\subsection{The background of the empirical study}

According to Centers for Disease Control and Prevention (CDC) of the US Department of Health and Human Services, more than one-third (36.5\%) of US adults have obesity (defined by body mass index or BMI $>30$) in the period between 2011 and 2014 \citep{OgCaFrFl15}.
The estimated annual medical cost of obesity in the United States was 147 billion 2008 U.S. dollars, with the medical costs for people who are obese being \$1,429 higher than those of normal weight \citep{FiTrCoDi09}.
While there is an extensive body of literature on cost estimation of obesity, it is a limitation that commonly used data sets contain only self-reported body measures, and hence the values of BMI generated from them are prone to biases \citep{BoBrMa02}.
More recently, \cite{CaMe12} use the instrumental variable approach to address this issue in cost estimation of obesity.
In the main text of this paper as well as in the following subsection, we employ our data combination approach to treat the self-reporting errors, and draw confidence bands for nonparametric regressions of medical costs on BMI.
We focus on costs measured by medical expenditures.
With this said, we note that there are also indirect costs of obesity which we do not account for, e.g., the costs of obesity are known to be passed on to obese workers with employer-sponsored health insurance in the form of lower cash wages and labor market discrimination against obese job seekers by insurance-providing employers \citep{BhBu09} -- see also \cite{Ca04}.
Details of the two data sets which we combine are as follows.

The National Health and Nutrition Examination Survey (NHANES) of CDC contains data of survey responses, medical examination results, and laboratory test results.
The survey responses include demographic characteristics, such as gender and age.
In addition to the demographic characteristics, the survey responses also contain self-reported body measures and self-reported health conditions.
Among the self reported body measures are height in inches and weight in pounds.
These two variables allow us to construct the BMI in lbs/in$^2$ as a generated variable.
We convert this unit into the metric unit (kg/m$^2$).
The NHANES also contains medical examination results, including clinically measured BMI in kg/m$^2$.
We treat the BMI constructed from the self-reported body measures as $W_j$, and the clinically measured BMI as $X_j$.
From the NHANES as a validation data set of size $m$, we can compute $\eta_j = W_j - X_j$ for each $j=1,\dots,m$.

The Panel Survey of Income Dynamics (PSID) is a longitudinal panel survey of American families conducted by the Survey Research Center at the University of Michigan.
This data set contains a long list of variables including demographic characteristics, socio-economic attributes, expenses, and health conditions, among others.
In particular, the PSID contains self-reported body measures of the household head, including height in inches and weight in pounds.
These two variables allow us to construct the body mass index (BMI) in lbs/in$^2$  as a generated variable.
Again, we convert this unit into the metric unit (kg/m$^2$).
The PSID also contains medical and prescription expenses.
We treat the BMI constructed from the self-reported body measures as $W_j$, and the medical and prescription expenses as $Y_j$.
We note that the information contained in the PSID are mostly at the household level, as opposed to the individual level, and thus $Y_j$ indicates the total medical and prescription expenses of household $j$.
To focus on the individual medical and prescription expenses rather than household expenses, we only consider the sub-sample of the households of single men with no dependent family, for which the total medical and prescription expenses of the household equal to the individual medical and prescription expenses of the household head.
Hence, the reported regression results concern these selected subpopulations.

After deleting observations with missing fields from the NHANES 2009-2010, we obtain the following sample sizes of these four subsamples: (a) $m=407$, (b) $m=435$, (c) $m=407$, and (d) $m=431$.
After deleting observations with missing fields from the PSID 2009 for prescription expenses as the dependent variable $Y$, we obtain the following sample sizes of these four subsamples: (a) $n=528$, (b) $n=243$, (c) $n=247$, and (d) $n=106$.

\subsection{Data sources}

The data sets for NHANES are available at:
	\begin{center}
	https://wwwn.cdc.gov/nchs/nhanes/Default.aspx
	\end{center}
Clinical measurements are constructed from ``Examination'' data, while self reports constructed from from ``Questionnaire'' data.
Unique keys are available for combining these two pieces of data sets.
The data set for PSID is available at:
	\begin{center}
	https://simba.isr.umich.edu/default.aspx
	\end{center}
We are also happy to provide cleaned data files for immediate use for the sample used for our paper upon request.

\subsection{Additional empirical results}

In the main text of the paper, we present empirical results of nonparametric regressions of prescription expenses on BMI.
We also ran  nonparametric regressions of total medical expenses on BMI.
After deleting observations with missing fields from the PSID 2009 for total medical expenses as the dependent variable $Y$, we obtain the following sample sizes of the four subsamples: (a) $n=413$, (b) $n=181$, (c) $n=180$, and (d) $n=64$.

Figure \ref{fig:application_medical} displays estimates and confidence bands for total medical expenses in 2009 US dollars as the dependent variable.
The estimates are indicated by solid black curves.
The areas shaded by gray-scaled colors indicate 80\%, 90\%, and 95\% confidence bands.
The four parts of the figure represent (a) men aged from 20 to 34, (b) men aged from 35 to 49, (c) men aged from 50 to 64, and (d) men aged 65 or above.
If we look at the 90\% confidence band for the group (c) of men aged from 50 to 64, annual average total medical expenses are
approximately \$0--\$15,561 if BMI $=20$,
approximately \$1,235--\$35,885 if BMI $=25$, and
approximately \$8,036--\$34,465 if BMI $=30$.

\begin{figure}
	\centering
	\begin{tabular}{cc}
		(a) Men Aged from 20 to 34 & (b) Men Aged from 35 to 49 \\
		\includegraphics[width=0.32\textwidth]{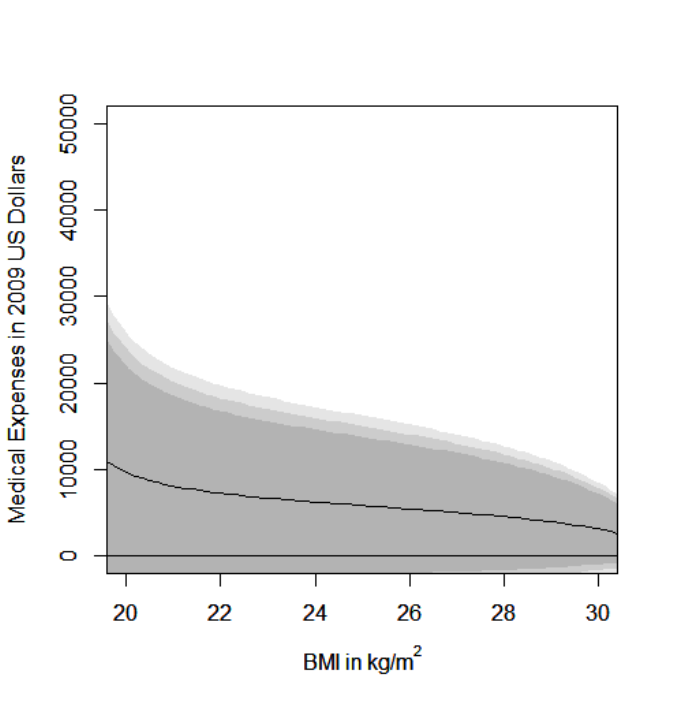} &
		\includegraphics[width=0.32\textwidth]{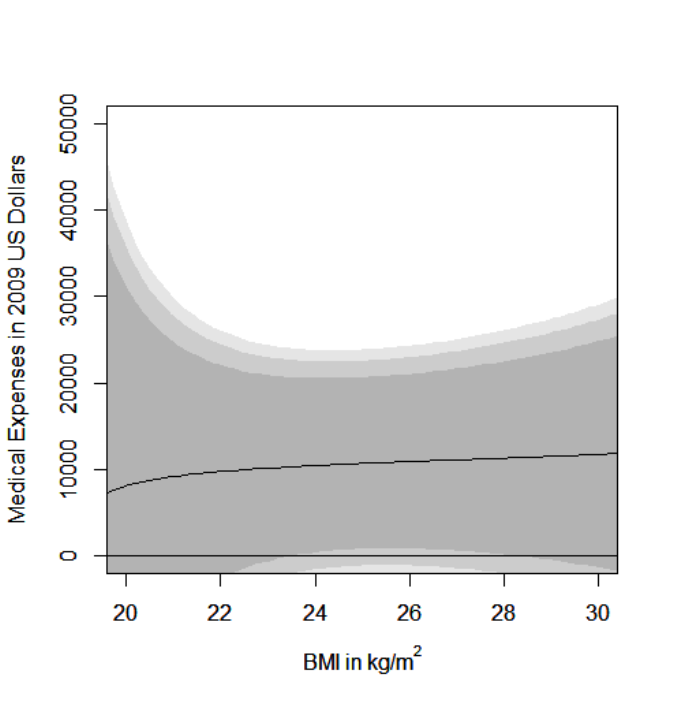} \\
		(c) Men Aged from 50 to 64 & (d) Men Aged 65 or Above \\
		\includegraphics[width=0.32\textwidth]{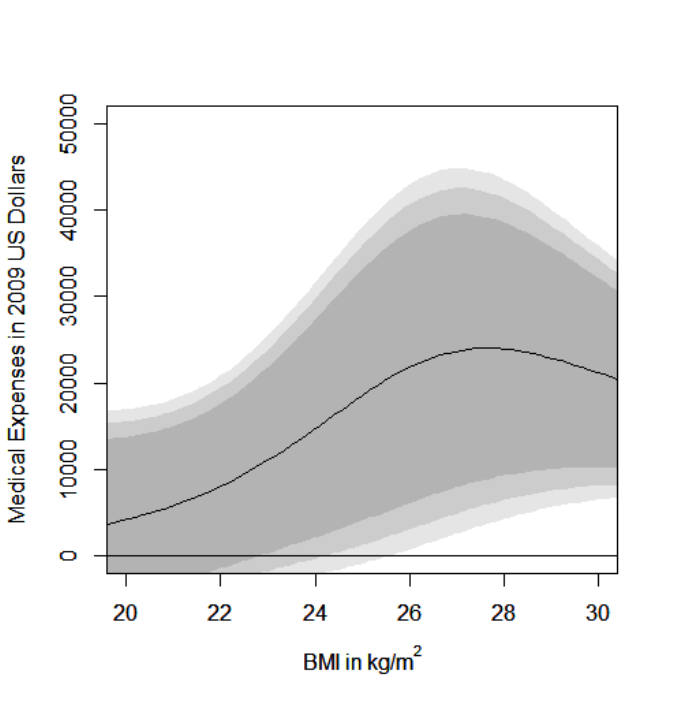} &
		\includegraphics[width=0.32\textwidth]{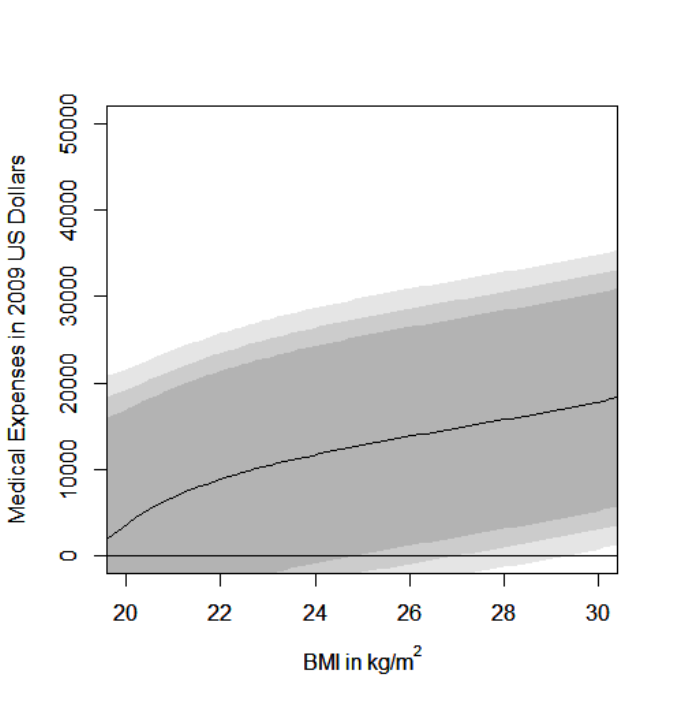} \\
	\end{tabular}
	\caption{{\small Estimates and confidence bands for the nonparametric regression of medical expenses on BMI for (a) men aged from 20 to 34, (b) men aged from 35 to 49, (c) men aged from 50 to 64, and (d) men aged 65 or above. The horizontal axes measure the BMI in kg/m$^2$. The vertical axes measure the medical expenses in 2009 US dollars. The estimates are indicated by solid black curves. The areas shaded by gray-scaled colors indicate 80\%, 90\%, and 95\% confidence bands.}}
	\label{fig:application_medical}
\end{figure}

\section*{Acknowledgments}
We would like to thank Tatsushi Oka and Holger Dette for useful comments and suggestions. 
We also would like to thank the Editor Yacine Ait-Sahalia, an AE, and two anonymous referees for their constructive comments that helped improve the quality of the paper.

\newpage

\end{document}